\setlist[enumerate]{leftmargin=.5in}
\setlist[itemize]{leftmargin=.5in}
\newcommand{\CC}{\ensuremath{\mathbb{C}}}
\newcommand{\RR}{\ensuremath{\mathbb{R}}}
\newcommand{\QQ}{\mathbb{Q}}
\newcommand{\ZZ}{\ensuremath{\mathbb{Z}}}
\newcommand{\imag}{\ensuremath{\mathfrak{i}}}
\newcommand{\init}{\ensuremath{\operatorname{init}}}
\newcommand{\newt}{\ensuremath{\operatorname{Newt}}}
\newcommand{\supp}{\ensuremath{\operatorname{supp}}}
\newcommand{\radjp}{\ensuremath{\check{\nabla}}}
\newcommand{\rowv}[1]{\ensuremath{\boldsymbol{#1}}}
\newcommand{\colv}[1]{\ensuremath{\vec{#1}}}
\newcommand{\dig}[1]{\ensuremath{\vec{#1}}}
\newcommand{\rinc}{\ensuremath{\check{Q}}}
\newcommand{\edges}{\ensuremath{\mathcal{E}}}
\newcommand{\nodes}{\ensuremath{\mathcal{V}}}
\newcommand{\neighbors}{\ensuremath{\mathcal{N}}}
\newcommand{\facets}{\mathcal{F}}
\newcommand{\inner}[2]{ \left\langle \, #1 \,,\, #2 \, \right\rangle }
\newcommand{\sinner}[2]{ \langle \, #1 \,,\, #2 \, \rangle }
\newcommand{\term}[1]{\textbf{#1}}
\newcommand{\subdiv}{\Sigma}
\newcommand{\seccone}{\mathcal{C}}
\newcommand{\aks}{\colv{f}}
\newcommand{\ake}{f}
\newcommand{\rake}{f^*}
\newcommand{\raks}{\colv{f}^*}
\newcommand{\im}{{\mathbf{i}}}
\newcommand{\badcoupling}{\mathcal{K}^\circ}
\DeclareMathOperator{\conv}{conv}
\DeclareMathOperator{\nvol}{Vol}
\DeclareMathOperator{\mvol}{MV}
\crefname{question}{Question}{Questions}
\title{
    On the typical and atypical solutions to the Kuramoto equations\thanks{%
        Submitted to the editors DATE.
        \funding{%
            TC and EK are supported by a grant from
            the Auburn University at Montgomery Research Grant-in-Aid Program
            and the National Science Foundation under Grant No. 1923099.
            TC and JL are supported by
            the National Science Foundation under
            Grant No. 2318837.
            EK is also supported by the Undergraduate Research Experience program
            funded by the Department of Mathematics at Auburn University at Montgomery.
        }
    }
}
\author{
    Tianran Chen\thanks{%
        Department of Mathematics,
        Auburn University at Montgomery, Montgomery, AL
        (\email{ti@nranchen.org})
    } \and
    Evgeniia Korchevskaia\thanks{%
        School of Mathematics, Georgia Institute of Technology
    } \and
    Julia Lindberg\thanks{%
   Department of Mathematics, University of Texas at Austin,  
    Austin, TX
    (\email{julia.lindberg@math.utexas.edu})}
}
\begin{document}

\maketitle

\begin{abstract}
    The Kuramoto model is a dynamical system that
    models the interaction of coupled oscillators.
    There has been much work to effectively bound the number of equilibria
    to the Kuramoto model for a given network.
    By formulating the Kuramoto equations as a system of algebraic equations,
    we first relate the complex root count of the Kuramoto equations to the
    combinatorics of the underlying network
    by showing that the complex root count is generically equal to
    the normalized volume of the corresponding adjacency polytope of the network. 
    We then give explicit algebraic conditions under which this bound is strict and show that there are
 networks where the Kuramoto equations have infinitely many equilibria.
\end{abstract}

\begin{keywords}
Kuramoto model, adjacency polytope, Bernshtein-Kushnirenko-Khovanskii bound
\end{keywords}
\begin{AMS}
14Q99,
65H10,
52B20
\end{AMS}

\section{Introduction}

The Kuramoto model \cite{Kuramoto1975Self} is a mathematical model
that describes the dynamics on networks of oscillators.
It has applications in neuroscience, biology, chemistry and power systems  
\cite{generative2010breakspear,synchronization2010dorfler,GUO2021106804,RODRIGUES20161}.
Despite its simplicity, it exhibits interesting emergent behaviors.
Of interest is the phenomenon of frequency synchronization
which is when the oscillators synchronize to a common frequency.
    Frequency synchronizations correspond to solutions of the 
    \emph{Kuramoto equations}
    \[
        \overline{w} =
        w_i - \sum_{j = 0}^n k_{ij} \sin(\theta_i - \theta_j) 
        \quad\text{for } i = 0,\dots,n, 
    \]
    in the unknowns $\theta_0,\ldots,\theta_n$.
    Here, $\overline{w}, w_0,\ldots,w_n,k_{ij}$ are network parameters.
This paper aims to understand the structure of these solutions
in ``typical'' and ``atypical'' networks.

Earlier work focused on the statistical analysis of
infinite networks \cite{Kuramoto1975Self} but more
recently, tools from differential and algebraic geometry
have enabled analysis of synchronizations on finite networks.
For a finite network, knowing the total number of synchronization configurations
is fundamental to understanding this model.
From a computational perspective,
this knowledge also plays a critical role in
developing numerical methods for finding synchronization configurations.
For instance, this number serves as a stopping criterion
for \emph{monodromy} algorithms
\cite{lindberg2020exploiting}
and allows for the development of specialized
\emph{homotopy} algorithms \cite{Chen2019Directed,ChenDavis2022Toric}
for finding all synchronization configurations.

In 1982, Ballieul and Byrnes introduced root counting techniques
from algebraic geometry to this field and showed that
a Kuramoto network of $N$ oscillators 
has at most $\binom{2N-2}{N-1}$ synchronization configurations 
\cite{BaillieulByrnes1982Geometric}.
It coincides with the bound on the root count
for the closely related load-flow equations
discovered by Li, Sauer, and Yorke \cite{LiSauerYorke1987Numerical}.
Algebraic geometers will recognize this bound
as the bi-homogeneous B\'ezout bound
for an algebraic version of the Kuramoto equations.
This upper bound can be reached when the network is complete
and complex roots 
are counted.
However, for sparse networks, 
the root count (even counting complex roots)
can be significantly lower than this upper bound \cite{GuoSalam1990Determining,MolzahnMehtaNiemerg2016Toward},
demonstrating the need for a network-dependent root count.

Guo and Salam initiated one of the first algebraic analyses
on such sparsity-dependent root counts \cite{GuoSalam1990Determining}.
Molzahn, Mehta, and Niemerg provided computational evidence
for the connection between this root count
and network topology \cite{MolzahnMehtaNiemerg2016Toward}.
In the special case of rank-one coupling coefficients,
Coss, Hauenstein, Hong and Molzahn
proved this complex root count to be $2^N - 2$,
which is also an asymptotically sharp bound on the real root count
\cite{coss2018locating}.
Chen, Davis and Mehta gave a sharp bound on the complex root count
for cycle networks of $N \binom{N-1}{ \lfloor (N-1) / 2 \rfloor }$ \cite{ChenDavisMehta2018Counting},
which is asymptotically smaller than the bi-homogeneous B\'ezout bound
discovered by Baillieul and Byrnes,
proving that sparse networks have significantly fewer synchronization configurations.
Interestingly, Lindberg, Zachariah, Boston, and Lesieutre showed that
this bound is attainable by real roots
\cite{LindbergZachariahBostonLesieutre2022Distribution}.
This bound is an instance of the 
``adjacency polytope bound'' \cite{Chen2019Unmixing},
which motivates the following.

\begin{question}\label{q1}
    For generic choices of network parameters,
    does the complex root count for the algebraic Kuramoto equations
    reach the adjacency polytope bound for all networks?
\end{question}
In the first part of this paper,
we provide a positive answer to this question
and thus establish the generic root count for the Kuramoto equations
derived from a graph $G$
to be the normalized volume of the adjacency polytope of $G$.
As a corollary, we show that the Kuramoto equations are Bernshtein-general.
We note that this result is similar to recent work in \cite{breiding2022the}
which shows that the number of (approximate) complex solutions to the
Duffing equations is generically the volume of the \emph{Oscillator polytope}.
We also extend this generic root count result to
variations of the Kuramoto equations,
including a special case of the power flow equations
from electric engineering.

While the complex root count for the algebraic Kuramoto equations
is generically constant and finite,
there may be network parameters
    that produce different root counts.
    First, we focus on the role played by the coupling coefficients
    in such exceptional situations.

\begin{question}\label{q2}
    What are conditions on the coupling coefficients under which
    the algebraic Kuramoto equations are \emph{not} Bernshtein-general?
\end{question}

In the second part of this paper,
we provide an explicit, combinatorial description
for such exceptional coupling coefficients.
In particular, we show that the set of exceptional coupling coefficients
can be characterized by ``balanced subnetworks''.

For the Kuramoto equations with exceptional coupling coefficients,
there are two possibilities.
Either all complex solutions remain isolated
but the total number drops below the generic root count,
or non-isolated solution components appear.
In the second case, there are infinitely many solutions,
forming curves, surfaces, or geometric structures of even higher dimension.

Ashwin, Bick, and Burylko analyzed
non-isolated solutions in complete networks of identical oscillators
\cite{AshwinBickBurylko2016Identical}.
A concrete example of a network of four identical oscillators
with uniform coupling coefficients was described in \cite[Example 2.1]{coss2018locating}.
Non-isolated solutions for cycle networks
was discovered by Lindberg, Zachariah, Boston and Lesieutre 
\cite{LindbergZachariahBostonLesieutre2022Distribution}.
Recent work by Sclosa shows that for every $d \geq 1$
there is a Kuramoto network whose stable equilibria
form a manifold of dimension $d$ \cite{sclosa2022kuramoto},
which, arguably, shows that the study of non-isolated solutions
deserves more serious attention.
This is further supported by the recent paper of
Harrington, Schenck, and Stillman \cite{harrington2023kuramoto},
which shows that
for any 2-connected graph that contains a 3-let,
there are parameters for which the Kuramoto equations
have a non-isolated solution set.
It is within this context, that the third part of this paper
aims to provide some explicit and constructive answers to the following.

\begin{question}\label{q3}
    What are the conditions on the network parameters under which
    the Kuramoto equations have infinitely many solutions?
\end{question}

The rest of this paper is structured as follows.
\Cref{sec: prelims} reviews concepts and notation
that will be used.
We then consider the generic root count of the Kuramoto equations in \Cref{sec: generic root count}
and provide a positive answer to \Cref{q1}.
Next, we turn our attention to non-generic coupling coefficients
in \Cref{sec: explicit genericity conditions}
and answer \Cref{q2} with a combinatorial description
of the exceptional coupling coefficients.
Using this description, in \Cref{sec: +dimensional}
we answer \Cref{q3} by identifying network parameters
where the Kuramoto equations have non-isolated solutions
and we construct explicit parameterizations for these solutions.
Finally, we conclude with a few remarks in \Cref{sec: conclusion}.

\section{Notation and preliminaries}\label{sec: prelims}

Column vectors, representing points of a lattice $L \cong \ZZ^n$,
are denoted by lowercase letters with an arrowhead, e.g., $\colv{a}$.
We use boldface letters, e.g., $\rowv{x}$,
for points in $\CC^n$, $\RR^n$, or $L^\vee \cong \ZZ^n$,
and they are written as row vectors.
For $\rowv{x} = (x_1, \dots, x_n)$
and $\colv{a} = (a_1,\dots,a_n)^\top \in \ZZ^n$,
$\rowv{x}^{\colv{a}} = x_1^{a_1} \, \cdots \, x_n^{a_n}$
is a \emph{Laurent monomial}
with the convention that $x_i^0 = 1$, for any $x_i$.
A linear combination of such monomials
$f = \sum_{\colv{a} \in S} \rowv{x}^{\colv{a}}$
is called a \emph{Laurent polynomial},
and its \emph{support} and \emph{Newton polytope} are denoted
$\supp(f) = S$ and $\newt(f) = \conv(S)$, respectively.
With respect to a vector $\rowv{v}$,
the \emph{initial form} of $f$ is
$\init_{\rowv{v}}(f)(\rowv{x}) := \sum_{\colv{a} \in (S)_{\rowv{v}}} c_{\colv{a}} \, \rowv{x}^{\colv{a}}$, 
where $(S)_{\rowv{v}}$ is the subset of $S$ on which
the linear functional $\inner{ \rowv{v} }{ \bullet }$ is minimized.
For an integer matrix $A = [ \colv{a}_1 \; \cdots \; \colv{a}_m]$,
$ \rowv{x}^A := ( \rowv{x}^{\colv{a}_1}, \dots, \rowv{x}^{\colv{a}_m} )$
defines a function on
the \emph{algebraic torus} $(\CC^*)^n = (\CC \setminus \{0\})^n$
whose group structure is given by 
$(x_1,\dots,x_n) \circ (y_1,\dots,y_n) := (x_1 y_1, \dots, x_n y_n)$.
We fix $G$ to be a connected graph
with vertex set $\nodes(G) = \{0,1,\dots,n\}$
and edge set $\edges(G)$.
For nodes $i$ and $j$ in $\nodes(G)$, $i \sim j$ indicates their adjacency.
Arrowheads will be used to distinguish digraphs from graphs,
e.g., $\dig{G}$ represents a digraph and $G$ an undirected graph.
\Cref{app: notations} has the full list of notation.

\subsection{The Kuramoto model}

A network of coupled oscillators can be naively thought of as
a swarm of points on the complex plane
pulling on one another with varying force
while circling around the origin.
It can be used to model a wide variety of
seemingly unrelated phenomena ranging from
the firing of neurons and the
rhythmic contractions of heart cells,
to the oscillations of concentrations of chemical compounds in a mixture.
In this paper, such a network is represented by
a connected graph $G$ whose nodes and edges
represent the oscillators and their connections, respectively.
Each oscillator has a natural frequency $w_i$
and along the edges in $G$, nonzero constants $K = \{ k_{ij} \}$
with $k_{ij} = k_{ji}$ quantify the coupling strength
between oscillators $i$ and $j$.
The data structure $(G,K,\colv{w})$ encoding this model
will simply be called a \term{network}.
The Kuramoto model describes the nonlinear interactions
among the oscillators by the differential equations
\begin{equation}\label{equ: kuramoto ode}
    \frac{d \theta_i}{dt} =
    w_i - \sum_{j \sim i} k_{ij} \sin(\theta_i - \theta_j) 
    \quad\text{for } i = 0,\dots,n, 
\end{equation}
where 
$\theta_i$ is the phase angle of the $i$-th oscillator \cite{Kuramoto1975Self}.
\emph{Frequency synchronization configurations}
are defined to be values of $(\theta_0,\dots,\theta_n)$ 
at which $\frac{d\theta_i}{dt}$ equals
$\overline{w} = \frac{1}{n} \sum_{i=0}^n w_i$.
By adopting a rotational frame,
we can assume $\theta_0 = 0$.
Since $k_{ij} = k_{ji}$,
we can also eliminate one equation.
Therefore, frequency synchronization configurations
are zeroes to the system of $n$ transcendental functions:
\begin{equation}\label{equ: kuramoto sin}
    (w_i - \overline{w}) - \sum_{j \sim i} k_{ij} \sin(\theta_i - \theta_j) 
    \quad\text{for } i = 1,\dots,n.
\end{equation}
The problem of counting synchronization configurations
is therefore a root counting problem.

\subsection{Algebraic Kuramoto equations}

To leverage the power of algebraic geometry,
the above transcendental system can be reformulated into an algebraic system
via the change of variables $x_i = e^{\imag \theta_i}$.
Then $\sin(\theta_i - \theta_j) = \frac{1}{2 \imag}(\frac{x_i}{x_j} - \frac{x_j}{x_i})$,
and~\eqref{equ: kuramoto sin} becomes
\begin{equation}\label{equ: algebraic kuramoto}
    \ake_{G,i}(x_1,\dots,x_n) = 
    \overline{w}_i - \sum_{j \sim i} 
        a_{ij}
        \left(
            \frac{x_i}{x_j} - \frac{x_j}{x_i}
        \right)
    \quad \text{for } i = 1,\dots,n,
\end{equation}
where $a_{ji} = a_{ij} = \frac{k_{ij}}{2\imag}$,
 $\overline{w}_i = w_i - \overline{w}$, and $x_0 = 1$.
The Laurent polynomial system
$\aks_G = (\ake_{G,1},\dots,\ake_{G,n})^\top$
will be called the \term{algebraic Kuramoto system},
and it captures all synchronization configurations in the sense that 
the real zeros to~\eqref{equ: kuramoto sin} correspond to the
complex zeros of~\eqref{equ: algebraic kuramoto} 
on the real torus $(S^1)^n$
(i.e., $|x_i| = |e^{\imag \theta}| = 1$).
Note that $\ake_G$ depends on $K = \{k_{ij} \}$ and $\colv{w}$,
and we will use the notation $\ake_{(G,K)}$ or $\ake_{(G,K,\colv{w})}$
when these dependencies are emphasized.

In much of this paper, we relax the root-counting problem
by considering all $\CC^*$-zeros of \eqref{equ: algebraic kuramoto}.
One important observation is that we can 
focus on graphs with no pendant (a.k.a. leaf) nodes.

\begin{lemma}\cite[Theorem 2.5.1]{juliathesis}\label{lem: leaf extension}
     Suppose $v \ne 0$ is a pendant node of $G$.
    Let $G' = G - \{ v \}$.
    Then any $\CC^*$-zero of $\aks_{G'}$
    extends to two distinct $\CC^*$-zeros for $\aks_G$.
\end{lemma}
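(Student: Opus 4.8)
The plan is to reduce the claim to counting roots of a single univariate quadratic in the new coordinate $x_v$. Let $u$ be the unique node of $G$ adjacent to the pendant node $v$. The one step that needs care is the choice of phase reference. Every coordinate of $\aks_G$ depends on $x_1,\dots,x_n$ only through the ratios $x_i/x_j$, and the $n+1$ Kuramoto equations sum to the identity $0$ (this is precisely what allows one equation to be dropped); consequently the $\CC^*$-solution set is unchanged if we normalize $x_u = 1$ in place of $x_0 = 1$, the two descriptions being interchanged by the invertible rescaling $\rowv{x}\mapsto \rowv{x}/x_u$, which commutes with deleting the $v$-coordinate. I would therefore coordinatize both $\aks_G$ and $\aks_{G'}$ with $u$ as the reference node. (Alternatively one may keep node $0$ as the reference: the case $u = 0$ is then immediate, and the case $u\neq 0$ only requires noting that deleting $v$ turns node $u$'s frequency offset $\overline{w}_u$ into $\overline{w}_u+\overline{w}_v$ --- which preserves the zero-sum normalization --- after which the computation below is the same.)

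With $u$ as the reference I would compare the two systems equation by equation. The equation indexed by $u$ is absent from both. For every node $i\notin\{u,v\}$ one has $\neighbors_G(i)=\neighbors_{G'}(i)$ and the variable $x_v$ does not appear in $\ake_{G,i}$, so $\ake_{G,i}$ and $\ake_{G',i}$ are the very same Laurent polynomial. Hence the only equation $G$ adds is the one indexed by $v$,
\[
    \ake_{G,v} \;=\; \overline{w}_v - a_{vu}\!\left( x_v - \tfrac{1}{x_v}\right)
\]
(with $x_u = 1$ substituted). Therefore a $\CC^*$-zero $\rowv{x}'$ of $\aks_{G'}$ extends to a $\CC^*$-zero of $\aks_G$ exactly by assigning $x_v$ a $\CC^*$-root of $\ake_{G,v}$, i.e.\ a root of $a_{vu}\,x_v^2 - \overline{w}_v\,x_v - a_{vu}=0$; conversely every $\CC^*$-zero of $\aks_G$ restricts to one of $\aks_{G'}$, so the correspondence is exactly two-to-one.

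Finally I would count the roots of that quadratic: its leading coefficient $a_{vu}=k_{vu}/(2\imag)$ and its constant term $-a_{vu}$ are both nonzero (as $k_{vu}\neq 0$), so the two roots have product $-1$ and in particular are both nonzero, hence both valid extensions. They are distinct unless the discriminant $\overline{w}_v^2 + 4a_{vu}^2 = \overline{w}_v^2 - k_{vu}^2$ vanishes, i.e.\ unless $\overline{w}_v = \pm k_{vu}$ --- a non-generic, codimension-one condition on the parameters that we exclude. So, off that locus, $\rowv{x}'$ extends to exactly two distinct $\CC^*$-zeros of $\aks_G$. The only genuinely non-routine part is the reference/normalization bookkeeping in the first paragraph: a careless term-by-term restriction with node $0$ fixed and $u\neq 0$ would force the $v$-equation and the modified $u$-equation to hold at once, spuriously forcing $\overline{w}_v=0$; rerooting at $u$ deletes node $u$'s equation and decouples the pendant, reducing everything to the self-contained quadratic.
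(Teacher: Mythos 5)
Your proof is correct; the paper gives no argument for this lemma (it is quoted from an external thesis), and your reduction --- re-rooting the phase reference at the neighbor $u$ (equivalently, absorbing $\overline{w}_v$ into $\overline{w}_u$, which preserves the zero-sum normalization) so that the only new constraint is the quadratic $a_{vu}x_v^2-\overline{w}_v x_v-a_{vu}=0$ with nonzero leading and constant terms --- is the standard one. Your discriminant remark is also worth keeping: the two extensions are distinct only when $\overline{w}_v^2\ne k_{vu}^2$, so the lemma as stated tacitly assumes this codimension-one condition fails to occur, which is harmless here since every invocation in the paper takes the natural frequencies generic (and the mere existence of extensions, used elsewhere, needs no such hypothesis).
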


\subsection{Kuramoto equations with phase delays}\label{sec: phase delays}
In models with phase delays, we may introduce parameters
$\{ \delta_{ij} \in \mathbb{R} \mid i \sim j \}$,
so that along an edge $\{i,j\}$,
oscillator $i$ responds not directly to the phase angle of oscillator $j$
but its delayed phase $\theta_j - \delta_{ij}$ \cite{yeung1999time}.
Then \eqref{equ: kuramoto sin} is generalized into:
\[
    0 = w_i - \overline{w} -
    \sum_{j \sim i} k_{ij}
    \sin(\theta_i - \theta_j + \delta_{ij}),
    \quad\text{for } i = 1,\ldots,n.
\]
Letting $C_{ij} = e^{\im \delta_{ij}}$, we can again make this system algebraic giving:
\begin{align}\label{equ: delayed Kuramoto}
    f_{G,i}(x_1,\ldots,x_n) =
    \overline{w}_i - \sum_{j \sim i} a_{ij} \left( \frac{x_i C_{ij}}{x_j} - \frac{x_j}{x_i C_{ij}} \right), \quad \text{for } i = 1,\ldots,n.
\end{align}
This system differs from \eqref{equ: algebraic kuramoto}
in the coefficients.
Yet, the same set of monomials are involved,
and as we will demonstrate,
the algebraic arguments we will develop
can be applied to this generalization.

\subsection{Power flow equations}\label{sec: power flow}

The \emph{PV power flow system} is one important variation
of the Kuramoto system.
In it, the graph $G$ models an electric power network
where $\nodes(G) = \{ 0, \ldots, n \}$ represent
buses in the power network.
An edge $\{i,j\} \in \edges(G)$, representing the connection between buses $i$ and $j$,
has a known complex admittance $g_{ij}' + \imag b_{ij}'$.
For each bus $i$, the relationship between its complex power injection $P_i + \imag Q_i$
and the complex voltages is
captured by the nonlinear equations
\begin{align}
    P_i &= \sum_{j \sim i} |V_i||V_j| (g_{ij}' \cos(\theta_i - \theta_j) + b_{ij}' \sin (\theta_i - \theta_j) ) \label{trigpfeqs1} \\
    Q_i &= \sum_{j \sim i} |V_i||V_j| (g_{ij}' \sin (\theta_i - \theta_j) - b_{ij}' \cos(\theta_i - \theta_j)) \label{trigpfeqs2}
\end{align}
where $|V_i|$ is the voltage magnitude at bus $i$
and $\theta_i$ is its phase angle.
We fix bus $0$ to be the \emph{slack bus} with $\theta_0 = 0$. For a complete treatment of the derivation of the power flow equations see \cite{grainger1994power}.

A node $i$ is a \emph{PV node} when
$Q_i$ and $\theta_i$ are unknown while $P_i$ and $|V_i|$ are known 
and it models a generator bus.
As above, with $x_i = e^{\imag \theta_i}$
we get the \term{PV algebraic power flow equations}
\begin{align}
   f_{G,i}(x_1,\ldots,x_n) &=
   P_i -  \sum_{j \sim i} g_{ij} \left(\frac{x_i}{x_j} + \frac{x_j}{x_i}\right)  + b_{ij}\left(\frac{x_i}{x_j} - \frac{x_j}{x_i}\right),
   \quad \text{for } i = 1,\ldots,n \label{eq:pfeqs}
\end{align}
where $b_{ij} = \frac{1}{2\imag}|V_i||V_j|b_{ij}'$ and $g_{ij} = \frac{1}{2}|V_i||V_j|g_{ij}'$.
When $g_{ij} = 0$, the corresponding power system is \emph{lossless}
and \eqref{eq:pfeqs} reduces to \eqref{equ: algebraic kuramoto}.
Otherwise, the system is \emph{lossy}
and \eqref{eq:pfeqs} differs from \eqref{equ: algebraic kuramoto}.
This is, again, a variation of the algebraic Kuramoto system \eqref{equ: algebraic kuramoto}
that involves the same monomials.

\subsection{BKK bound}

The root counting arguments in this paper revolve around the
Bernshtein-Kushnirenko-Khovanskii (BKK) bound,
especially Bernshtein's Second Theorem. 

\begin{theorem}[D. Bernshtein 1975~\cite{Bernshtein1975Number}]\label{thm:bernshtein-b}
    (A) For a square Laurent system $\colv{f} = (f_1,\dots,f_n)$,
    if for all nonzero vectors $\colv{v} \in \RR^n$,
    $\init_{\colv{v}}(\colv{f})$ has no $\CC^*$-zeros,
    then all $\CC^*$-zeros of $\colv{f}$ are isolated,
    and the total number, counting multiplicity,
    is the mixed volume $M = \operatorname{MV}(\newt(f_1),\ldots,\newt(f_n))$.
    
    (B) If $\init_{\colv{v}} (\colv{f})$ has a $\CC^*$-zero
    for some $\colv{v} \ne \colv{0}$,
    then the number of isolated $\CC^*$-zeros $\colv{f}$ has,
    counting multiplicity, is strictly less than $M$ if $M>0$.
\end{theorem}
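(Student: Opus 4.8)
This is the classical theorem of Bernshtein; for completeness I sketch the argument I would give. I would prove (A) by a polyhedral (coefficient) homotopy and deduce (B) from (A) together with a perturbation, with a toric compactification organizing both. Write $P_i = \newt(f_i)$, fix a complete fan $\Sigma$ in $\RR^n$ refining the normal fan of $P_1 + \cdots + P_n$, and let $X_\Sigma \supset (\CC^*)^n$ be the associated complete toric variety, whose boundary is the union of the orbit closures $\overline{O(\tau)}$ over the cones $\tau \in \Sigma$ of positive dimension. The one structural fact I would import is the standard dictionary between boundary restrictions and initial forms: $\bigcap_i \overline{V(f_i)}$ meets the orbit $O(\tau)$ in $X_\Sigma$ if and only if $\init_{\colv{v}}(\colv{f})$ has a $\CC^*$-zero, where $\colv{v}$ is any vector in the relative interior of $\tau$. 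Thus the hypothesis of (A) says precisely that $\bigcap_i \overline{V(f_i)} \subset (\CC^*)^n$.

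\emph{Part (A).} I would first pin down the count $M$ for one conveniently deformed system. Choose generic real liftings $\omega_i$ of the supports $\supp(f_i)$ and form the homotopy $H(\colv{x}, t)$ whose $i$-th coordinate is $\sum_{\colv{a} \in \supp(f_i)} c_{i,\colv{a}}\, t^{\omega_i(\colv{a})}\, \colv{x}^{\colv{a}}$, so that $H(\cdot, 1) = \colv{f}$. The liftings induce a fine mixed subdivision of $P_1 + \cdots + P_n$; the cells that support $\CC^*$-solutions are the mixed cells, whose normalized volumes sum to $M$, and for each such cell $\sigma$ a substitution $\colv{x} = \colv{y}\, t^{\colv{\alpha}}$ degenerates $H$ as $t \to 0^+$ to a binomial system with exactly $\nvol(\sigma)$ nonsingular $\CC^*$-solutions. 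Hence $H$ has exactly $M$ solution branches, each a Puiseux series in $t$. The crux — the only place the hypothesis enters — is to show every branch stays in $(\CC^*)^n$ for $t \in (0,1]$ and converges to a $\CC^*$-point as $t \to 1$: if some branch $\colv{x}(t)$ limited onto a boundary orbit of $X_\Sigma$, extracting its leading Puiseux coefficients along the relevant direction $\colv{v}$ would produce a $\CC^*$-zero of $\init_{\colv{v}}(\colv{f})$, contradicting the assumption. Therefore all $M$ branches terminate inside $(\CC^*)^n$; their endpoints are the $\CC^*$-zeros of $\colv{f}$, they carry total multiplicity $M$, and they are finite in number, hence isolated. (Equivalently: since $\bigcap_i \overline{V(f_i)} \subset (\CC^*)^n$, a positive-dimensional component would, being affine yet having complete closure in $X_\Sigma$, acquire points on the toric boundary — impossible — so the intersection is $0$-dimensional, and the intersection number $\overline{V(f_1)} \cdots \overline{V(f_n)} = M$ is realized entirely by isolated $\CC^*$-zeros counted with multiplicity.)

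\emph{Part (B).} Assume $M > 0$ and that $\init_{\colv{v}}(\colv{f})$ has a $\CC^*$-zero for some $\colv{v} \neq \colv{0}$. Let $\colv{g}$ be generic with $\supp(g_i) = \supp(f_i)$ and put $\colv{f}_\epsilon := \colv{f} + \epsilon\, \colv{g}$; for all but finitely many $\epsilon$ the system $\colv{f}_\epsilon$ satisfies the hypothesis of (A), hence has exactly $M$ isolated $\CC^*$-zeros. By the dictionary above, the facial $\CC^*$-zero of $\init_{\colv{v}}(\colv{f})$ forces $\bigcap_i \overline{V(f_i)}$ to meet the boundary orbit $O(\tau)$ with $\colv{v} \in \operatorname{relint}(\tau)$; consequently, as $\epsilon \to 0$ at least one of the $M$ solution branches of $\colv{f}_\epsilon$ converges into that boundary orbit and is lost from the torus. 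Since the total intersection number $\overline{V(f_1)} \cdots \overline{V(f_n)} = M$ splits as a sum of nonnegative contributions over $(\CC^*)^n$ and over the boundary, with the latter strictly positive, the number of isolated $\CC^*$-zeros of $\colv{f}$, counted with multiplicity, is strictly less than $M$.

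\emph{Main obstacle.} Both halves hinge on controlling the toric boundary. In (A) the work is the properness of the homotopy — ruling out runaway Puiseux branches — and this is exactly where ``no $\init_{\colv{v}}(\colv{f})$ has a $\CC^*$-zero'' is used. In (B) the delicate point is upgrading the mere existence of a facial $\CC^*$-zero to a strictly positive boundary contribution: if $\bigcap_i \overline{V(f_i)}$ meets the boundary in a positive-dimensional set the relevant intersection is improper, which I would circumvent by first replacing $\colv{f}$ with a generic same-Newton-polytope perturbation supported near that set before counting, invoking persistence of local intersection numbers around a compact component.
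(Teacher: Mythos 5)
The paper does not actually prove this statement: it is quoted as Bernshtein's 1975 theorem with a citation to \cite{Bernshtein1975Number}, so there is no in-paper argument to compare yours against. Judged on its own, your sketch of part (A) is the standard and essentially correct route: the dictionary between facial $\CC^*$-zeros and intersections with torus orbits turns the hypothesis into the statement $\bigcap_i \overline{V(f_i)} \subset (\CC^*)^n$; compactness of this intersection inside the affine torus forces it to be finite, and the intersection number $M$ is then distributed over the isolated torus points with their local multiplicities. (Your parenthetical compactness argument is what actually carries the proof; the polyhedral homotopy is a constructive overlay, and the claim that the generic-$t$ system has \emph{exactly} $M$ torus solutions already leans on the intersection-theoretic upper bound. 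One should also refine $\Sigma$ to a smooth fan, or otherwise justify computing $M$ as an intersection product on a possibly singular $X_\Sigma$.)

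Part (B), however, has a genuine gap at exactly the point you flag and then wave away. From ``$\bigcap_i \overline{V(f_i)}$ meets a boundary orbit'' you conclude ``the boundary contribution to $M$ is strictly positive,'' but that implication \emph{is} the hard content of Bernshtein's second theorem and does not follow from general intersection theory: an improper (excess) intersection component can a priori contribute $0$ to an intersection number. What saves the argument here is that each $\overline{V(f_i)}$ moves in a basepoint-free linear system on $X_\Sigma$ (since $\Sigma$ refines the normal fan of each $P_i$), and for intersections of $n$ divisors from globally generated line bundles on a complete $n$-fold, Fulton's refined B\'ezout theorem guarantees that every connected component of the intersection --- in particular a component meeting the toric boundary, which is necessarily disjoint from the isolated torus zeros --- contributes at least $1$ to $M$. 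Alternatively, Bernshtein's original proof constructs from the facial $\CC^*$-zero an explicit Puiseux branch of solutions of the deformed system escaping to toric infinity in the direction $\colv{v}$, which removes at least one of the $M$ roots from the torus count. Your proposed remedy (perturbing near the boundary component and invoking ``persistence of local intersection numbers'') is too vague to substitute for either of these; as written it presupposes the positivity it is meant to establish.
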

A system for which condition (A) holds
is said to be \emph{Bernshtein-general}.
Only the special case of identical Newton polytopes,
i.e., when $\newt(f_1),\ldots,\newt(f_n)$ are all identical,
will be used.
This specialized version strengthens
Kushnirenko's Theorem \cite{Kushnirenko1975Newton}.

\subsection{Randomized algebraic Kuramoto system}\label{sec: randomized}

The analysis of the algebraic Kuramoto system~\eqref{equ: algebraic kuramoto}
can be further simplified through ``randomization''.
For any nonsingular square matrix $R$,
the systems $\aks_G$ and $\raks_G := R \, \aks_G$ have the same zero set.
With a generic choice of $R$,
there will be no complete cancellation of terms, 
and $\raks_G$ will be referred to as the \term{randomized (algebraic) Kuramoto system}.
This system is \emph{unmixed} in the sense that
$\rake_{G,1},\ldots,\rake_{G,n}$ have identical supports
since they involve the same set of monomials, namely,
constant terms and
$\{ x_i x_j^{-1}$ , $x_j x_i^{-1} \}_{\{i,j\} \in \edges(G)}$.

The randomization $\aks_G \mapsto \raks_G$ does not alter the zero set
but makes a very helpful change to the tropical structure:
There is a mapping between the
graph-theoretical features of $G$
and the tropical structures of $\raks_G$
through which we can gain key insight into the structure
of the zeros of $\raks_G$.

\subsection{Adjacency polytopes}

For each $i=1,\ldots,n$, $\supp(\rake_{G,i})$ are all identical
and given by
\[
    \radjp_G :=
    \{ 
        \pm (\colv{e}_i - \colv{e}_j) \mid 
        i \sim j
    \}
    \;\cup\;
    \{ \colv{0} \},
\]
where $\colv{e}_i$ is the $i$-th standard basis vector of $\RR^n$ for $i=1,\ldots,n$,
and $\colv{e}_0 = \colv{0}$.
The $\conv(\radjp_G)$ is the
\emph{adjacency polytope} or \emph{symmetric edge polytope} of $G$
\cite{Chen2019Unmixing,ChenDavisMehta2018Counting,ChenMehta2017Network,MatsuiHigashitaniNagazawaOhsugiHibi2011Roots},
and is related to \emph{root polytopes} \cite{Postnikov2009Permutohedra}.
It has appeared in number theory and discrete geometry
(see the overview provided in \cite{DAliDelucchiMichalek2022Many}).

We will not distinguish $\radjp_G$ from $\conv(\radjp_G)$,
e.g., a ``face'' of $\radjp_G$ refers to a subset $F \subseteq \radjp_G$
such that $\conv(F)$ is a face of $\conv(\radjp_G)$.
The facets and boundary of $\radjp_G$ are denoted
$\facets(\radjp_G)$ and $\partial \radjp_G$, respectively.
$\radjp_G$ is centrally symmetric,
and both $\radjp_G$ and $\partial \radjp_G$ have unimodular triangulations.

By Kushnirenko's Theorem \cite{Kushnirenko1975Newton},
$\nvol(\radjp_G)$ is an upper bound for the $\CC^*$-root count
for $\raks_G$ and $\aks_G$
and the real root count to \eqref{equ: kuramoto sin}.
This is the \emph{adjacency polytope bound}
\cite{Chen2019Unmixing,ChenDavisMehta2018Counting}.

\subsection{Faces and face subgraphs}

There is an intimate connection between faces of $\radjp_G$
and subgraphs of $G$.
Since $\colv{0}$ is an interior point of $\radjp_G$,
every vertex of a proper face $F$
of $\radjp_G$
is of the form $\colv{e}_i - \colv{e}_j$ for some $\{i,j\} \in \edges(G)$.
Thus, it is natural to consider the corresponding
\emph{facial subgraph} $G_F$ and 
\emph{facial subdigraph} $\dig{G}_F$ involving a subsets of nodes
and edges sets
\begin{align*}
    \edges(G_F)       &= \{ \{i,j\} \mid \colv{e}_i - \colv{e}_j \in F \text{ or } \colv{e}_j - \colv{e}_i \in F \} 
    \;\text{and} \\
    \edges(\dig{G}_F) &= \{  (i,j)  \mid \colv{e}_i - \colv{e}_j \in F \}.
\end{align*}
As defined in \cite{Chen2019Directed,DAliDelucchiMichalek2022Many}, for $F \in \facets(\radjp_G)$,
$G_F$ is called a \emph{facet subgraph}
and $\dig{G}_F$ is a \emph{facet subdigraph}.
Higashitani, Jochemko, and Micha\l{}ek
provided a topological classification of face subgraphs
\cite[Theorem 3.1]{HigashitaniJochemkoMichalek2019Arithmetic}
and it was later reinterpreted \cite[Theorem 3]{ChenDavisKorchevskaia2022Facets}.
We state the latter here.

\begin{theorem}[Theorem 3 \cite{ChenDavisKorchevskaia2022Facets}]\label{thm: faces are max bipartite}
    Let $H$ be a nontrivial connected subgraph of $G$.
    \begin{enumerate}
        \item $H$ is a face  subgraph of $G$ if and only if it is a maximal bipartite subgraph of $G[\nodes(H)]$.
        \item $H$ is a facet subgraph of $G$ if and only if it is a maximal bipartite subgraph of $G$.
    \end{enumerate}
\end{theorem}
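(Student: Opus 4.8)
\emph{Proof idea.} The plan is to dualize: identify each proper face of $\conv(\radjp_G)$ with a linear functional and read the face subgraph off that functional's level sets. Every proper face is $\operatorname{face}_{\rowv w}(\radjp_G) := \conv\{\, x \in \radjp_G : \sinner{\rowv w}{x} = M \,\}$ for some $\rowv w \in \RR^n \setminus \{\colv 0\}$, where $M := \max_{x \in \radjp_G} \sinner{\rowv w}{x}$. Adopting the bookkeeping convention $w_0 := 0$ (consistent with $\colv e_0 = \colv 0$), we get $\sinner{\rowv w}{\colv e_i - \colv e_j} = w_i - w_j$ and $M = \max_{\{i,j\} \in \edges(G)} |w_i - w_j|$, which is $> 0$ since $G$ is connected and $\rowv w \ne \colv 0$. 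Because $\colv 0$ has functional value $0 < M$, the face equals $\conv\{\, \colv e_i - \colv e_j : \{i,j\} \in \edges(G),\ w_i - w_j = M \,\}$, so its face subgraph has edge set exactly $\{\, \{i,j\} \in \edges(G) : |w_i - w_j| = M \,\}$: the edges on which $|w_i - w_j|$ is maximal. So I plan to construct and analyze $\rowv w$ accordingly.

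For part (1), forward direction, let $H = G_F$ be connected with $F = \operatorname{face}_{\rowv w}(\radjp_G)$. Fix $v_0 \in \nodes(H)$ and set $\ell(v) := (w_v - w_{v_0})/M$. Since every edge of $H$ changes the value of $w$ by $\pm M$, walking along $H$-paths shows $\ell$ is integer-valued on $\nodes(H)$ and flips parity across each edge of $H$; hence the parity of $\ell$ is a proper $2$-colouring and $H$ is bipartite. For maximality in $G[\nodes(H)]$: any $\{i,j\} \in \edges(G)$ with $i,j \in \nodes(H)$ but $\{i,j\} \notin \edges(H)$ has $w_i - w_j = M(\ell(i) - \ell(j))$, a multiple of $M$ of absolute value $< M$, so $\ell(i) = \ell(j)$ and $i,j$ share a colour; since a connected bipartite graph has a unique $2$-colouring, $H + \{i,j\}$ is not bipartite. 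Conversely, given a connected maximal bipartite subgraph $H$ of $G[\nodes(H)]$ with bipartition $A \sqcup B$, I would exhibit a functional: set $w_v = 1$ on $A$, $-1$ on $B$, and $0$ elsewhere if $0 \notin \nodes(H)$; otherwise name $A$ the part containing $0$ and set $w_v = 0$ on $A$, $2$ on $B$, and $1$ elsewhere. In either case a short case check shows $|w_i - w_j| = 2$ precisely on $\edges(H)$ --- edges of $G[\nodes(H)]$ outside $H$ join same-colour vertices by maximality and contribute $0$, while edges meeting $\nodes(G) \setminus \nodes(H)$ contribute at most $1$ --- so $M = 2$ and $G_{\operatorname{face}_{\rowv w}(\radjp_G)} = H$.

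For part (2), I would add a dimension count. For a connected bipartite $H$ realized by a functional as above, the vertices of $F := \operatorname{face}_{\rowv w}(\radjp_G)$ are in bijection with $\edges(H)$, and since $\colv 0 \notin \operatorname{aff}(F)$ for a proper face, $\dim F = \dim \operatorname{span}\{\colv e_i - \colv e_j : \{i,j\} \in \edges(H)\} - 1$. Working in $\RR^{\{0,1,\dots,n\}}$ (so that $\colv e_i - \colv e_j$ becomes a genuine difference of basis vectors) and then projecting off the $0$-th coordinate is injective on this span --- a vector in it is supported on $\nodes(H)$ with coordinate sum $0$, so if it also vanishes off coordinate $0$ it is $\colv 0$ --- so the span has dimension equal to the incidence-matrix rank $|\nodes(H)| - 1$ of the connected graph $H$; hence $\dim F = |\nodes(H)| - 2$. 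Since $\colv 0$ lies in the interior of $\conv(\radjp_G)$ it is $n$-dimensional, and facets are exactly the proper faces of dimension $n-1$, so $F$ is a facet iff $\nodes(H) = \nodes(G)$. Combining with part (1): a connected $H$ is a facet subgraph iff it is a spanning, maximal bipartite subgraph of $G[\nodes(H)] = G$. Finally, any connected maximal bipartite subgraph of $G$ is automatically spanning --- if some vertex were missing, connectedness of $G$ would supply an edge from $\nodes(G) \setminus \nodes(H)$ into $\nodes(H)$, and attaching it as a pendant edge preserves bipartiteness, contradicting maximality --- so ``spanning maximal bipartite subgraph of $G$'' collapses to ``maximal bipartite subgraph of $G$'', giving (2).

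I expect the main obstacle to be the dimension identity $\dim F = |\nodes(H)| - 2$: the rank of a connected graph's incidence matrix is standard, but one must track the degenerate coordinate $\colv e_0 = \colv 0$ with care to confirm that projecting it away costs no rank, whether or not $0 \in \nodes(H)$. Everything else reduces to elementary graph $2$-colouring together with routine bookkeeping around the special node $0$.
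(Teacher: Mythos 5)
This statement is imported by citation: the paper reproduces it as \cite[Theorem~3]{ChenDavisKorchevskaia2022Facets} (a reinterpretation of \cite[Theorem~3.1]{HigashitaniJochemkoMichalek2019Arithmetic}) and gives no proof of its own, so there is nothing in the text to compare against line by line. Your argument is a correct, self-contained derivation, and it is essentially the standard supporting-functional argument one finds in those references: read the face off a linear functional $\rowv{w}$ with $w_0=0$, observe that the face subgraph consists of the edges where $|w_i-w_j|$ attains its maximum $M>0$, and use the integrality of $\ell(v)=(w_v-w_{v_0})/M$ on the connected subgraph $H$ to get both bipartiteness (parity $2$-colouring) and edge-maximality (a non-face edge inside $\nodes(H)$ has $w_i-w_j$ an integer multiple of $M$ of modulus $<M$, hence zero). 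The converse construction ($w=\pm1$ on the two parts, or the $0/2/1$ variant when $0\in\nodes(H)$) is checked correctly, including the point that maximality forces same-part endpoints for the omitted induced edges. The two places that genuinely require care are both handled: (i) since $M>0$, the affine hull of $F$ misses $\colv{0}$, so $\dim F=\dim\operatorname{span}(F)-1$; and (ii) the degenerate convention $\colv{e}_0=\colv{0}$ costs no rank when passing from the true incidence matrix to $\rinc$, because vectors in the column span have coordinate sum zero, giving $\dim F=|\nodes(H)|-2$ and hence the facet criterion $\nodes(H)=\nodes(G)$. The final reduction (a connected edge-maximal bipartite subgraph of a connected $G$ is automatically spanning, via a pendant edge) is also right. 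I see no gap; if anything, you could note explicitly that the dimension formula depends only on $\edges(H)$, so every face with face subgraph $H$ has the same dimension, which is what makes the facet characterization in part (2) unambiguous.
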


Here, $G[V]$ is the subgraph induced by the subset $V \subset \nodes(G)$.
Multiple faces can correspond to the same facial subgraph.
The crisper parameterization is given by the correspondence
$F \mapsto \dig{G}_F$.
Reference \cite{ChenDavisKorchevskaia2022Facets} describes
a necessary balancing conditions for facial subdigraphs.

For a facial subdigraph $\dig{G}_F$, its \emph{reduced incidence matrix}
$\rinc(\dig{G}_F)$ is the matrix with columns $\colv{e}_i - \colv{e}_j$
for $(i,j) \in \edges(\dig{G}_F)$. 
Its null space can be interpreted as the space of circulations of $\dig{G}_F$.

For a subdigraph $\dig{H}$ of $\dig{G}$,
the \emph{coupling vector} $\rowv{k}(\dig{H})$ has entries 
$k_{ij}$ for $(i,j) \in \dig{H}$.
Similarly, the entries of $\rowv{a}(\dig{H})$ are
the complexified coupling coefficients $a_{ij} = \frac{k_{ij}}{2 \imag}$.
The ordering of the entries is arbitrary,
but when appearing in the same context with $\rinc(\dig{H})$,
consistent ordering is implied.

\subsection{Facial systems}

The vast literature on the facial structure of $\radjp_G$
gives us a shortcut to understanding the initial systems of $\raks_G$,
since
they have particularly simple descriptions
corresponding to proper faces of $\radjp_G$.
For any $0 \neq \rowv{v} \in \RR^n$,
the initial system $\init_{\rowv{v}}(\raks_G)$ is
\begin{equation}\label{equ: facial system}
    \init_{\rowv{v}}(\rake_{G,i})(\rowv{x}) =
    \sum_{ \colv{e}_j - \colv{e}_{j'} \in F } c_{i,j,j'} \, \rowv{x}^{\colv{e}_j - \colv{e}_{j'}} =
    \sum_{ (j,j') \in \edges(\dig{G}_F) } c_{i,j,j'} \, \rowv{x}^{\colv{e}_j - \colv{e}_{j'}}
    \quad\text{for } i = 1,\ldots,n,
\end{equation}
where $F$ is the face of $\radjp_G$ for which
$\rowv{v}$ is an inner normal vector.
We will make frequent use of this geometric interpretation and
therefore it is convenient to slightly abuse the notation and write
$\init_F(\raks_G) := \init_{\rowv{v}} (\raks_G)$.
It will be called a \emph{facial system} of $\raks_G$,
(or a \emph{facet system} if $F \in \facets(\radjp_G)$).

\section{Generic $\CC^*$-root count}\label{sec: generic root count}

In this section, we establish the $\CC^*$-root count for the algebraic Kuramoto system \eqref{equ: algebraic kuramoto}
for generic choices of real or complex parameters
$\{\overline{w}_i\}_{i \in \nodes(G)}$ 
and $\{ k_{ij} \}_{\{i,j\} \in \edges(G)}$.
Since $\RR$ is Zariski-dense in $\CC$, 
it is sufficient to consider generic choices of complex parameters.
We show that the generic $\CC^*$-root count is the adjacency polytope bound
$\nvol(\radjp_G)$. 
A corollary is that this system is
Bernshtein-general, despite the constraints on the coefficients.

There are three main obstacles.
First, in each polynomial the monomials $x_i/x_j$ and $x_j/x_i$ share the same coefficient $a_{ij}$.
Second, for any edge $\{i,j\} \in \edges(G)$,
the $i$-th and $j$-th polynomials have the terms
$a_{ij} (x_i x_j^{-1} - x_j x_i^{-1})$
and
$a_{ji} (x_j x_i^{-1} - x_i x_j^{-1})$,
respectively, which are negations of each other
since $a_{ij} = a_{ji}$.
Therefore, the allowed choices of coefficients consists of a
nowhere dense subset of (Lebesgue) measure 0
in the space of all possible complex coefficients.
    Finally, unless $G$ is the complete graph,
    the Newton polytopes of the algebraic Kuramoto system
    are not full-dimensional
    which prevents simpler arguments (e.g. \cite{Chen2018Equality}) from being applied.
We will show that despite these,
the maximum $\CC^*$-root count, given by the adjacency polytope bound is generically attained.

Before presenting the main theorem of this section,
we first establish a few technical results that will be used,
some of which are well known but are nonetheless included for completeness.

\begin{lemma}\label{lem: facet subdivision}
    The lifting function $\tilde{\omega} : \radjp_G \to \QQ$, given by
    \[
        \tilde{\omega}(\colv{a}) =
        \begin{cases}
            0 & \text{if } \colv{a} = \colv{0} \\
            1 & \text{otherwise}
        \end{cases}
    \]
    induces a regular subdivision
    $\subdiv_{\tilde{\omega}}(\radjp_G) =  \{ \colv{0} \cup F \mid F \in \facets(\radjp_G) \}$.
\end{lemma}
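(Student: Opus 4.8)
The plan is to verify that the lifting function $\tilde{\omega}$ is \emph{regular} (i.e., its lower hull induces a polyhedral subdivision) and that the resulting cells are exactly $\{\colv{0}\} \cup F$ for facets $F$ of $\radjp_G$. Recall that a lift $\tilde{\omega} : \radjp_G \to \QQ$ induces a regular subdivision whose cells are the projections of the lower faces of $\conv\{(\colv{a}, \tilde{\omega}(\colv{a})) : \colv{a} \in \radjp_G\} \subseteq \RR^{n+1}$; so the task reduces to identifying these lower faces. First I would set $P := \conv(\radjp_G)$ and observe that, since $\colv{0}$ is an interior point of $P$ (the point set is symmetric, $\radjp_G = -\radjp_G$, and $G$ is connected so the $\colv{e}_i - \colv{e}_j$ span $\RR^n$), every proper face of $P$ has its vertices among the nonzero points $\pm(\colv{e}_i - \colv{e}_j)$, and $\colv{0}$ itself lies in the relative interior.

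Next I would analyze the lifted point configuration $\widehat{\radjp}_G := \{(\colv{0},0)\} \cup \{(\colv{a},1) : \colv{0} \ne \colv{a} \in \radjp_G\}$. The key geometric picture: all nonzero lifted points sit at height $1$, and the single point $(\colv{0},0)$ sits strictly below them. I claim the lower faces are precisely the sets $\{(\colv{0},0)\} \cup \{(\colv{a},1) : \colv{a} \in F\}$ as $F$ ranges over facets of $P$. To see each such set is a lower face, take a facet $F$ of $P$ with inner normal $\rowv{v} \in \RR^n$ and supporting value $\inner{\rowv{v}}{\colv{a}} = -c < 0$ for $\colv{a} \in F$ (negative since $\colv{0}$ is interior, so $\inner{\rowv{v}}{\colv{0}} = 0 > -c$); then the affine functional $(\colv{x},h) \mapsto \inner{\rowv{v}}{\colv{x}} + c\,h$ vanishes on $(\colv{0},0)$ and on every $(\colv{a},1)$ with $\colv{a} \in F$, is positive on all other lifted points (because $\inner{\rowv{v}}{\colv{a}} > -c$ for $\colv{a} \in P \setminus F$, and strictly so on the other vertices), and by choosing the last coordinate of the normal positive this exhibits a genuine lower face. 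Conversely, any lower face must contain $(\colv{0},0)$: a lower facet has an inner normal with positive last coordinate, and among all lifted points $(\colv{0},0)$ is the unique minimizer of the last coordinate, hence it is never strictly excluded, and a short argument shows any maximal lower face indeed contains it. The remaining non-$\colv{0}$ vertices of a lower face, all at height $1$, must then project to a face of $P$ not containing $\colv{0}$, i.e.\ a proper face, and maximality forces this to be a facet. Finally, since each $\{\colv{0}\} \cup F$ has $\conv$ equal to a full-dimensional simplicial-cone-like pyramid over the facet $F$, these cells are the maximal cells of $\subdiv_\omega(\radjp_G)$, and they clearly cover $P$ and intersect in common faces (intersections correspond to pyramids over shared faces of facets of $P$), confirming it is a subdivision.

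The main obstacle I anticipate is the bookkeeping to rule out ``spurious'' lower faces — one must be careful that no lower face consists purely of height-$1$ points (which would project to a proper face of $P$ without $\colv{0}$, not of the claimed form) or fails to be captured by a facet of $P$. The clean way around this: argue that the lower hull of $\widehat{\radjp}_G$ is exactly $\operatorname{conv}(\widehat{\radjp}_G)$ minus its ``upper'' part, and since the only vertex below height $1$ is $(\colv{0},0)$, every lower facet is a pyramid with apex $(\colv{0},0)$ over a facet of $P$; a facet of $P$ has dimension $n-1$, so the pyramid has dimension $n$, matching $\dim P$, and there are exactly $|\facets(\radjp_G)|$ of them. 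One should also note explicitly (using $G$ connected) that $\operatorname{conv}(\radjp_G)$ is full-dimensional so the notion of facet and regular subdivision behaves as expected. This last dimension-counting step, together with the observation that the cells tile $P$, is what pins down the subdivision to be exactly $\{\colv{0} \cup F \mid F \in \facets(\radjp_G)\}$.
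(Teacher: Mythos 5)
Your proof is correct and is exactly the standard argument the paper alludes to when it dismisses this lemma as "a well known consequence of assigning a sufficiently small lifting value to an interior point": since $\colv{0}$ is interior and is the unique point lifted below the common height $1$, the lower hull is a pyramid fan with apex $(\colv{0},0)$ over the facets of $\conv(\radjp_G)$. One small imprecision: your opening claim that "any lower face must contain $(\colv{0},0)$" is false for non-maximal lower faces (e.g.\ $F\times\{1\}$ itself is a lower face for a suitable normal with small positive last coordinate), but you correctly restrict to \emph{maximal} lower faces and settle that case by the dimension count in your final paragraph, which is all the lemma needs.
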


This is a well known consequence of assigning a sufficiently small lifting value
to an interior point of a point configuration ($\colv{0}$ in this case).
The resulting regular subdivision may be too coarse to be useful in our discussions.
Indeed, it will not be a triangulation unless $G$ is has no even cycles
\cite{ChenDavis2022Toric,ChenDavisKorchevskaia2022Facets}.
It can be refined into a triangulation through perturbations on
the nonzero lifting values.

\begin{lemma}\label{lem: generic lifting}
    For generic but symmetric choices
    $\{ \delta_{ij} = \delta_{ji} \in \RR \mid \{i,j\} \in \edges(G) \}$
    that are sufficiently close to 0, the function
    $\omega : \radjp_G \to \QQ$, given by
    \[
        \omega(\colv{a}) =
        \begin{cases}
            0               & \text{if } \colv{a} = \colv{0} \\
            1 + \delta_{ij} & \text{if } \colv{a} = \colv{e}_i - \colv{e}_j
        \end{cases}
    \]
    induces a regular unimodular triangulation
    $\Delta_\omega = \subdiv_\omega(\radjp_G)$
    that is a refinement of $\subdiv_{\tilde{w}}(\radjp_G)$
    and its cells are in one-to-one correspondence with cells in
    a unimodular triangulation of \(\partial \radjp_G\).
    Indeed, each cell is of the form
    $0 \cup \Delta$ where $\Delta$ is a simplex in $\partial \radjp_G$.
\end{lemma}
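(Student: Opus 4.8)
The plan is to build the triangulation by first triangulating the boundary $\partial\radjp_G$ and then coning from the interior point $\colv{0}$. By \Cref{lem: facet subdivision}, the lifting $\tilde\omega$ produces the coarse subdivision $\subdiv_{\tilde\omega}(\radjp_G) = \{\colv{0}\cup F \mid F\in\facets(\radjp_G)\}$, so every maximal cell is already a cone over a facet of $\radjp_G$. The only obstruction to being a triangulation is that the facets $F$ themselves need not be simplices. First I would observe that the symmetric lifting $\omega$ in the statement restricts, on each facet $F$, to the lifting $\colv{a}\mapsto 1+\delta_{ij}$ (for $\colv{a}=\colv{e}_i-\colv{e}_j$), whose induced regular subdivision of $F$ depends only on the differences of the $\delta_{ij}$; since these are generic, the induced subdivision of $F$ is a regular triangulation. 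Taking the $\delta_{ij}$ sufficiently small guarantees that no lower face of $\radjp_G$ acquires a new lifted lower facet — i.e. the refinement does not ``cut across'' the coarse cells $\colv{0}\cup F$ — so $\Delta_\omega$ refines $\subdiv_{\tilde\omega}(\radjp_G)$ and each of its maximal cells has the form $\colv{0}\cup\Delta$ with $\Delta$ a simplex lying in some facet $F\in\facets(\radjp_G)$. This gives the claimed bijection between maximal cells of $\Delta_\omega$ and the simplices of an induced triangulation of $\partial\radjp_G$.

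The remaining point is unimodularity of each cell $\colv{0}\cup\Delta$. Since $\colv{0}$ is a vertex of this simplex, its normalized volume equals the absolute value of the determinant of the matrix whose columns are the vertices of $\Delta$, i.e. vectors of the form $\colv{e}_i-\colv{e}_j$ with $\{i,j\}\in\edges(G)$. Here I would invoke the standard fact that the vector configuration $\{\colv{e}_i-\colv{e}_j\}$ (a subset of the root system $A_n$) is \emph{totally unimodular}: every square submatrix of the incidence-type matrix $\rinc$ has determinant in $\{-1,0,1\}$. Equivalently, a linearly independent subset of $\{\pm(\colv{e}_i-\colv{e}_j)\}$ spans a unimodular sublattice of $\ZZ^n$, because such a set corresponds to a forest in $G$ and forests have unimodular incidence matrices. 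Hence $\dim\Delta = n-1$ forces the $n-1$ vertices of $\Delta$ together with $\colv{0}$ to form a $\ZZ$-basis of $\ZZ^n$, so $\colv{0}\cup\Delta$ is unimodular. Therefore $\Delta_\omega$ is a regular unimodular triangulation.

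The only genuinely delicate step is the quantifier ``generic but sufficiently close to $0$'': one must ensure simultaneously that (i) on each facet $F$ the restricted lifting is generic enough to triangulate $F$ — a nonempty Zariski-open condition on the $(\delta_{ij})$ — and (ii) the perturbation is small enough that the global lower hull over $\radjp_G$ still projects onto the cones $\colv{0}\cup F$ rather than developing facets that mix several of the original facets. Point (ii) is the usual ``don't destroy a coarser subdivision when refining'' argument: the coarse subdivision $\subdiv_{\tilde\omega}$ is induced by a lifting that is strictly convex across the cones $\colv{0}\cup F$, and strict convexity is an open condition, so adding $O(\delta)$ perturbations to the nonzero heights preserves it for $\delta$ small. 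I expect this to be the main obstacle to write carefully; the unimodularity and the coning argument are routine consequences of the total unimodularity of $A_n$ and of \Cref{lem: facet subdivision}.
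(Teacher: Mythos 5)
Your overall architecture is sound and close in spirit to the paper's: both arguments reduce to showing that every maximal cell is a cone $\colv{0}\cup\Delta$ over a simplex in a facet, and both get unimodularity from the total unimodularity of the incidence vectors $\colv{e}_i-\colv{e}_j$ (your ``forest'' determinant argument is the same as the paper's, modulo a slip: an $(n-1)$-dimensional $\Delta$ has $n$ vertices, not $n-1$). The organizational difference is that you first invoke the refinement of $\subdiv_{\tilde\omega}(\radjp_G)$ and then triangulate each facet, whereas the paper proves directly that $\colv{0}$ lies in every cell and that every cell is a simplex, and only afterwards deduces the refinement via a secondary-cone argument. Either order works.

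The one genuine gap is the sentence ``the induced regular subdivision of $F$ depends only on the differences of the $\delta_{ij}$; since these are generic, the induced subdivision of $F$ is a regular triangulation.'' This is exactly the point where the symmetry constraint $\delta_{ij}=\delta_{ji}$ could break the argument: the antipodal points $\colv{e}_i-\colv{e}_j$ and $\colv{e}_j-\colv{e}_i$ are assigned the \emph{same} lifting value $1+\delta_{ij}$, so the restriction of $\omega$ to a facet $F$ is a genuinely generic lifting only if $F$ never contains an antipodal pair. You must say why this holds --- e.g., if $\pm(\colv{e}_i-\colv{e}_j)\in F$ then their midpoint $\colv{0}$, an interior point of $\radjp_G$, would lie in the proper face $\conv(F)$, a contradiction. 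The paper spends the second paragraph of its proof establishing precisely this exclusion (by summing the two supporting-hyperplane equations to get $1+\delta_{ij}=h>0$ against $h\le 0$); without some version of it, ``generic $\delta_{ij}$'' does not imply ``generic restricted lifting,'' and the whole lemma (whose entire content is that a \emph{constrained} lifting still triangulates) is assumed rather than proved. The fix is one line, but it needs to be there. With that added, and with the refinement step (your point (ii)) written out — the paper does this by noting that the closed secondary cone of $\Delta_\omega$ is full-dimensional and contains $\tilde\omega$ when $\omega$ is close to $\tilde\omega$ — your proof is complete.
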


\begin{proof}
    We first show the interior point $\colv{0}$ is contained in every cell. 
    Fix a $C \in \Delta_\omega$ and
    let $(\rowv{v},1)$ be an inner normal vector
    of the lower facet of $\radjp_G^\omega$
    whose projection is $C$.
    Suppose $\colv{0} \not\in C$, 
    then $C$ contains an affinely independent set of $n+1$ points
    $\{ \colv{a}_0,\ldots,\colv{a}_n \} \not\ni \colv{0}$,
    and $\colv{v}$ satisfies the equation
    \[
        \begin{bmatrix}
            \colv{a}_1^\top - \colv{a}_0^\top \\
            \vdots \\
            \colv{a}_n^\top - \colv{a}_0^\top \\
        \end{bmatrix}
        \,
        \rowv{v}^\top
        =
        \begin{bmatrix}
            \omega(\colv{a}_0) - \omega(\colv{a}_1) \\
            \vdots \\
            \omega(\colv{a}_0) - \omega(\colv{a}_n)
        \end{bmatrix}.
    \]
    Let $B$ be the matrix on the left and $\colv{\beta}$ be the vector on the right,
    then $\rowv{v}^\top = B^{-1} \colv{\beta}$ and thus
    \[
        | \inner{ \rowv{v} }{ \colv{a} } | \le 
        \| \rowv{v}^\top \| \| \colv{a} \| \le
        \| \colv{a} \| \, \| B^{-1} \| \, \| \colv{\beta} \|
        \quad\text{for any } \colv{a} \in C.
    \]
    Note that entries of $\colv{\beta}$ are differences among the
    $\{ \delta_{ij} \}$.
    Therefore, for any $\epsilon > 0$, there is a $\delta$ such that
    $|\delta_{ij}| < \delta$ for all $\{i,j\} \in \edges(G)$
    implies $| \inner{ \rowv{v} }{ \colv{a} } | < \epsilon$,
    contradicting with the assumption that
    \[
        \inner{ \rowv{v} }{ \colv{a} } + \omega(\colv{a}) <
        \sinner{ \rowv{v} }{ \colv{0} } + \omega(\colv{0}) = 0.
    \]
    This shows that $\colv{0}$ must be contained in every cell.

    To show $\Delta_\omega$ is a triangulation,
    it is sufficient to show nonzero points in a cell $C \in \Delta_\omega$
    are assigned independent lifting values by $\omega$.
    If both $\pm (\colv{e}_i - \colv{e}_j) \in C$,
    for some $\{i,j\} \in \edges(G)$,
    then
    \[
        \inner{ \rowv{v} }{ \pm (\colv{e}_i - \colv{e}_j) } + 
        \omega( \pm (\colv{e}_i - \colv{e}_j) ) = h,
    \]
    where
    $h = \min \{
        \inner{ \rowv{v} }{ \colv{a} } + \omega(\colv{a})
        \mid \colv{a} \in \radjp_G
    \}$.
    Since $\omega(\pm (\colv{e}_i - \colv{e}_j)) = 1 + \delta_{ij}$,
    summing the two produces
    \[
        h = 1 + \delta_{ij} > 0,
    \]
    which contradicts the constraint that
    $0 = \sinner{\colv{v}}{\colv{0}} + \omega(\colv{0}) \ge h$
    for $\delta_{ij}$'s sufficiently close to 0.
    Therefore, if $\colv{e}_i - \colv{e}_j \in C$,
    then $\colv{e}_j - \colv{e}_i \not\in C$.
    Consequently, the nonzero points in $C$ are associated with
    independent and generic choices of lifting values,
    and thus $C$ must be a simplex.

    For unimodularity, note that, since
    $\colv{e}_i - \colv{e}_j \in C$ implies $\colv{e}_j - \colv{e}_i \not\in C$
    for $\{i,j\} \in \edges(G)$ and for a cell $C \in \Delta_\omega$,
    the nonzero points of $C$, as vectors, are exactly columns of (signed) incidence matrix of $G$,
    which is unimodular.
    Therefore, each cell of $\Delta_\omega$ is unimodular.
    
    Finally, to show that $\Delta_\omega$ is a refinement of
    $\subdiv_{\tilde{\omega}}(\radjp)$ from \Cref{lem: facet subdivision},
    we fix an arbitrary ordering of the points in $\radjp_G$
    and consider lifting functions $\omega$ and $\tilde{\omega}$,
    as vectors in $\RR^{|\radjp_G|}$.
    Let $\mathcal{C} = \seccone(\radjp_G,\Delta_\omega)$ be the (closed) secondary cone
    of $\Delta_\omega$ in $\radjp_G$.
    Then $\mathcal{C}$ is full-dimensional,
    since $\Delta_\omega$ is a triangulation.
    By assumption, $\omega$ is sufficiently close to $\tilde{\omega}$,
    so $\tilde{\omega} \in \mathcal{C}$,
    and $\Delta_\omega = \subdiv_\omega(\radjp_G)$
    equals or refines $\subdiv_{\tilde{\omega}}(\radjp_G)$.
\end{proof}

A lifting function \(\omega : \radjp_G \to \mathbb{Q}\)
for which \Cref{lem: generic lifting} holds will be called a
\term{generic symmetric lifting function} for $\radjp_G$.
We now turn our attention to its graph-theoretic consequences.

\begin{restatable}{lemma}{simplextree}\label{lem: simplex tree}
    For a generic symmetric lifting function $\omega$ for $\radjp_G$,
    let $\Delta$ be a simplex in $\Delta_\omega$.
    Then the digraph $\dig{G}_\Delta$
    is acyclic, and its underlying graph $G_\Delta$
    is a spanning tree of $G$.
\end{restatable}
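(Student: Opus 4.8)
The plan is to derive both conclusions from the structural facts about $\Delta_\omega$ already established in \Cref{lem: generic lifting}, together with the classical fact that the columns of the reduced incidence matrix of a graph are linearly independent precisely when the corresponding edges form a forest (equivalently, its column matroid is the graphic matroid).

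First I would fix a top-dimensional cell $\Delta$ of $\Delta_\omega$ (which is what ``a simplex in $\Delta_\omega$'' refers to). Since $\Delta_\omega$ triangulates the full-dimensional polytope $\radjp_G \subset \RR^n$, the cell $\Delta$ is an $n$-simplex whose $n+1$ vertices lie among the lattice points of $\radjp_G$. By \Cref{lem: generic lifting}, $\colv{0} \in \Delta$, and since $\Delta$ is unimodular it contains no lattice point besides its vertices; hence $\colv{0}$ is one of its $n+1$ vertices. The remaining $n$ vertices then have the form $\colv{v}_k = \colv{e}_{i_k} - \colv{e}_{j_k}$ with $\{i_k,j_k\} \in \edges(G)$, and since $\{\colv{0},\colv{v}_1,\dots,\colv{v}_n\}$ is affinely independent, $\colv{v}_1,\dots,\colv{v}_n$ are linearly independent. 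In particular no vector and its negative occur together among them, so these $n$ distinct vectors determine $n$ distinct edges of $G$, and the arcs of $\dig{G}_\Delta$ are exactly $(i_1,j_1),\dots,(i_n,j_n)$.

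Next I would view $\colv{v}_1,\dots,\colv{v}_n$ as the columns of the reduced incidence matrix $\rinc(\dig{G}_\Delta)$. By the matroid fact above, their independence means the edges of $G_\Delta$ form a forest; it has exactly $n$ edges, and its vertex set has at most $n+1$ elements since it is contained in $\nodes(G) = \{0,\dots,n\}$. As a forest has as many vertices as edges plus connected components, $G_\Delta$ has at least $n+1$ vertices, hence exactly $n+1$ vertices and exactly one component, i.e., it is a spanning tree of $G$. Acyclicity of $\dig{G}_\Delta$ is then immediate: a directed cycle through $m \ge 3$ distinct vertices would yield an undirected cycle in the tree $G_\Delta$, while a directed $2$-cycle $i \to j \to i$ would force both $\colv{e}_i - \colv{e}_j$ and $\colv{e}_j - \colv{e}_i$ to be among the $\colv{v}_k$, contradicting their linear independence.

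I do not expect a serious obstacle: the geometric work---that $\colv{0}$ lies in every cell and that the cells are unimodular simplices---is already carried out in \Cref{lem: generic lifting}, and what remains is the graphic-matroid characterization plus an edge-versus-vertex count. The one point worth stating carefully is that the $n$ selected lattice vectors correspond to $n$ \emph{distinct} edges of $G$, since it is precisely this (a forest with $n$ edges inside an $(n+1)$-vertex set) that forces $G_\Delta$ to be spanning; this in turn follows from the linear independence of $\colv{v}_1,\dots,\colv{v}_n$.
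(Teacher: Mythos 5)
Your proof is correct, and it reaches the conclusion by a genuinely different route from the paper's. The paper's proof argues directed acyclicity directly from the lifting values: it takes the inner normal $(\colv{\alpha},1)$ of the lower facet over $\Delta$, writes the equality $\inner{\colv{\alpha}}{\colv{e}_{i_k}-\colv{e}_{i_{k+1}}}+1+\delta_{i_k,i_{k+1}}=0$ along each arc of a hypothetical directed cycle, and sums around the cycle to get $m+\sum_k \delta_{i_k,i_{k+1}}=0$, impossible for $\delta_{ij}$ small; it then gets the spanning property from $\dim(\Delta)=n$. You instead use only the output of \Cref{lem: generic lifting} (that each cell is a unimodular $n$-simplex containing $\colv{0}$): linear independence of the $n$ nonzero vertices, read as columns of the reduced incidence matrix, gives via the graphic matroid that $G_\Delta$ is a forest with $n$ distinct edges on at most $n+1$ vertices, hence a spanning tree, and acyclicity of $\dig{G}_\Delta$ then falls out for free (any orientation of a tree without antiparallel arc pairs is acyclic). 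What your approach buys is that the lifting function never reappears in this lemma, and the ``tree'' half of the claim --- which the paper's proof leaves somewhat implicit, since it only explicitly rules out directed cycles and establishes spanning --- is made fully explicit through the forest-plus-edge-count argument. What the paper's approach buys is a self-contained direct argument against directed cycles that mirrors the style of the surrounding polyhedral computations and does not invoke the graphic-matroid characterization.
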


The proof of this lemma is nearly identical to the proof of
\cite[Theorem 1]{Chen2019Directed}
and we include an elementary proof in \Cref{sec: lemmas} for completeness.
Finally, we show a facet system derived from a spanning tree
has exactly one $\CC^*$-root, and it is nonsingular.

\begin{lemma}\label{lem: tree system}
    Suppose $\dig{T} < \dig{G}$ is a acyclic,
    and its underlying graph $T$ is a spanning tree of $G$,
    then, for generic $\overline{w}_1,\ldots,\overline{w}_n \in \CC^*$
    and \emph{any} choices of $a_{ij} \in \CC^*$,
    the system of $n$ Laurent polynomials
    \[
        \overline{w}_i
        - \sum_{j  :   i \rightsquigarrow j} a_{ij} \frac{x_i}{x_j}
        + \sum_{j  :   j \rightsquigarrow i} a_{ij} \frac{x_j}{x_i}
        \quad\text{for } i = 1,\ldots,n
    \]
    has a unique zero in $(\CC^*)^n$,
    and this zero is isolated and regular.
    Here, $j \rightsquigarrow i$ means $(j,i) \in \edges(\dig{T})$.
\end{lemma}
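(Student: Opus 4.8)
The plan is to induct on the number of vertices $n$, using the leaf structure of the spanning tree $T$ to peel off one equation at a time. Since $T$ is a spanning tree of $G$ on vertex set $\{0,1,\dots,n\}$, it has at least one leaf; because we have fixed $\theta_0=0$ (equivalently $x_0=1$), we may choose a leaf $v\neq 0$ of $T$. Let $u$ be its unique neighbor in $T$. The arc of $\dig{T}$ incident to $v$ is either $(v,u)$ or $(u,v)$; in the $i=v$ equation this contributes the single monomial term $\pm a_{uv}\,x_v^{\pm 1} x_u^{\mp 1}$, and this is the \emph{only} place $x_v$ appears in the whole system except possibly in the $i=u$ equation.

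**The peeling step.** The $v$-th equation reads $\overline{w}_v \mp a_{uv}\, x_v^{\pm1} x_u^{\mp 1} = 0$, which (for $\overline{w}_v\in\CC^*$, $a_{uv}\in\CC^*$) determines $x_v$ uniquely as a Laurent monomial in $x_u$ alone: $x_v = \lambda\, x_u$ for an explicit nonzero constant $\lambda = \lambda(\overline{w}_v,a_{uv})$, or $x_v = \lambda/x_u$ in the other orientation. Substituting this expression back into the $u$-th equation, the term involving the arc between $u$ and $v$ becomes a new constant; absorbing it into $\overline{w}_u$ produces a modified constant $\overline{w}_u'$, while all other equations are untouched. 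One then checks that the resulting system on $x_1,\dots,\widehat{x_v},\dots,x_n$ is again of the same form: it is indexed by the spanning tree $T' = T - v$ of the graph $G' = G - v$ (which is still connected since $v$ was a leaf), the digraph $\dig{T}' = \dig{T} - v$ is still acyclic, the off-diagonal coefficients $a_{ij}$ are unchanged hence still in $\CC^*$, and the new constant terms are $\overline{w}_i$ for $i\neq u$ and $\overline{w}_u'$ for $i = u$.

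**Handling the genericity bookkeeping.** The one subtlety — and the step I expect to be the main obstacle — is tracking the genericity hypothesis through the induction. We are told $\overline{w}_1,\dots,\overline{w}_n$ are generic; after one peeling step the new constant $\overline{w}_u' = \overline{w}_u - (\text{monomial in } \lambda)$ depends on $\overline{w}_v$ and $a_{uv}$, so it is not an independent generic parameter. What must be argued is that $\overline{w}_u'\in\CC^*$ generically: this is a single nonvanishing condition $\overline{w}_u \neq (\text{expression in }\overline{w}_v, a_{uv})$, which excludes only a measure-zero set in the space of admissible constants, and the remaining constants $\overline{w}_i$ ($i\neq u$) stay untouched and generic. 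So I would phrase the induction hypothesis as: \emph{for a nonempty Zariski-open set of constants $(\overline{w}_i)$ and every choice of $(a_{ij})\in(\CC^*)^{|\edges|}$, the system has a unique, isolated, regular $\CC^*$-zero}, and verify this open set is preserved. The base case $n=1$ is immediate: $T$ is the single edge $\{0,1\}$, the lone equation $\overline{w}_1 \mp a_{01} x_1^{\pm1} = 0$ has the unique solution $x_1 = (\pm\overline{w}_1/a_{01})^{\pm1}\in\CC^*$.

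**Uniqueness, isolatedness, regularity.** Uniqueness and the count follow because each peeling step is a bijection on $\CC^*$-zero sets: the solution of the ambient system is in explicit bijection with the solution of the reduced system via $x_v = \lambda x_u^{\pm1}$. For isolatedness and regularity, the cleanest route is to observe that the Newton polytopes of all $n$ equations equal the adjacency polytope $\radjp_T$, whose normalized volume is $1$ since $\dig{T}$ acyclic with spanning tree $T$ forces $\rinc(\dig{T})$ to be a unimodular $n\times n$ matrix (the signed incidence matrix of a spanning tree), so $\nvol(\radjp_T) = |\det \rinc(\dig{T})| = 1$; then Kushnirenko's theorem (or \Cref{thm:bernshtein-b}) gives exactly one $\CC^*$-zero counted with multiplicity, and since we have exhibited one solution it must be multiplicity one, hence isolated and regular. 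Alternatively, regularity can be read off directly: the Jacobian of the tree system, after ordering vertices so that leaves come first, is triangular up to the monomial substitutions, with nonzero diagonal entries $\pm a_{ij}x_i^{\pm1}x_j^{\mp1}\neq 0$ on $(\CC^*)^n$, so $\det J \neq 0$ at the solution.
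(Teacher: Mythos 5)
Your proof is correct in substance and takes essentially the same route as the paper: an induction that peels off a leaf $v\neq 0$ of the spanning tree, solves the resulting binomial leaf equation, and reduces to the tree $T-v$; the paper decouples the leaf by rescaling so that $x_u=1$, while you substitute $x_v$ into the $u$-th equation, and your treatment of the genericity bookkeeping is actually more careful than the paper's (which glosses over it). Two small slips worth fixing: first, in \emph{both} orientations of the arc the leaf equation forces $x_v=\lambda x_u$ for a nonzero constant $\lambda$ (setting $x_u/x_v$ equal to a constant still gives $x_v$ proportional to $x_u$, not to $x_u^{-1}$), and in fact the substituted constant is exactly $\overline{w}_u'=\overline{w}_u+\overline{w}_v$ independently of the $a_{ij}$, so your Zariski-open condition on the $\overline{w}_i$ alone works uniformly in the couplings, as you hoped; second, the Newton polytopes of the tree system are \emph{not} $\radjp_T$ (which contains both $\pm(\colv{e}_i-\colv{e}_j)$ and has normalized volume $2^n$) --- the correct statement is that the union of the supports is $\{\colv{0}\}\cup\{\colv{e}_i-\colv{e}_j \mid (i,j)\in\edges(\dig{T})\}$, a unimodular $n$-simplex of normalized volume $1$, and Kushnirenko's bound applied to that simplex gives the multiplicity-one conclusion you want.
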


\begin{proof}
    We prove this by induction on the number of vertices of $G$, $N$. Denote the above system as $\colv{f}_{\dig T}(x_0,\ldots,x_{N-1})$.
    The statement is  true for the case
    where $N=2$ and $| \nodes(G) | = | \nodes(T) | =2$.
    
     Assume the statement is true for any graph with $N$ nodes
    and consider the case $G$ has $N+1$ nodes. 
    By re-labeling, we can assume node $N$ is a leaf in $T$
    and is adjacent to node $N-1$.
    Since this system is homogeneous of degree 0,
    we can deviate from our convention that $x_0 = 1$
    and scale the homogeneous coordinates so that $x_{N-1} = 1$. 
    Then the above system is decomposed as the Laurent system
    $\colv{f}_{\dig T'}(x_0,\ldots,x_{N-1})$
    and the binomial
    $\overline{w}_{N} \pm a_{N,N-1} x_N^{\mp 1}$,
    where $\dig{T'} = \dig{T} - \{N\}$. 
   The induction hypothesis, that
   $\dig{f}_{\dig{T'}}(x_0,\ldots,x_N)$ has a unique isolated and regular
   thus completes the proof.
\end{proof}

With these technical preparations,
we now establish the main theorem of this section.

\begin{theorem}\label{thm: generic root count}
    For generic choices of real or complex constants
    $\overline{w}_1,\ldots,\overline{w}_n$
    and generic but symmetric choices of real or complex coupling coefficients
    $\{ k_{ij} = k_{ji} \ne 0 \mid \{i,j\} \in \edges(G) \}$,
    the $\CC^*$-zeros of the algebraic Kuramoto system
    \eqref{equ: algebraic kuramoto}
    are isolated and nonsingular,
    and the total number is $\nvol(\radjp_G)$.
\end{theorem}

This is a far generalization of 
\cite[Corollary 9 and Theorem 16]{ChenDavisMehta2018Counting} and 
\cite[Theorem 3.3.3]{GuoSalam1990Determining}
where the generic $\CC^*$-root count is established for tree\footnote{
   The generic number of complex equilibria for the Kuramoto model on tree networks is $2^n$.
    This fact appears to be well known among researchers in power systems
    long before the referenced paper \cite{ChenDavisMehta2018Counting}.
},
cycle, and complete networks only.

\begin{proof}
    By Sard's Theorem, for generic choices
    of $\overline{w}_1,\ldots,\overline{w}_n$, the $\CC^*$-zero set of $\aks_G$
    consists of isolated and regular points.
    Under this assumption, we only need to establish the $\CC^*$-root count.
    Moreover, since it is already known this root count is bounded by $\nvol(\radjp_G)$,
    it is sufficient to show that it is greater than or equal to this bound.
    We shall take a constructive approach
    through a specialized version of the polyhedral homotopy of Huber and Sturmfels
    \cite{HuberSturmfels1995Polyhedral}.

    Let $\omega : \radjp_G \to \QQ$ be a generic symmetric lifting function
    for $\radjp_G$. 
    We use the notation $\omega_{ij} = \omega(\colv{e}_i - \colv{e}_j) = \omega(\colv{e}_j - \colv{e}_i)$
    and define the function $\colv{h} = (h_1,\ldots,h_n) : (\CC^*)^n \times \CC \to \CC^n$,
    given by
    \[
        h_i (x_1,\ldots,x_n,t) =
        \overline{w}_i - \sum_{j \sim i}
        a_{ij} t^{\omega_{ij}}
        \left(
            \frac{x_i}{x_j} -
            \frac{x_j}{x_i}
        \right).
    \]
    Away from $t=0$, $\colv{h}$ is a parameterized version of the original
    system $\aks_G$ with coefficients being
    analytic functions of the parameter $t$.
    Note that since $\omega_{ij} = \omega_{ji}$, for any choice of
    $t \in \CC$, the system still satisfies the symmetry constraints on the coefficients.
    By the Parameter Homotopy Theorem \cite{MorganSommese1989Coefficient},
    for $t$ outside a proper Zariski closed (i.e., finite) set $Q \subset \CC$,
    the number of isolated $\CC^*$-zeros of  $\colv{h}(\,\cdot\,,t) =0$ is a constant
    which is also an upper bound for the isolated $\CC^*$-root count
    for $\colv{h}(\,\cdot\,,t)$ for all $t \in \CC$.
    Let $\eta$ be this generic $\CC^*$-root count.
    Since $\overline{w}_i$ and $a_{ij}$ are chosen generically,
    and $\colv{h}(\,\cdot\,,1) \equiv \aks_G(\;\cdot\;)$,
    we can conclude that  $t = 1$ is outside $Q$ and thus
    $\eta$ agrees with the generic $\CC^*$-root count of $\aks_G$
    that we aims to establish.
    That is, it is sufficient to show $\eta \ge \nvol(\radjp_G)$. 

    Taking a constructive approach,
    we now construct $\eta$ smooth curves (of one real-dimension),
    that will connect the $\CC^*$-zeros of $\aks_G$
    and the collection of $\CC^*$-zeros of the special systems described in \Cref{lem: tree system}.
    This connection allows us to count the number of curves.
    
    Along a ray $t : (0,1] \to \CC$ on the complex plane parameterized by
    $
        t(\tau) = \tau e^{i \theta}
    $
    for a choice of $\theta \in [0,2\pi)$ that avoids $Q$
    (i.e., $t(\tau) \not\in Q$ for all $\tau \in (0,1]$),
    the function $\colv{h}$ is given by
    \[
        h_i (x_1,\ldots,x_n,t) =
        h_i (x_1,\ldots,x_n, \tau e^{i \theta}) =
        \overline{w}_i - \sum_{j \sim i}
        a_{ij} e^{i \theta \omega_{ij}}
        \tau^{\omega_{ij}}
        \left(
            \frac{x_i}{x_j} -
            \frac{x_j}{x_i}
        \right),
    \]
    and $\colv{h}$  has $\eta$ isolated and regular $\CC^*$-zeros
    for all $\tau \in (0,1]$.
    By the principle of homotopy continuation,
    the zero set of $\colv{h}$ form $\eta$ smooth curves in $(\CC^*)^n \times (0,1]$
    smoothly parameterizable by $\tau$. 
    The problem is now reduced to counting these curves.

    Fixing such a curve $C$, the asymptotic behavior of $C$ as $\tau \to 0$,
    in a compactification of $(\CC^*)^n$ can be characterized by
    the solutions to an initial system of $\colv{h}$,
    as a system of Laurent polynomials in 
    $\CC\{\tau\}[x_1^{\pm 1}, \ldots, x_n^{\pm 1}]$.
    Stated in affine coordinates, by the Fundamental Theorem of Tropical Geometry,
    there exists a system of convergent Puiseux series
    $x_1(\tau),\ldots,x_n(\tau) \in \CC\{\tau\}$
    that represents the germ of $C$, as an analytic variety, at $\tau=0$,
    such that the leading coefficients,
    as a point in $(\mathbb{C}^*)^n$ satisfies
    the initial system $\init_{\rowv{v}}(\colv{h})$,\footnote{%
        We do not claim $\colv{h}$ form a tropical basis,
        nor do we require that $\init_{\rowv{v}}(\colv{h})$ generate the corresponding initial ideal.
        Yet, as we will show, an initial system of $\colv{h}$
        of the special form we will describe
        is sufficient to uniquely determine the leading coefficients
        of the Puiseux series expansion of a solution path.
    }
    and $\rowv{v} = (v_1,\ldots,v_n) \in \QQ^n$
    are the orders of the Puiseux series
    $x_1(\tau),\ldots,x_n(\tau) \in \CC\{\tau\}$.
    Therefore, it is sufficient to show that
    there are at least $\nvol(\radjp_G)$ distinct initial systems of $\colv{h}$
    that will each contribute one solution path.

    By \Cref{lem: generic lifting},
    the regular subdivision $\Delta_\omega = \subdiv_\omega(\radjp_G)$
    is a unimodular triangulation.
    Fix a simplex $\Delta \in \Delta_\omega$,
    and let $(\rowv{v},1)$ be the upward pointing inner normal vector
    that defines $\Delta$,
    then by construction, there is an affinely independent set
    $\{\colv{a}_1,\ldots,\colv{a}_n\} \subset \partial \radjp_G$
    such that
    \begin{align*}
        \inner{ \rowv{v} }{ \colv{a}_k } + \omega(\colv{a}_k) &= 0
        \quad\text{for } k = 1,\ldots,n,\; and
        \\
        \inner{ \rowv{v}   }{ \colv{a} } + \omega(\colv{a}) &> 0
        \quad\text{for all } \colv{a} \in \radjp_G \setminus \{ \colv{0}, \colv{a}_1,\ldots,\colv{a}_n \}.
    \end{align*}
    Therefore, the exponent vectors associated with monomials with
    nonzero coefficients in $\init_{\rowv{v}} (\colv{h})$ are exactly
    $\{ 0, \colv{a}_1,\ldots,\colv{a}_n \}$.
    By \Cref{lem: simplex tree}, $\dig{T} = \dig{G}_\Delta$ is
    an acyclic subdigraph and its associated subgraph $T$
    is a spanning tree of $G$.
    Indeed, $\init_{\rowv{v}} (\colv{h})$ consists of Laurent polynomials
    \[
        \overline{w}_i
        - \sum_{j \in \mathcal{N}^+_{\vec{T}}(i)} a_{ij} \frac{x_i}{x_j}
        + \sum_{j \in \mathcal{N}^-_{\vec{T}}(i)} a_{ij} \frac{x_j}{x_i}
        \quad\text{for } i = 1,\ldots,n.
    \]
    By \Cref{lem: tree system}, this system has a unique $\CC^*$-zero,
    which is regular.
    Following from the principle of homotopy continuation,
    it gives rise to a smooth curve defined by
    $\colv{h}(\rowv{x},\tau e^{i\theta}) = 0$ in $(\CC^*)^n \times (0,1]$.

    This holds for every simplex in $\Delta_\omega$,
    i.e., each simplex $\Delta \in \Delta_\omega$
    contributes a curve define by $\colv{h} = \colv{0}$.
    Moreover, these curves are distinct,
    so 
    $
        \eta \ge |\Delta_\omega| = \nvol(\radjp_G),
    $
    which completes the proof.
\end{proof}

This establishes the fact that
the generic $\CC^*$-root count for the algebraic Kuramoto system $\aks_G$
is exactly the adjacency polytope bound,
despite the algebraic constraints on the parameters.
Note that the BKK bound for $\aks_G$ is always between
the generic $\CC^*$-root count and the adjacency polytope bound.
Therefore, we can indirectly derive the Bernshtein-genericity
of $\aks_G$.

\begin{corollary}\label{cor:bernshtein_general}
    For generic real or complex $\overline{w}_1,\ldots,\overline{w}_n$
    and generic but symmetric real or complex $k_{ij} = k_{ji} \ne 0$
    for $\{i,j\} \in \edges(G)$,
    the algebraic Kuramoto system \eqref{equ: algebraic kuramoto} is Bernshtein-general,
    and
    \[
        \mvol(\newt(\ake_{G,1}),\ldots,\newt(\ake_{G,n})) =
        \nvol(\conv(\newt(\ake_{G,1}) \cup \cdots \cup \newt(\ake_{G,n}))) =
        \nvol(\radjp_G).
    \]
\end{corollary}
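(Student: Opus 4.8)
The plan is to sandwich the (fixed) mixed volume $M := \mvol(\newt(\ake_{G,1}),\ldots,\newt(\ake_{G,n}))$ between an upper and a lower bound that both equal $\nvol(\radjp_G)$, thereby pinning down all three quantities in the displayed chain, and then to read off Bernshtein-genericity from the contrapositive of \Cref{thm:bernshtein-b}(B).

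First I would record the middle equality, which is purely combinatorial. Collecting monomials, $\supp(\ake_{G,i})$ is $\colv{0}$ together with the points $\pm(\colv{e}_i - \colv{e}_j)$ for $j \in \neighbors_G(i)$, so $\bigcup_{i=1}^{n}\supp(\ake_{G,i}) = \radjp_G$ exactly: every edge $\{i,j\}$ contributes $\pm(\colv{e}_i - \colv{e}_j)$, which appears in both $\ake_{G,i}$ and $\ake_{G,j}$, and each $\ake_{G,i}$ carries a (generically nonzero) constant term. Hence $\conv(\newt(\ake_{G,1}) \cup \cdots \cup \newt(\ake_{G,n})) = \conv(\radjp_G)$, and the two normalized volumes coincide since the polytopes are literally equal.

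Next I would bound $M$ from above and below. From above, each $\newt(\ake_{G,i}) \subseteq \conv(\radjp_G)$, so monotonicity of the mixed volume gives $M \le \mvol(\conv(\radjp_G),\dots,\conv(\radjp_G)) = \nvol(\radjp_G)$. For the lower bound, I would invoke the BKK upper bound (Bernshtein's First Theorem): the number of isolated $\CC^*$-zeros of $\aks_G$, counted with multiplicity, is at most $M$ for any coefficients; but for the generic symmetric coefficients in the hypothesis, \Cref{thm: generic root count} says this number is exactly $\nvol(\radjp_G)$ and every such zero is regular, hence of multiplicity one, so the count with multiplicity is $\nvol(\radjp_G) \le M$. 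Chaining these, $\nvol(\radjp_G) \le M \le \nvol(\radjp_G)$, so $M = \nvol(\radjp_G)$, which together with the previous paragraph establishes the full chain of identities.

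Finally, for Bernshtein-genericity: since $G$ is connected, the vectors $\colv{e}_i - \colv{e}_j$ over $\edges(G)$ already span $\RR^n$ along any spanning tree, so $\radjp_G$ is $n$-dimensional and $M = \nvol(\radjp_G) \ge 1 > 0$. If $\aks_G$ were not Bernshtein-general, some initial form $\init_{\colv{v}}(\aks_G)$ with $\colv{v} \ne \colv{0}$ would have a $\CC^*$-zero, and \Cref{thm:bernshtein-b}(B) would then force the number of isolated $\CC^*$-zeros with multiplicity to be strictly below $M = \nvol(\radjp_G)$, contradicting \Cref{thm: generic root count}. Hence no initial form of $\aks_G$ has a $\CC^*$-zero, i.e.\ $\aks_G$ is Bernshtein-general. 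I expect nothing in this argument to be genuinely hard; the points to be careful about are that the generic parameter set on which \Cref{thm: generic root count} holds is the same one named in the corollary, and that one should invoke the \emph{mixed} BKK/Bernshtein upper bound rather than merely Kushnirenko's unmixed version (the $\newt(\ake_{G,i})$ are honestly distinct polytopes unless $G$ is complete) — it is this mixed bound, combined with $\radjp_G$ being full-dimensional, that guarantees $M>0$ and closes the loop.
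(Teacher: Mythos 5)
Your proposal is correct and follows essentially the same route the paper indicates: the paper's one-line justification is precisely that the BKK mixed-volume bound is sandwiched between the generic $\CC^*$-zero count and the adjacency polytope bound, both of which equal $\nvol(\radjp_G)$ by \Cref{thm: generic root count}, with Bernshtein-genericity then read off from the contrapositive of \Cref{thm:bernshtein-b}(B). Your additional care about $\bigcup_i \supp(\ake_{G,i}) = \radjp_G$ and about $M>0$ via full-dimensionality of $\radjp_G$ fills in details the paper leaves implicit but introduces nothing different in substance.
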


\begin{remark}
    We remark that the proof of \Cref{thm: generic root count}
    utilizes many ideas from the polyhedral homotopy of
    Huber and Sturmfels \cite{HuberSturmfels1995Polyhedral}.
    The main differences are that we consider a lifting function
    that preserves the relationships among the coupling coefficients
    and we choose generators of the ideals at ``toric infinity'' with a nice tree structure
    instead of requiring them to be binomial.
    
    Finally, we draw attention to the tropical nature of this proof.
    The first half of the proof establishes the tropical version
    of the generic root count result:
    With the assignment of the valuation
    $\operatorname{val}(\overline{w}_i) = 0$ and generic but symmetric
    $\operatorname{val}(a_{ij}) = \operatorname{val}(a_{ji}) = \omega_{ij} = \omega_{ji}$,
    we showed that the intersection number of the tropical hypersurfaces
    defined by $f_{G,1},\ldots,f_{G,n}$ is bounded below by $\nvol(\radjp)$
    which is also the stable self-intersection number
    of the tropical hypersurface defined by the randomized Kuramoto system
    (as defined in \Cref{sec: randomized}) with the same valuation.
    Computing generic root counts via stable tropical intersection
    is the topic of a recent paper by
    Paul Helminck and Yue Ren \cite{HelminckRen2022Generic},
    in which the special case for complete networks is studied as an example.
\end{remark}

Another consequence of \Cref{thm: generic root count}
is the monotonicity of the generic $\CC^*$-root count,
since the volume of the adjacency polytope is strictly increasing
as new edges and nodes are added.

\begin{corollary}
    Let $(G_1,K_1,\colv{w}_1)$ and $(G_2,K_2,\colv{w}_2)$ be two
    connected networks such that $G_1 < G_2$.
    Then the generic $\CC^*$-root count of $\aks_{G_1}$
    is strictly less than that of $\aks_{G_2}$.
\end{corollary}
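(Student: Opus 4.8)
The plan is to combine \Cref{thm: generic root count}, which identifies the generic $\CC^*$-zero count of $\aks_G$ with the normalized volume $\nvol(\radjp_G)$, with a strict monotonicity statement for the normalized volumes of adjacency polytopes under the addition of edges and vertices. So the core of the argument reduces to showing: if $G_1 < G_2$ are connected graphs, then $\nvol(\radjp_{G_1}) < \nvol(\radjp_{G_2})$. Since $\nvol$ is a count of top-dimensional cells in a unimodular triangulation, it suffices to exhibit even a single extra cell appearing for $G_2$.

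First I would reduce to two atomic cases, since any inclusion $G_1 < G_2$ of connected graphs factors (through intermediate connected graphs) into steps that either (i) add one edge between two existing vertices, or (ii) add one new pendant vertex together with the edge attaching it. For case (ii), \Cref{lem: leaf extension} is essentially the statement: adding a pendant node $v$ to a connected graph exactly doubles the generic $\CC^*$-zero count, and since the count is a positive integer ($\nvol(\radjp_{G_1}) \geq 1$ because $\edges(G_1) \neq \varnothing$), doubling is strictly increasing. For case (i), the key point is that $\radjp_{G_1}$ is a face-to-face subcomplex-carrying polytope sitting inside $\radjp_{G_2}$: concretely, $\radjp_{G_2}$ contains the two extra lattice points $\pm(\colv{e}_i - \colv{e}_j)$ not in $\radjp_{G_1}$, so $\radjp_{G_1} \subsetneq \radjp_{G_2}$ as polytopes. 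One then argues that $\nvol$ is strictly monotone under strict inclusion of full-dimensional lattice polytopes with $\colv{0}$ in the interior of both — e.g. the new point $\colv{e}_i - \colv{e}_j$ lies strictly outside $\radjp_{G_1}$, so the cone over a facet of $\radjp_{G_1}$ visible from it gets subdivided, strictly increasing the total normalized volume. Invoking \Cref{thm: generic root count} on both $G_1$ and $G_2$ then transfers this strict inequality to the generic $\CC^*$-zero counts.

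The main obstacle is the geometric strict-monotonicity claim for case (i): one must be careful that adding the single pair of antipodal points $\pm(\colv{e}_i-\colv{e}_j)$ genuinely enlarges the polytope (it does, since neither point is in $\conv(\radjp_{G_1})$ — this uses that $\colv{e}_i - \colv{e}_j$ is a vertex of the symmetric edge polytope of the larger graph and is not a convex combination of the edge vectors of $G_1$, which can be seen by a suitable linear functional), and that this enlargement strictly increases normalized volume. The cleanest route is probably: $\nvol$ of a lattice polytope equals $n!$ times Euclidean volume, Euclidean volume is strictly monotone under strict inclusion of full-dimensional convex bodies, and $\radjp_{G_1}, \radjp_{G_2}$ are both full-dimensional (both graphs are connected on $n+1$ vertices, so their incidence matrices have rank $n$). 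Thus $\nvol(\radjp_{G_1}) < \nvol(\radjp_{G_2})$, and \Cref{thm: generic root count} completes the proof. I would also remark that the same argument, via the corresponding corollaries for the power-flow and phase-delay variants, extends the monotonicity statement to those systems.
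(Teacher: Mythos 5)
Your proposal is correct and follows the same route the paper intends: the paper states this corollary without a formal proof, justifying it in one sentence by combining \Cref{thm: generic root count} with the strict monotonicity of $\nvol(\radjp_G)$ under adding edges and nodes. Your decomposition into edge-addition (strict inclusion of full-dimensional polytopes, using that $\pm(\colv{e}_i-\colv{e}_j)$ are vertices of the larger polytope) and pendant-vertex-addition (doubling via \Cref{lem: leaf extension}) is a sound way to fill in that one-line justification.
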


In addition, this shows that
the phase-delayed algebraic Kuramoto system is Bernshtein-general. 

\begin{corollary}\label{cor phase delay}
    For generic choices of $\overline{w}_i, a_{ij}, C_{ij}$,
    the algebraic Kuramoto systems with phase delays
    \eqref{equ: delayed Kuramoto}
    is Bernshtein-general,
    and its $\CC^*$-root count equals to the normalized volume of $\nabla_G$.
\end{corollary}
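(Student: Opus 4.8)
The plan is to observe that the entire machinery of \Cref{thm: generic root count} carries over verbatim to the phase-delayed system \eqref{equ: delayed Kuramoto}, because the monomials appearing are exactly the same, namely constant terms and $x_i x_j^{-1}$, $x_j x_i^{-1}$ for each edge $\{i,j\}$. In particular, the support of every equation (after randomization, if desired) is again $\radjp_G$, so the adjacency polytope, its facial subgraphs, the generic symmetric lifting function $\omega$ from \Cref{lem: generic lifting}, and the unimodular triangulation $\Delta_\omega$ are all unchanged. The upper bound $\nvol(\radjp_G)$ on the $\CC^*$-zero count is immediate from Kushnirenko's theorem, so as before it suffices to produce a lower bound of the same value.

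The key steps, in order: First I would set up the parametrized homotopy $\colv{h}_i(\rowv{x},t) = \overline{w}_i - \sum_{j \in \neighbors_G(i)} a_{ij}\bigl(C_{ij}\, t^{\omega_{ij}} \frac{x_i}{x_j} - C_{ij}^{-1}\, t^{\omega_{ij}} \frac{x_j}{x_i}\bigr)$, using the \emph{same} symmetric lifting exponents $\omega_{ij} = \omega_{ji}$ on both monomials attached to edge $\{i,j\}$; the delay factors $C_{ij} = e^{\im\delta_{ij}}$ are absorbed into the coefficients and do not affect the Newton-polytope data. Second, by the Parameter Homotopy Theorem the generic $\CC^*$-root count along the ray $t(\tau) = \tau e^{i\theta}$ is a constant $\eta$ agreeing with the count at $t=1$. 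Third, for each simplex $\Delta \in \Delta_\omega$ with upward inner normal $(\rowv{v},1)$, the initial system $\init_{\rowv{v}}(\colv{h})$ has support $\{\colv{0}\} \cup \{\colv{a}_1,\dots,\colv{a}_n\}$ corresponding, via \Cref{lem: simplex tree}, to an acyclic subdigraph $\dig{T} = \dig{G}_\Delta$ whose underlying graph is a spanning tree; explicitly it reads
\[
    \overline{w}_i
    - \sum_{j \in \mathcal{N}^+_{\vec{T}}(i)} a_{ij} C_{ij}\,\frac{x_i}{x_j}
    + \sum_{j \in \mathcal{N}^-_{\vec{T}}(i)} a_{ij} C_{ij}^{-1}\,\frac{x_j}{x_i}
    \quad\text{for } i = 1,\ldots,n.
\]
Fourth, this is again a tree system in the sense of \Cref{lem: tree system} — the nonzero constants $a_{ij}C_{ij}$ and $a_{ij}C_{ij}^{-1}$ play exactly the role of the $a_{ij}$ there, and since $C_{ij} \neq 0$ the hypothesis ``any choices of coefficients in $\CC^*$'' is met — so it has a unique, regular $\CC^*$-zero. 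Fifth, homotopy continuation then yields one smooth curve per simplex, these curves are distinct, and hence $\eta \ge |\Delta_\omega| = \nvol(\radjp_G)$. Combined with the upper bound this forces equality, and since the BKK bound for \eqref{equ: delayed Kuramoto} is squeezed between the generic root count and $\nvol(\radjp_G)$, the system is Bernshtein-general, exactly as in \Cref{cor:bernshtein_general}.

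The only point requiring genuine care — and the analogue of the ``main obstacle'' — is verifying that the presence of the phase-delay factors does not spoil \Cref{lem: tree system}: one must check that the argument there (induction on $|\nodes(G)|$, peeling off a leaf and solving a binomial $\overline{w}_N \pm a_{N,N-1}C_{N,N-1}^{\mp 1} x_N^{\mp 1}$) goes through with the delayed coefficients, which it does precisely because each such binomial still has a unique $\CC^*$-root whenever $a_{N,N-1}C_{N,N-1} \neq 0$ and $\overline{w}_N$ is generic. Everything else — the genericity of $\omega$, the structure of $\Delta_\omega$, the distinctness of the solution curves — depends only on the common support $\radjp_G$ and is therefore identical to the lossless case. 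An entirely parallel argument, replacing the pair $(C_{ij}x_ix_j^{-1}, C_{ij}^{-1}x_jx_i^{-1})$ by $(g_{ij}(x_ix_j^{-1}+x_jx_i^{-1}), b_{ij}(x_ix_j^{-1}-x_jx_i^{-1}))$, handles the lossy PV-type algebraic power flow system \eqref{eq:pfeqs}: the support is still $\radjp_G$, the leaf-binomials become $P_N - (g_{N,N-1}+b_{N,N-1})x_N - (g_{N,N-1}-b_{N,N-1})x_N^{-1}$ after clearing $x_N^{-1}$ — a quadratic with generically two distinct nonzero roots, but in the tree-system initial form only one monomial of the pair survives per arc, so the unique-root argument applies unchanged — and one concludes Bernshtein-generality with $\CC^*$-zero count $\nvol(\radjp_G)$ in the same way.
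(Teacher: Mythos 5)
Your argument is correct, but it takes a genuinely different (and much longer) route than the paper. The paper's proof is a two-line specialization: the delayed system is a parametrized family in $(\overline{w}_i, a_{ij}, C_{ij})$ whose slice $C_{ij}=1$ is exactly the plain algebraic Kuramoto system \eqref{equ: algebraic kuramoto}; by the Parameter Homotopy Theorem the generic isolated $\CC^*$-root count $\eta$ of the family dominates the count at every specific parameter value, so $\eta \ge \nvol(\radjp_G)$ by \Cref{thm: generic root count}, while $\eta \le \nvol(\radjp_G)$ by Kushnirenko and \Cref{cor:bernshtein_general} — done. You instead re-run the entire polyhedral homotopy construction of \Cref{thm: generic root count} for the delayed system. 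That works, and you correctly identify the one point that genuinely needs re-checking: the initial systems are no longer literal instances of \Cref{lem: tree system}, because the arc $(i,j)$ now carries the coefficient $a_{ij}C_{ij}$ in equation $i$ but $a_{ji}C_{ji}^{-1}$ in equation $j$, so one needs the tree lemma in the slightly stronger form where each monomial occurrence carries an independent nonzero coefficient (this does hold: the weighted reduced incidence matrix of a tree with nonzero weights is still nonsingular, e.g.\ it is triangular with nonzero diagonal in BFS order, and unimodularity of the unweighted $\rinc(\dig{T})$ then pins down $\rowv{x}$). What your approach buys is an explicit homotopy algorithm for the delayed and lossy systems and independence from the fact that the family specializes to the undelayed one; what the paper's approach buys is brevity and no need to touch the tree lemma. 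Your treatment of the lossy PV-type system is likewise sound (the paper handles it by the same specialization trick via $g_{ij}=0$), though note that the coefficients $g_{ij}\pm b_{ij}$ being nonzero is itself only a generic condition, which is covered by the hypothesis.
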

\begin{proof}
    As noted before, for generic choices of $\overline{w}_1,\ldots,\overline{w}_n$,
    all $\CC^*$-zeros of this system are isolated and regular.
    Let $\eta$ be the generic number of $\CC^*$-zeros this system has.
    By \Cref{thm:bernshtein-b} and \Cref{cor:bernshtein_general},
    we know $\eta \leq \nvol(\radjp_G)$.
    By the Parameter Homotopy Theorem \cite{MorganSommese1989Coefficient},
    the number of $\CC^*$-zeros of this parameterized system constant on
    a nonempty Zariski open set of the parameters.
    By setting $C_{ij} = 1$, we recover \eqref{equ: algebraic kuramoto}
    and by \Cref{thm: generic root count} and \Cref{cor:bernshtein_general},
    this system is Bernshtein-general,
    with $\nvol(\radjp_G)$ $\CC^*$-zeros.
    This shows that $\eta \geq \nvol(\radjp_G)$, giving the result.
\end{proof}

From the same argument, we get the generic root count 
for the PV power flow equations.

\begin{corollary}
    For generic choices of $P_i, b_{ij},g_{ij}$,
    the lossy PV-type algebraic power flow system \eqref{eq:pfeqs}
    is Bernshtein-general,
    and its $\CC^*$-root count equals to the normalized volume of $\radjp_G$.
\end{corollary}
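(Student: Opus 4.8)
The plan is to repeat, almost verbatim, the proof of \Cref{cor phase delay}, replacing the specialization $C_{ij} = 1$ used there with $g_{ij} = 0$ here. The key structural fact is that the lossy PV-type algebraic power flow system \eqref{eq:pfeqs} involves exactly the same monomials as the algebraic Kuramoto system \eqref{equ: algebraic kuramoto} --- a constant term together with $x_i x_j^{-1}$ and $x_j x_i^{-1}$ for each $\{i,j\} \in \edges(G)$ --- and reduces to \eqref{equ: algebraic kuramoto} when every $g_{ij}$ vanishes.

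First I would invoke Sard's theorem: for generic $P_i$ (and arbitrary $b_{ij}, g_{ij}$) every $\CC^*$-zero of \eqref{eq:pfeqs} is isolated and regular, so it suffices to determine the generic $\CC^*$-zero count, call it $\eta$. Since the support of $f_{G,i}$ is the star of node $i$ inside $\radjp_G$, we have $\newt(f_{G,i}) \subseteq \conv(\radjp_G)$ and $\conv(\supp(f_{G,1}) \cup \cdots \cup \supp(f_{G,n})) = \conv(\radjp_G)$, exactly as for $\aks_G$; hence the adjacency polytope bound applies and $\eta \le \nvol(\radjp_G)$. One should also note that the coefficients $g_{ij} + b_{ij}$ and $g_{ij} - b_{ij}$ of $x_i x_j^{-1}$ and $x_j x_i^{-1}$ are generically nonzero, so no monomial is lost and the Newton polytopes really are those of \eqref{equ: algebraic kuramoto}.

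For the reverse inequality I would apply the Parameter Homotopy Theorem to the family \eqref{eq:pfeqs} with parameters $(P_i, b_{ij}, g_{ij})$: the number of isolated $\CC^*$-zeros is constant on a nonempty Zariski-open subset of parameter space, and that constant $\eta$ is an upper bound for the number of isolated $\CC^*$-zeros at \emph{every} parameter value. Specializing to the point with $P_i$ and $b_{ij}$ generic, $b_{ij} = b_{ji} \ne 0$, and $g_{ij} = 0$, the system \eqref{eq:pfeqs} becomes a generic symmetric instance of \eqref{equ: algebraic kuramoto}, which by \Cref{thm: generic root count} has exactly $\nvol(\radjp_G)$ isolated $\CC^*$-zeros. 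Hence $\eta \ge \nvol(\radjp_G)$, so $\eta = \nvol(\radjp_G)$. Finally, since this common value equals the mixed volume of the Newton polytopes of \eqref{eq:pfeqs} and is positive ($\radjp_G$ is full-dimensional because $G$ is connected), part (B) of \Cref{thm:bernshtein-b} rules out any nontrivial initial system with a $\CC^*$-zero, i.e.\ \eqref{eq:pfeqs} is Bernshtein-general.

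The proof is not genuinely obstructed anywhere --- all the real work is done by \Cref{thm: generic root count}. The single point to handle with care is the logical direction of the specialization step: the generic locus in $(P_i, b_{ij}, g_{ij})$-space need not intersect $\{ g_{ij} = 0 \}$, so one cannot simply evaluate $\eta$ there; instead one uses that $\eta$ dominates the isolated-$\CC^*$-zero count at the specific lossless parameter, and then reads that count off from the already-established lossless result.
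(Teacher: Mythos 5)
Your proposal is correct and follows essentially the same route as the paper, which proves this corollary "from the same argument" as \Cref{cor phase delay}: Sard's theorem for regularity, the adjacency polytope bound for the upper estimate, the Parameter Homotopy Theorem plus specialization to the lossless case $g_{ij}=0$ (recovering \eqref{equ: algebraic kuramoto} and invoking \Cref{thm: generic root count}) for the lower estimate, and Bernshtein's Second Theorem for Bernshtein-generality. Your added remark about the direction of the specialization argument — using that the generic count dominates the isolated-zero count at every parameter rather than evaluating on the measure-zero slice — is exactly the right way to make the paper's terse argument precise.
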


\begin{table}
    \centering
    \begin{tabular}{p{8cm}p{0.8cm}p{6.3cm}}\toprule
        Network & & Volume of adjacency polytope \\ \midrule
        A tree network with $N$ nodes \cite{ChenDavisMehta2018Counting} & & $2^{N-1}$ 
        \\ \midrule
        A cycle network with $N$ nodes \cite{ChenDavisMehta2018Counting} & & $ N \binom{N-1}{ \lfloor (N-1) / 2 \rfloor } $ 
        \\ \midrule
        A complete network with $N$ nodes \cite{BaillieulByrnes1982Geometric} & & $  \binom{2N-2}{ N-1 } $ 
        \\ \midrule
        A network formed by joining $k$ cycles of lengths
        $2m_1, \ldots, 2m_k$, consecutively along an edge \cite{DAliDelucchiMichalek2022Many} & &
        $\frac{1}{2^{k-1}} \prod_{i=1}^k m_i \binom{2m_i}{m_i}$ 
        \\ \midrule 
        A network formed by joining
        two odd cycles of lengths $2m_1+1$ and $2m_2+1$ along an edge \cite{DAliDelucchiMichalek2022Many}  & &
        $(m_1 + m_2 + 2m_1m_2) \binom{2m_1}{m_1} \binom{2m_2}{m_2}$  
        \\ \midrule
        A wheel graph with $N+1$ nodes \cite{DAliDelucchiMichalek2022Many}& &
        	$\begin{cases}
        		(1-\sqrt{3})^{N} + (1+\sqrt{3})^{N}     & \text{$N$ odd} \\
        		(1-\sqrt{3})^{N} + (1+\sqrt{3})^{N} - 2 & \text{$N$ even}
        	\end{cases}$
        \\
        \bottomrule
    \end{tabular}
    \caption{Known results for the volume of adjacency polytopes for families of networks.}
    \label{table:adj polytope vol}
\end{table}

We conclude this section by showing some known results
for volumes of adjacency polytopes in \Cref{table:adj polytope vol}.
We observe that for cyclic, tree and wheel networks,
previous results by convex geometers give sharp bounds
for the number of complex roots to the Kuramoto equations.
Conversely, power system engineers have known for decades the
complex root count of the PV power flow equations for complete networks \cite{BaillieulByrnes1982Geometric}. 
Now due to \Cref{thm: generic root count},
this gives the volume of adjacency polytopes of complete networks.
This demonstrates how \Cref{thm: generic root count} 
bridges the two worlds.

\section{Explicit genericity conditions on coupling coefficients}
\label{sec: explicit genericity conditions}

So far, we have established that the generic $\CC^*$-root count of
the algebraic Kuramoto system $\aks_G$
is the adjacency polytope bound, $\nvol(\radjp_G)$.
That is, for almost all choices of the parameters
$\vec{w}$ (natural frequencies) and $K$ (coupling coefficients),
the $\CC^*$-root count for $\aks_G$ is equal to $\nvol(\radjp_G)$.
This section aims to understand exactly when
the $\CC^*$-root count drops below $\nvol(\radjp_G)$. 
We focus on the effect of coupling coefficients $\{ k_{ij} \}$.
The effect of natural frequencies $\vec{w}$
will be explored in \Cref{sec: +dimensional}.

\begin{definition}[Exceptional coupling coefficients]\label{def: global exceptional}
    Given a connected graph $G$ with $n+1$ nodes,
    we define its \term{exceptional coupling coefficients},
    $\badcoupling(G)$, to be the set of
    symmetric and nonzero coupling coefficients $K = \{ k_{ij} \}_{i \sim j}$
    such that, counting multiplicity, the number of isolated $\CC^*$-zeros of  $\aks_{(G,K,\colv{w})}$
    is strictly less than $\nvol(\radjp_G)$, for any choice of $\colv{w} \in \CC^n$. 
\end{definition}

In other words, $\badcoupling(G)$ is the set of symmetric coupling coefficients
for which the algebraic Kuramoto system \eqref{equ: algebraic kuramoto}
is not Bernshtein-general.
In the following subsections, we will develop an
algebraic and graph-theoretic description for
the exceptional coupling coefficients $\badcoupling(G)$.

We start with the analysis of a single facial system.
For a face $F$ of $\radjp_G$,
we describe the coupling coefficients
for which the facial system $\init_F (\raks_G)$ has a $\CC^*$-zero,
signaling the genericity condition for \Cref{thm: generic root count} is broken
and the number of isolated $\CC^*$-zeros of $\aks_{(G,K)}$ drops.

\begin{definition}[Exceptional coupling coefficients for a face]\label{def: bad coupling}
    For a proper face of $\radjp_G$, we define
    \[
        \badcoupling(\dig{G}_F) = \{
            \rowv{k}(\dig{G}_F) \in (\CC^*)^{|F|} \mid
            \init_F(\raks_G) = \colv{0} \ \text{has a } \CC^*\text{-solution}
        \}
    \]
    to be the set of \term{exceptional coupling coefficients} with respect to the face $F$
    (or $\dig{G}_F$).
\end{definition}

It is worth noting that such exceptional coupling coefficients are extremely rare.
By \Cref{thm: generic root count,thm:bernshtein-b},
$\badcoupling(\dig{G}_F)$ is contained in a
proper and Zariski closed subset of Lebesgue measure zero.
Yet, a full description of $\badcoupling(\dig{G}_F)$ is possible,
as we will detail in the rest of this section.
We start, in \Cref{sec: circulation form},
with a ``circulation'' formulation of a facial system,
which bridges together the algebraic and graph-theoretic
sides of this problem.
In \Cref{sec: tree solution},
we describe a parameterization of
the $\CC^*$-zero set of a facial system.
We use this parameterization in \Cref{sec: facial balancing conditions} to derive an explicit description
of the exceptional coupling coefficients.
Finally, we conclude this section with a few examples
of global descriptions of
$\badcoupling(G)$
in \Cref{sec: global balancing conditions}.

\subsection{Circulation forms of facial systems}\label{sec: circulation form}

To connect the abstract algebraic condition in \Cref{def: global exceptional}
to the concrete graph-theoretic features,
we transform
facial systems of $\raks_G$
into equivalent ``circulation forms",
which is understood more easily from the graph-theoretical viewpoint.

\begin{lemma}\label{lem: face system}
    Let $F \ne \varnothing$ be a proper face of $\radjp_G$,
    $\rinc(\dig{G}_F)$ and $\rowv{a}(\dig{G}_F)$
    be the corresponding reduced incidence matrix of $\dig{G}_F$ and its coupling vector, respectively.
    Then $\init_F(\raks_G)(\rowv{x}) = \colv{0}$ 
    if and only if
    \begin{equation}\label{equ: cycle form}
        \rinc(\dig{G}_F)
        \left( 
            \rowv{x}^{ \rinc(\dig{G}_F) }
            \circ
            \rowv{a}(\dig{G}_F)
        \right)^\top
        =
        \colv{0}.
    \end{equation}
\end{lemma}

\begin{proof}
    Recall that $\raks_G = R \cdot \aks_G$
    for a nonsingular matrix $R = [r_{ij}]$
    Since $a_{ij} = a_{ji}$, we have
    \begin{align*}
        \init_F (\rake_{G,k})(\rowv{x}) 
        &= 
        \sum_{(i,j) \in \dig{G}_F} 
            (r_{ki} - r_{kj}) \, 
            a_{ij} \, \rowv{x}^{\colv{e}_i - \colv{e}_j}
        = 
        \sum_{(i,j) \in \dig{G}_F} 
            \inner{ \rowv{r}_k }{ \colv{e}_i - \colv{e}_j } \,
            a_{ij} \, \rowv{x}^{\colv{e}_i - \colv{e}_j},
        \quad\text{for } k = 1,\dots,n,
    \end{align*}
    where $\rowv{r}_k$ is the $k$-th row of the matrix $R$.
    Since $\{ \colv{e}_i - \colv{e}_j \mid  (i,j) \in \edges(\dig{G}_F)$
    are the columns in $\rinc(\dig{G}_F)$,
    \[
        \init_F (\raks_G)(\rowv{x}) 
        = 
        R \, \rinc(\dig{G}_F) \,
        \left( 
            \rowv{x}^{ \rinc(\dig{G}_F) }
            \circ
            \rowv{a}(\dig{G}_F)
        \right)^\top.
    \]
    Since the square matrix $R$ is assumed to be nonsingular,
    this establishes the equivalence.
\end{proof}

Equation \eqref{equ: cycle form} will be referred to
as the \emph{circulation form} of the facial system $\init_F(\raks_G)$
since the null space of $\rinc(\dig{G}_F)$
can be interpreted as the \emph{space of circulations} of $\dig{G}_F$.
Circulations on directed graphs are assignments of flows along directed edges
so that the sum of incoming flows is equal to the sum of outgoing flows at each nodes.
Equation \eqref{equ: cycle form} requires
$\rowv{x}^{\rinc(\dig{G}_F)} \circ \rowv{a}(\dig{G}_F)$
to be a circulation on the facial digraph $\dig{G}_F$,
where the flows are allowed to
have complex values.

\begin{figure}
    \centering
    \subfloat[AP derived from $C_4$]{
        \begin{overpic}[width=0.25\textwidth]{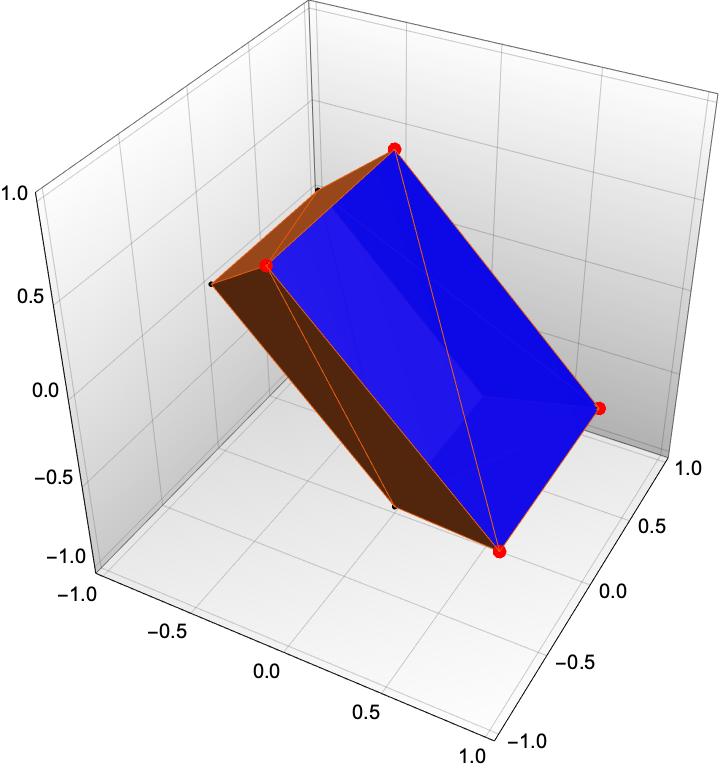}
         \put (20,70) {$\displaystyle e_{32}$}
         \put (40,85) {$\displaystyle e_{30}$}
         \put (60,20) {$\displaystyle e_{12}$}
         \put (75,55) {$\displaystyle e_{10}$}
        \end{overpic}
        \label{fig: AP of C4}
    }
    \subfloat[AP derived from $H_4$]{
        \begin{overpic}[width=0.25\textwidth]{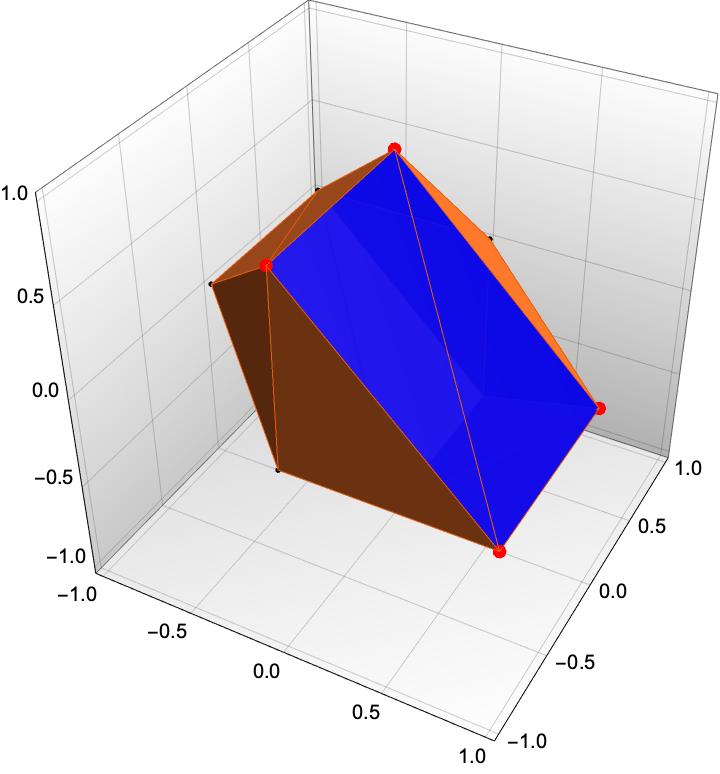}
         \put (20,70) {$\displaystyle e_{32}$}
         \put (40,85) {$\displaystyle e_{30}$}
         \put (60,20) {$\displaystyle e_{12}$}
         \put (75,55) {$\displaystyle e_{10}$}
        \end{overpic}
        \label{fig: AP of H4}
    } 
    \subfloat[AP derived from $K_4$]{
        \begin{overpic}[width=0.25\textwidth]{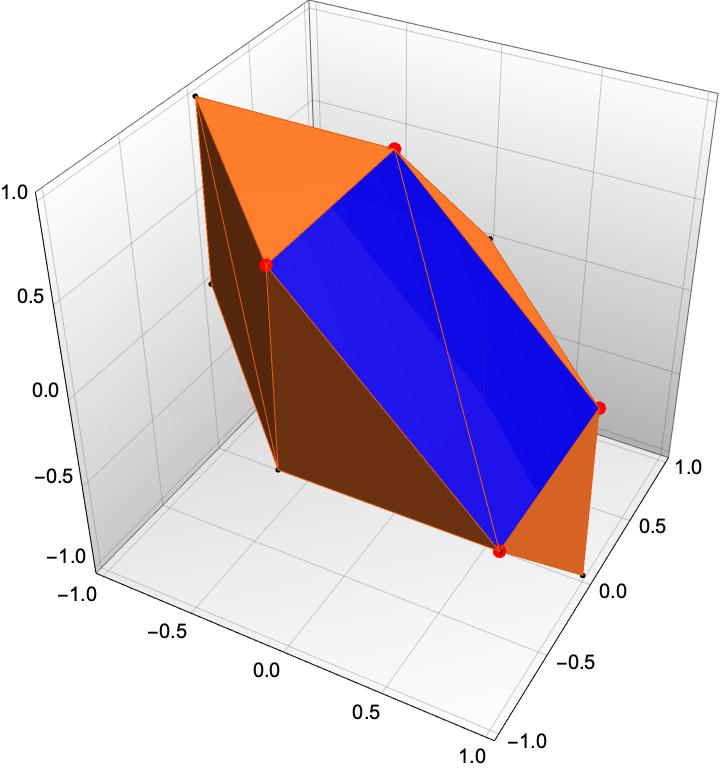}
         \put (15,70) {$\displaystyle e_{32}$}
         \put (40,85) {$\displaystyle e_{30}$}
         \put (60,20) {$\displaystyle e_{12}$}
         \put (75,55) {$\displaystyle e_{10}$}
        \end{overpic}
        \label{fig: AP of K4}
    }
    \subfloat[Facet digraph]{
        \begin{tikzpicture}[
            main/.style = {draw,circle},
            every edge/.style = {draw,-latex,thick},
            scale=0.9
        ] 
            \node[main] (0) at ( 0, 1.8) {$0$}; 
            \node[main] (2) at ( 0,-1.8) {$2$}; 
            \node[main] (1) at ( 1.0, 0) {$1$}; 
            \node[main] (3) at (-1.0, 0) {$3$}; 
            \path (1) edge (0);
            \path (1) edge (2);
            \path (3) edge (0);
            \path (3) edge (2);
        \end{tikzpicture} 
        \label{fig: C4 facet digraph}
    }
    \caption{
        The adjacency polytopes of $C_4,H_4 = K_4 - \{e_{13} \}$ and $K_4$ with the vertex labels $e_{ij} := e_i - e_j$
        together with the facet subdigraph corresponds to the common facet in blue.
    }
    \label{fig: AP examples}
\end{figure}

\begin{example}\label{ex: a facet of N=4}
Consider the facial subdigraph in \Cref{fig: C4 facet digraph},
which is shared by $C_4$, $H_4 = K_4 - \{e_{13}\}$, 
and $K_4$.
The corresponding incidence and reduced incidence matrices,
respectively, are
\begin{align*}
    Q(\dig{G}_F) =
    \bordermatrix{ & e_{10} & e_{12} & e_{32} & e_{30} \cr
    0 & -1 & 0 & 0 & -1 \cr
    1 & 1 &  1 &  0 & 0 \cr
    2 &  0 & -1 & -1 & 0 \cr
    3 &  0 &  0 &  1 & 1}, \qquad \qquad
    \rinc(\dig{G}_F) =
    \bordermatrix{ & e_{10} & e_{12} & e_{32} & e_{30} \cr
    1 & 1 &  1 &  0 & 0 \cr
    2 &  0 & -1 & -1 & 0 \cr
    3 &  0 &  0 &  1 & 1}.
\end{align*}
By \Cref{lem: face system}, the facial system
$\init_F(\raks_{C_4})(\rowv{x}) = \colv{0}$ is equivalent to
its circulation form

\begin{align}\label{eq circ form}
       \begin{bmatrix}
        0 \\ 0 \\ 0
    \end{bmatrix} &= \begin{bmatrix}
        1 &  1 &  0 & 0 \\
        0 & -1 & -1 & 0 \\
        0 &  0 &  1 & 1
    \end{bmatrix}
    \left(
        \begin{bmatrix}
            x_1 \; x_2 \; x_3
        \end{bmatrix}^{
            \left[
            \begin{smallmatrix}
                1 &  1 &  0 & 0 \\
                0 & -1 & -1 & 0 \\
                0 &  0 &  1 & 1
            \end{smallmatrix}
            \right]
        }
        \circ
        \begin{bmatrix}
            a_{10} \; a_{12} \; a_{32} \; a_{30}
        \end{bmatrix}
    \right)^\top
    = \begin{bmatrix}
        a_{10}x_1 + a_{12} \frac{x_1}{x_2} \\
        - a_{12} \frac{x_1}{x_2} - a_{32} \frac{x_3}{x_2} \\
        a_{32} \frac{x_3}{x_2} + a_{30} x_3
    \end{bmatrix}
\end{align}

Observe that adding the three equations in \eqref{eq circ form} together
implies that at node $0$ the balancing condition
$-a_{10} x_1 - a_{30} x_3 = 0$ is also satisfied.
Therefore,
$\begin{bmatrix}
    a_{10}x_1 & a_{12}\frac{x_1}{x_2} & a_{32} \frac{x_3}{x_2} & a_{30}x_3
\end{bmatrix}^\top$
is a nonzero circulation on $\dig{G}_F$ if and only if \eqref{eq circ form} is true.

\end{example}

This circulation form suggests a strong tie between
the topological features of a facial subdigraphs $\dig{G}_F$ and
the algebraic features of its facial system $\init_F(\raks_G)$.
The rest of this paper is devoted to exploring this connection
by leveraging the wealth of existing knowledge about
the facial complex of $\radjp_G$.
In particular, 
we will show that the circulation form of facial systems
produce concrete genericity conditions on the coupling coefficients.
Before doing so, we state two immediate observations that simplify our discussions
but defer the proofs to \Cref{sec: lemmas}.

\begin{restatable}{lemma}{componentwise}\label{lem: componentwise facial system}
    For a proper face $F \ne \varnothing$ of $\radjp_G$
    whose facial subdigraph $\dig{G}_F$ consists of
    weakly connected components $\dig{H}_1,\ldots,\dig{H}_\ell$,
    there exist faces $F_1,\ldots,F_\ell \ne \varnothing$ of $F$
    such that $\dig{H}_i = \dig{G}_{F_i}$ and 
    \[
        \init_F(\raks_G)(\rowv{x}) = \colv{0}
        \quad\Longleftrightarrow\quad
        \init_{F_i}(\raks_G)(\rowv{x}) = \colv{0}
        \quad\text{for each } i = 1,\ldots,\ell.
    \]
\end{restatable}

\begin{restatable}{lemma}{smallergraph}\label{lem: smaller sub graph}
    For a proper face $F \ne \varnothing$ of $\radjp_G$,
    let $G' = G[G_F]$ 
    and $F'$ be the embedding of $F$ in $\radjp_{G'}$.
    Then
    $\init_{F }(\raks_{G })$ has a $\CC^*$-zero if and only
    $\init_{F'}(\raks_{G'})$ has a $\CC^*$-zero.
\end{restatable}

Together, these lemmas tell us that it is sufficient to focus on
facial systems corresponding to facial subgraphs that are
connected and spanning, which are exactly the facet systems.

\subsection{Tree solutions}\label{sec: tree solution}

With our focus fixed on facet systems,
we now give an explicit parameterization for
the possible solution sets to a given facet system.

\begin{definition}[Tree solutions]\label{def: tree solution}
    For a given facet $F$ of $\radjp_G$, 
    let $\dig{H} = \dig{G}_F$ be its facet subdigraph.
    We fix a weak spanning tree $\dig{T}$,
    i.e., a subdigraph $\dig{T} \le \dig{H}$
    whose underlying undirected graph is a spanning tree
    of the underlying undirected graph of $\dig{H}$.
    If $\dig{T} \ne \dig{H}$,
    let $\colv{v}_1, \ldots, \colv{v}_d$ be the columns of $\rinc(\dig{H})$
    corresponding to the directed edges in $\edges(\dig{H}) \setminus \edges(\dig{T})$,
    as column vectors,
    and let
    $\rowv{\tau}_k =  (\rinc(\dig{T})^{-1} \colv{v}_k)^\top$
    for $k=1,\ldots,d$.  
    We define the \term{tree solution} induced by $\dig{T}$ to be 
    \begin{equation}\label{equ: tree solution}
        \rowv{x}_{\dig{T}}(\lambda_1,\ldots,\lambda_d) =
        \left[
            \left( \sum_{k=1}^{d} \lambda_k \rowv{\tau}_k \right) \circ
            \rowv{a}_{\dig{T}}^{-I}
        \right]^{
            \rinc(\dig{T})^{-1}
        }.
    \end{equation}
\end{definition}

\begin{example}[A tree solution for $C_4$]
    Consider the graph $G = C_4$
    and the subdigraph $\dig{H}$ shown in \Cref{fig: C4 facet digraph}.
    With the choice $\dig{T} < \dig{H}$ such that
    $\edges(\dig{T}) = \{(1,0),(1,2),(3,2) \}$,
    we have one directed edge in $\dig{H}$
    that is outside $\dig{T}$,
    and it corresponds to the incidence vector
    $\colv{v}_1 = \begin{bmatrix} 0 & 0 & 1 \end{bmatrix}^\top$.
    Then
    \begin{equation*}
        \rowv{\tau}_1 =  (\rinc(\dig{T})^{-1} \colv{v}_1)^\top \\
        = \left( \begin{bmatrix}
            1 &  1 &  0 \\
            0 & -1 & -1 \\
            0 &  0 &  1 \\
        \end{bmatrix}^{-1} \cdot \begin{bmatrix}
            0 \\ 0  \\ 1
        \end{bmatrix} \right)^\top \\
        = \begin{bmatrix}
            1 & -1 & 1
        \end{bmatrix}.
    \end{equation*}
    Therefore, the tree solution induced by $\dig{T}$ is 
    \begin{align*}
        x_{\dig{T}}(\lambda_1)
        &= \left(
            \lambda_1
            \begin{bmatrix}
                1 & -1 & 1
            \end{bmatrix}
            \circ
            \begin{bmatrix}
                a_{10}^{-1} & a_{12}^{-1} & a_{32}^{-1}
            \end{bmatrix}
        \right)^{\left[\begin{smallmatrix}
            1 &  1 &  1 \\
            0 & -1 & -1 \\
            0 &  0 &  1
        \end{smallmatrix}\right]} \\
        &= \begin{bmatrix}
            \lambda_1 a_{10}^{-1} & \frac{\lambda_1 a_{10}^{-1}}{-\lambda_1 a_{12}^{-1}} & \frac{\lambda_1 a_{10}^{-1} \lambda_1 a_{32}^{-1}}{-\lambda_1 a_{12}^{-1}} \end{bmatrix} \\
            &= \begin{bmatrix}
                \frac{\lambda_1}{a_{10}} & \frac{a_{12}}{a_{10}} & \frac{-\lambda_1 a_{12}}{a_{10} a_{32}}
        \end{bmatrix}.
    \end{align*}
\end{example}

Indeed, each tree solution provides a candidate solution 
in the sense that if $\init_F(\raks_{G})$ has a $\CC^*$-zero,
then it must be given by $\rowv{x}_{\dig{T}}$
for any weak spanning tree $\dig{T}$.

\begin{lemma}\label{lem: initial soln must be tree soln}
    If $\rowv{x}$ is a $\CC^*$-zero of $\init_F(\raks_G)$,
    then for any weak spanning tree $\dig{T}$ of $\dig{G}_F$,
    there exists $\lambda_1,\ldots,\lambda_d \in \CC$ such that
    \[
        \rowv{x} = \rowv{x}_{\dig{T}}(\lambda_1,\ldots,\lambda_d).
    \]
\end{lemma}

\begin{proof}
    Let $\dig{H}$, $\dig{T}$, $\rowv{\tau}_1,\ldots,\rowv{\tau}_d$,
    $\colv{v}_1,\ldots,\colv{v}_d$, and $\rowv{x}_{\dig{T}}$
    be those referenced in \Cref{def: tree solution}.
    Suppose $\rowv{x}$ is a $\CC^*$-zero of $\init_F(\raks_G)$.
    We shall arrange the columns/entries, so that
    \begin{align*}
        \rinc(\dig{H}) &= 
        \begin{bmatrix} \rinc(\dig{T}) & \colv{v}_1 & \cdots & \colv{v}_d \end{bmatrix} 
        &&\text{and} &
        \rowv{a}(\dig{H}) &=
        \begin{bmatrix} \rowv{a}(\dig T) & \alpha_1 & \cdots & \alpha_d \end{bmatrix}
    \end{align*}
    where $\alpha_1,\ldots,\alpha_d$ are the complex coupling coefficients
    on the directed edges corresponds to $\colv{v}_1,\ldots,\colv{v}_d$.
    
    By \Cref{lem: face system}, the circulation form of $\init_F(\raks_G)$ is
    \begin{equation}\label{equ: tree solution plugged in}
        \rinc(\dig{H}) \left(
            \rowv{x}^{ \rinc(\dig{H}) }
            \circ
            \rowv{a}(\dig{H})
        \right)^\top
        =
        \begin{bmatrix} \rinc(\dig{T}) & \colv{v}_1 & \cdots & \colv{v}_d \end{bmatrix} 
        \begin{bmatrix}
            \left( \rowv{x}^{\rinc(\dig{T}) } \circ \rowv{a}(\dig{T}) \right)^\top
            \\
            \rowv{x}^{\colv{v}_1} \cdot \alpha_1 \\
            \vdots \\
            \rowv{x}^{\colv{v}_d} \cdot \alpha_d \\
        \end{bmatrix}
        =
        \colv{0}.
    \end{equation}
    Since the null space of $\rinc(\dig{H})$ is spanned by
    $(\rowv{\tau}_1,-\rowv{e}_1)^\top, \ldots, (\rowv{\tau}_d,-\rowv{e}_d)^\top \in \ZZ^{n+d}$.
    There exist $\lambda_1,\ldots,\lambda_d \in \CC$ such that
   \begin{align}
        \sum_{k=1}^d \lambda_k \rowv{\tau}_k =
        \rowv{x}^{\rinc(\dig{T}) } \circ \rowv{a}(\dig{T}). \label{eq : null space equivalence}
    \end{align}
    The conclusion thus follow from the fact that
    the reduced incidence matrix $\rinc(\dig{T})$ is unimodular.
\end{proof}

\Cref{lem: initial soln must be tree soln} shows that any $\CC^*$-zero of a facet system must be given by a tree solution.
Indeed, \eqref{equ: tree solution plugged in}
shows that with an additional condition,
the converse is also true.

\begin{restatable}{corollary}{TreeCondition}\label{cor: tree solution condition}
    Let $F$, $\dig{T}$, $\rowv{x}_{\dig{T}}$, and
    $\rowv{\tau}_1,\ldots,\rowv{\tau}_d$
    be as in \Cref{def: tree solution},
    and let $\rowv{\eta}_i = [\rowv{\tau}_i,-\rowv{e}_i]$ and
    $\colv{\eta}_i = \rowv{\eta}_i^\top$ for $i=1,\ldots,d$.
    Then, for nonzero $\lambda_1,\ldots,\lambda_d$,
    the tree solution $\rowv{x}_{\dig{T}}(\lambda_1,\ldots,\lambda_d)$
    is a solution to the facial system $\init_F (\raks_G)$
    if and only if
    \[
        \left(
            \sum_{k=1}^d \lambda_k \rowv{\eta}_k
        \right)^{\colv{\eta}_i}
        \cdot
        \rowv{a}(\dig{H})^{-\colv{\eta}_i}
        = 1
        \quad\text{for each } i=1,\ldots,d.
    \]
\end{restatable}

An elementary proof is included in the appendix for completeness.
In the following we will provide a graph-theoretic interpretation
of these algebraic conditions.

\subsection{Exceptional coupling characterized by balancing conditions}\label{sec: facial balancing conditions}

Now, we will show that the exceptional coupling coefficients
with respect to a face are exactly those coefficients
that satisfy ``balancing'' conditions on the corresponding face subdigraph.

Several distinct concepts that capture intricate
balancing conditions on Kuramoto networks have been studied.
Following this tradition, we will define ``balanced networks''.
Depending on the additional information we take into consideration,
it has three layers of meanings.

\begin{definition}\label{def: edge-wise balancing}
    We say an acyclic digraph $\dig{H}$,
    whose underlying graph is a cycle,
    is \term{balanced} if it contains
    an equal number of directed edges pointing clockwise and counterclockwise.
\end{definition}

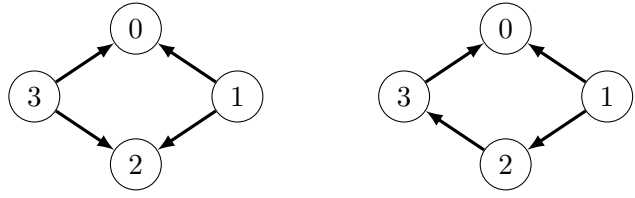
\begin{wrapfigure}[11]{r}{0.6\textwidth}
    \begin{subfigure}[b]{0.29\textwidth}
        \centering
        \begin{tikzpicture}[
            main/.style = {draw,circle},
            every edge/.style = {draw,-latex,very thick},
            scale=0.9
        ] 
            \node[main] (0) at ( 0, 1) {$0$}; 
            \node[main] (2) at ( 0,-1) {$2$}; 
            \node[main] (1) at ( 1.5, 0) {$1$}; 
            \node[main] (3) at (-1.5, 0) {$3$}; 
            \path (1) edge (0);
            \path (1) edge (2);
            \path (3) edge (0);
            \path (3) edge (2);
        \end{tikzpicture} 
        \caption{Balanced: 2 counterclockwise and 2 clockwise edges}
        \label{fig: balanced graph a}
    \end{subfigure} 
    \begin{subfigure}[b]{0.29\textwidth}
        \centering
        \begin{tikzpicture}[
            main/.style = {draw,circle},
            every edge/.style = {draw,-latex,very thick},
            scale=0.9
        ] 
            \node[main] (0) at ( 0, 1) {$0$}; 
            \node[main] (2) at ( 0,-1) {$2$}; 
            \node[main] (1) at ( 1.5, 0) {$1$}; 
            \node[main] (3) at (-1.5, 0) {$3$}; 
            \path (1) edge (0);
            \path (1) edge (2);
            \path (3) edge (0);
            \path (2) edge (3);
        \end{tikzpicture} 
        \caption{Unbalanced: 1 counterclockwise and 3 clockwise edges}
    \end{subfigure}
    \caption{Balanced and unbalanced digraphs.}
    \label{fig: balanced and unbalanced C4}
\end{wrapfigure}
This definition implies that a balanced cycle must be an even cycle.
In \cite{ChenDavisKorchevskaia2022Facets},
it was shown that all facial subdigraphs of $C_N$ must be balanced.
\Cref{fig: balanced and unbalanced C4} shows two subdigraphs of $C_4$,
one balanced and one unbalanced.

In addition, if we take into consideration the weights
on the directed edges, i.e. the coupling coefficients in our context,
we can define a stronger notion of balanced cycle.

\begin{definition}\label{def: weight-wise balancing}
    We say an acyclic digraph $\dig{H}$,
    whose underlying graph is $C_N$,
    is \term{balanced with respect to the weights}
    $K = \{ k_{ij} \mid (i,j) \in \edges(\dig{H}) \}$ if
    it is balanced (as in \cref{def: edge-wise balancing}) and
    \begin{equation}\label{equ: balancing of weights}
        \prod_{(i,j) \in \edges^+(\dig{H})} k_{ij} = 
        (-1)^{N/2}
        \prod_{(i,j) \in \edges^-(\dig{H})} k_{ij},
    \end{equation}
    where $\edges^+(\dig{H})$ and $\edges^-(\dig{H})$
    are the sets of edges pointing clockwise and counterclockwise, respectively.
\end{definition}

\begin{example}
    Continuing with the examples on subdigraphs of $C_4$,
    the digraph shown in \Cref{fig: balanced graph a} 
    is balanced with respect to $K = \{ k_{ij} \}$ if and only if
    $k_{10} k_{32} = k_{12} k_{30}$. 
\end{example}

It turns out that 
this balancing condition characterizes
the exceptional coupling coefficients when the facial subgraph $G_F$ is a cycle. 

\begin{proposition}\label{thm: cycle balancing condition}
    Let $F$ be a face of $\radjp_G$
    such that $G_F$ is a cycle.
    Then the set of exceptional coupling coefficients
    with respect to $F$ is 
    \[
        \badcoupling(\dig{G}_F) = \{
            K \mid
            \dig{G}_F
            \text{ is balanced with respect to the weights assigned by $K$}
        \}.
    \]
\end{proposition}

\begin{proof}
    By \Cref{lem: smaller sub graph},
    it is sufficient to assume that $G_F$ spans $G$.
    By \Cref{lem: initial soln must be tree soln},
    it is sufficient to consider the tree solution
    $\rowv{x}_{\dig{T}}(\lambda)$ (\Cref{def: tree solution})
    induced by a weak spanning tree $\dig{T}$ of $\dig{G}_F$.
    
    Let $\rowv{\eta}^\top \in \{+1,-1\}^{n+1}$ be a
    primitive basis vector for $\ker \rinc(\dig{G}_F)$.
    Its entries gives the orientations of the edges in $\dig{G}_F$.
    Since $\dig{G}_F$ must be edge-wise balanced
    (\Cref{def: edge-wise balancing}),
    exactly half of the entries in $\rowv{\eta}$ is $-1$.
    By \Cref{cor: tree solution condition},
    $\rowv{x}_{\dig{T}}$ parameterizes $\CC^*$-zeros of
    $\init_F(\raks_G)$ if and only if
    \[
        1 =
        (\lambda \rowv{\eta})^{\colv{\eta}} \cdot  \rowv{a}(\dig{G}_F)^{-\colv{\eta}}
        = \rowv{\eta}^{\colv{\eta}} \cdot \rowv{a}(\dig{G}_F)^{-\colv{\eta}}
        = (-1)^{N/2} \rowv{a}(\dig{G}_F)^{-\colv{\eta}}
        = (-1)^{N/2} 
        \prod_{(i,j) \in \edges(\dig{G}_F)} k_{ij}^{\sigma(i,j)},
    \]
    where $\sigma(i,j) \in \{ +1, -1 \}$ are the entries in $\rowv{\eta}$
    and they indicate the orientations
    of the directed edge $(i,j)$ in $\dig{G}_F$.
    Rearranging factors here produces the desired equation
    \eqref{equ: balancing of weights}.
\end{proof}

For subdigraphs beyond cycles,
more complicated balancing conditions are required
to characterize the exceptional coupling coefficients.
Indeed, we need all cycles in $G_F$ to be balanced
with respect to a common circulation.
Recall that a circulation on a digraph $\dig{H}$
is an assignment of weights on the edges
$\eta = \{ \eta_{ij} \mid (i,j) \in \edges(\dig{H}) \}$
so that the sum of the incoming and outgoing weights are balanced on each node,
i.e., for each $i=1,\ldots,n$,
\[\sum_{j, (i,j) \in \edges(\dig{H})} \eta_{ij} = \sum_{k,(k,i) \in \edges(\dig{H})} \eta_{ki}.\]
Circulations correspond to a null vector of the reduced incidence matrix $\rinc(\dig{H})$.
As before, we allow complex weights to be used in circulations.

\begin{definition}\label{def: circulation balancing}
    For an acyclic digraph $\dig{H}$,
    let $H$ be its underlying undirected graph,
    let $\{ C_1,\ldots,C_d \}$ be the cycles in $H$,
    and let $\dig{C}_1,\ldots,\dig{C}_d < \dig{H}$ be their corresponding digraphs.
    
    We say $(\dig{H},K)$ is \term{balanced with respect to a circulation}
    $\eta = \{ \eta_{ij} \ne 0 \mid (i,j) \in \edges(\dig{H}) \}$ if
    $\dig{C}_1,\ldots,\dig{C}_d$ are balanced
    (in the sense of \Cref{def: edge-wise balancing}), and
    \begin{equation}\label{equ: balancing of circulation}
        \prod_{(i,j) \in \edges^+(\dig{C}_\ell)} \frac{k_{ij}}{\eta_{ij}} = 
        \prod_{(i,j) \in \edges^-(\dig{C}_\ell)} \frac{k_{ij}}{\eta_{ij}},
    \end{equation}
    for each $\ell=1,\ldots,d$,
    where $\edges^+(\dig{C}_k)$ and $\edges^-(\dig{C}_k)$
    are the sets of directed edges of $\dig{C}_k$
    in the counterclockwise and clockwise orientations, respectively.
\end{definition}

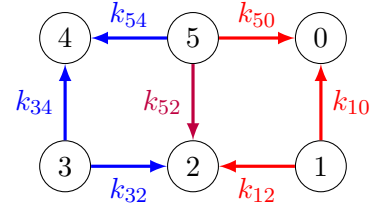
\begin{wrapfigure}[10]{r}{0.30\textwidth}
    \centering
    \begin{tikzpicture}[
        main/.style = {draw,circle},
        every edge/.style = {draw,-latex,very thick},
        scale=1.7
    ] 
        \node[main] (0) at ( 2, 1) {$0$}; 
        \node[main] (1) at ( 2, 0) {$1$}; 
        \node[main] (2) at ( 1, 0) {$2$}; 
        \node[main] (3) at ( 0, 0) {$3$}; 
        \node[main] (4) at ( 0, 1) {$4$}; 
        \node[main] (5) at ( 1, 1) {$5$}; 
        \path[color=red] (1) edge node[right] {$k_{10}$}  (0);
        \path[color=red] (1) edge node[below] {$k_{12}$}  (2);
        \path[color=blue] (3) edge node[left ] {$k_{34}$}  (4);
        \path[color=blue] (3) edge node[below] {$k_{32}$}  (2);
        \path[color=blue] (5) edge node[above] {$k_{54}$}  (4);
        \path[color=purple] (5) edge node[left ] {$k_{52}$}  (2);
        \path[color=red] (5) edge node[above] {$k_{50}$}  (0);
    \end{tikzpicture} 
    \caption{Balancing condition with respects to weights}
    \label{fig: figure 8 weight balancing}
\end{wrapfigure}
Here, it is sufficient to restrict $\{ C_1, \ldots, C_d \}$
to be a cycle basis of $H$.
Also note that this definition takes into consideration both
the weights on directed edges and a given circulation.
It is a generalization of
\Cref{def: weight-wise balancing}
since the vector space of circulations on a cycle is 1-dimensional.

\begin{example}
    \Cref{fig: figure 8 weight balancing}
    shows a digraph with weighted edges.
    It is balanced with respect to a circulation
    $\eta = \{ \eta_{ij} \}$ if
    \begin{align}\label{equ: figure 8 weight balancing 1}
        \frac{ \textcolor{red}{k_{50}} \textcolor{red}{   k_{12}} }{ \eta_{50} \eta_{12} } &= 
        \frac{ \textcolor{red}{k_{10}} \textcolor{purple}{k_{52}} }{ \eta_{10} \eta_{52} }
        &&\text{and} &
        \frac{ \textcolor{purple}{k_{52}} \textcolor{blue}{k_{34}} }{ \eta_{52} \eta_{34} } &= 
        \frac{ \textcolor{blue}{  k_{54}} \textcolor{blue}{k_{32}} }{ \eta_{54} \eta_{32} }.
    \end{align}
\end{example}

This stronger balancing condition gives us the exact description
for the exceptional coupling coefficients
in the bridgeless cases.
Recall, a \emph{bridge} of a graph is an edge
that is not contained in any cycle of the graph. 
A graph with no bridges is said to be \emph{bridgeless}.

\begin{theorem}\label{thm: circulation condition}
    Let $F$ be a face of $\radjp_G$.
    \begin{itemize}
        \item
            If $G_F$ contains a bridge,
            then $\badcoupling(\dig{G}_F) = \varnothing$.
        \item
            If $G_F$ is bridgeless,
            then
            \[
                \badcoupling(\dig{G}_F) = \{
                    K \mid
                    \dig{G}_F \text{ is balanced with respect to
                    a nonzero circulation on } \dig{G}_F
                \}.
            \]
    \end{itemize}
\end{theorem}

\begin{proof}
    We now consider the first case. 
    If $G_F$ contains a bridge,
    then any null vector of $\rinc(\dig{G}_F)$
    must contain a zero entry,
    which corresponds to the bridge edge.
    By \Cref{lem: initial soln must be tree soln},
    the facial system $\init_F(\raks_G)$ has no $\CC^*$-zero
    for any nonzero choices of coefficients.
    
    For the second case, as remarked earlier,
    we can assume $G_F$ is both connected and spanning.
    Fixing any weak spanning tree $\dig{T}$ for $\dig{G}_F$,
    we consider the tree solution $\rowv{x}_{\dig{T}}$
    (\Cref{def: tree solution}).
    Let $\rowv{\tau}_1,\ldots,\rowv{\tau}_d$ and
    $\rowv{\eta}_\ell = [\rowv{\tau}_\ell, - \rowv{e}_\ell]$ 
    be those referenced in \Cref{cor: tree solution condition}.
    
    Suppose $(\dig{G}_F,K)$ is balanced with respect to a
    nonzero circulation $\eta = \{ \eta_{ij} \}$.
    Then 
    \begin{align*}
        \prod_{(i,j) \in \edges^+(\dig{C}_\ell)} \frac{k_{ij}}{\eta_{ij}} &= 
        \prod_{(i,j) \in \edges^-(\dig{C}_\ell)} \frac{k_{ij}}{\eta_{ij}} &
        &\text{i.e.,} &
        1 &=
        \prod_{(i,j) \in \edges(\dig{C}_\ell)} 
        \left(
            \frac{k_{ij}}{\eta_{ij}}
        \right)^{\sigma_{ij}}
    \end{align*}
    for each $\ell = 1,\ldots,d$,
    where $\sigma_{ij} \in \{ +1, -1 \}$ indicate the orientation of
    the edge $(i,j)$ in $\dig{C}_\ell$.

    Recall that $\rowv{\eta}_1,\ldots,\rowv{\eta}_d$ spans
    the vector space of circulations.
    So after arranging $\{ \eta_{ij} \}$ into a row vector $\rowv{\eta}$
    according to the edge ordering that is consistent with
    $\rowv{\eta}_1,\ldots,\rowv{\eta}_d$,
    there exist nonzero $\lambda_1,\dots,\lambda_d$ such that
    $
        \rowv{\eta} = \lambda_1 \rowv{\eta}_1 + \cdots \lambda_d \rowv{\eta}_d.
    $
    Combining this and the previous equation, we get
    \[
        1 =
        \prod_{(i,j) \in \edges(\dig{C}_\ell)} 
        \left(
            \frac{k_{ij}}{\eta_{ij}}
        \right)^{\sigma_{ij}}
        =
        \left(
            \lambda_1 \rowv{\eta}_1 + \cdots \lambda_d \rowv{\eta}_d
        \right)^{\colv{\eta}_\ell}
        \rowv{a}(\dig{G}_F)^{-\colv{\eta}}
    \]
    for $\ell = 1,\ldots,d$.
    By \Cref{cor: tree solution condition},
    $\init_F(\raks_{(G,K)})$ has a $\CC^*$-zero,
    and thus the coupling coefficients given by $K$
    is among the exceptional coupling coefficients $\badcoupling(\dig{G}_F)$.

    Conversely, if $K \in \badcoupling(\dig{G}_F)$,
    then, by \Cref{lem: initial soln must be tree soln},
    a $\CC^*$-zeros of $\init_F(\raks_{(G,K)})$
    is given by the tree solution
    $\rowv{x}_{\dig{T}}(\lambda_1,\ldots,\lambda_d)$
    (\Cref{def: tree solution}) for some $\lambda_1,\ldots,\lambda_d \ne 0$.
    Moreover, by \Cref{cor: tree solution condition},
    \[
        1 =
        \left(
            \lambda_1 \rowv{\eta}_1 + \cdots \lambda_d \rowv{\eta}_d
        \right)^{\colv{\eta}_\ell}
        \rowv{a}(\dig{G}_F)^{-\colv{\eta}}
        =
        \prod_{(i,j) \in \edges(\dig{C}_\ell)} 
        \left(
            \frac{k_{ij}}{\eta_{ij}}
        \right)^{\sigma_{ij}},
    \]
    Since $\rowv{x}_{\dig{T}}(\lambda_1,\ldots,\lambda_d) \in (\CC^*)^n$,
    $\lambda_1,\ldots,\lambda_d$ are nonzero,
    and $\rowv{\eta} = \lambda_1 \rowv{\eta}_1 + \cdots \lambda_d \rowv{\eta}_d$
    is a nonzero circulation on $\dig{G}_F$.
    Therefore, $(\dig{G}_F,K)$ is balanced with respect to the
    nonzero circulation $\rowv{\eta}$.
\end{proof}

Although \Cref{thm: circulation condition}
depends on the existence of a circulation,
as \Cref{thm: cycle balancing condition} has shown,
this balancing condition can be simplified
in the case where $G_F$ is a cycle.
We now consider one more case in which a similar simplification can be achieved.

\begin{proposition}\label{prop: shared edge}
    If a facial subdigraph $\dig{G}_F$
    is the graph union of $\dig{C}_1,\ldots,\dig{C}_d$
    whose underlying undirected graphs $C_1,\ldots,C_d$
    are independent cycles of $G_F$ sharing a single edge,
    then $\badcoupling(\dig{G}_F)$ consists of exactly the 
    coupling coefficients for which
    \begin{equation}
        1 = 
        \sum_{\ell = 1}^d
        (-1)^{|C_\ell| / 2}
        \frac{
            \prod_{ (i,j) \in \edges^+(\dig{C}_\ell) } k_{ij}
        }{
            \prod_{ (i,j) \in \edges^-(\dig{C}_\ell) } k_{ij}
        },
    \end{equation}
    where $\edges^+(\dig{C}_\ell)$ and $\edges^-(\dig{C}_\ell)$
    are the set of directed edges of $\dig{C}_\ell$
    in the two possible orientations, respectively,
    and the edge shared by $C_1,\ldots,C_d$ is considered negatively oriented.
\end{proposition}

\begin{proof}
    Recall that the face subgraph $G_F$ must be bipartite
    (\Cref{thm: faces are max bipartite})
    and therefore the cycles $C_1,\ldots,C_d$ must be even.
    
    Suppose $K = \{ k_{ij} \}$ is in $\badcoupling(\dig{G}_F)$,
    then by \Cref{thm: circulation condition}, 
    there exists a circulation $\eta = \{ \eta_{ij} \ne 0 \}$ on $\dig{G}_F$
    satisfying \eqref{equ: balancing of circulation}.
    Let $m_\ell = |C_\ell| - 1$,
    then the space of circulations is spanned by vectors
    \[
        \rowv{\eta}_i \;= 
        \begin{tikzpicture}[decoration=brace,baseline={([yshift=-2ex]current bounding box.center)}]
            \matrix (m) [matrix of math nodes,left delimiter=[,right delimiter={]}] {
                0 & \cdots & 0 &
                \rowv{\tau}_\ell &
                0 & \cdots & 0 &
                -1 \\
            };
            \draw[decorate,thick]
            (m-1-1.north west) -- node[above=2pt] {\scriptsize $\sum_{j=1}^{\ell-1} m_j$} (m-1-3.north east);
            \draw[decorate,thick]
            (m-1-5.north west) -- node[above=2pt] {\scriptsize $\sum_{j=\ell+1}^d m_j$} (m-1-7.north east);
        \end{tikzpicture}
        \hspace{8ex} \text{for } \ell = 1,\ldots,d
    \]
    corresponding to the cycles $C_1,\ldots,C_d$.
    Therefore, when viewed as a vector, $\eta$ can be expressed as
    \[
        \rowv{\eta} = 
        \sum_{i=1}^d \lambda_i \rowv{\eta}_i = 
        \begin{bmatrix}
            \lambda_1 \rowv{\tau}_1 &
            \cdots &
            \lambda_d \rowv{\tau}_d &
            -(\lambda_1 + \cdots + \lambda_d)
        \end{bmatrix}
    \]
    for some $\lambda_i \in \CC^*$ such that $\lambda_1 + \cdots + \lambda_d \ne 0$.
    We can verify that, along each cycle $C_\ell$, we have
    \begin{align*}
        \frac{
            \prod_{(i,j) \in \edges^+(\dig{C}_\ell)} k_{ij}
        }{
            \prod_{(i,j) \in \edges^+(\dig{C}_\ell)} \eta_{ij}
        }
        &=
        \frac{
            \prod_{(i,j) \in \edges^+(\dig{C}_\ell)} k_{ij}
        }{
            \lambda_\ell^{(m_\ell+1)/2}
        }
        &&\text{and} &
        \frac{
            \prod_{(i,j) \in \edges^-(\dig{C}_\ell)} k_{ij}
        }{
            \prod_{(i,j) \in \edges^-(\dig{C}_\ell)} \eta_{ij}
        }
        =
        \frac{
            \prod_{(i,j) \in \edges^-(\dig{C}_\ell)} k_{ij}
        }{
            (-1)^{\frac{m_\ell+1}{2}}
            \lambda_\ell^{\frac{m_\ell-1}{2}}
            (\lambda_1 + \dots + \lambda_d)
        }
    \end{align*}
    the equality between the two
    (equation \eqref{equ: balancing of circulation})
    thus implies that
    \[
        (-1)^{(m_\ell+1)/2}
        \frac{
            \prod_{(i,j) \in \edges^+(\dig{C}_\ell)} k_{ij}
        }{
            \prod_{(i,j) \in \edges^-(\dig{C}_\ell)} k_{ij}
        }
        =
        \frac{
            \lambda_\ell
        }{
            (\lambda_1 + \dots + \lambda_d)
        }
        \quad\text{for } \ell = 1,\ldots,d.
    \]
    Summing over $\ell = 1,\ldots,d$ thus produce the desired equality.

To prove the converse, we adopt the notation
    \begin{align*}
        P_\ell &= \prod_{(i,j) \in \edges^+(\dig{C}_\ell)} k_{ij} &
        N_\ell &= \prod_{(i,j) \in \edges^-(\dig{C}_\ell)} k_{ij} &
        t_\ell &= (-1)^{|C_\ell| / 2} \frac{P_\ell}{N_\ell}
    \end{align*}
    for each $\ell = 1,\ldots,d$.
    Assuming
    $\sum_{\ell =1}^d (-1)^{|C_\ell| \slash 2} \frac{P_\ell}{N_\ell} = t_1 + \cdots + t_d =1$,
    we want to show that
    there is a nonzero circulation $\mathbf{\eta}$ on $\dig{G}$
    that satisfies \eqref{equ: balancing of circulation}.
    Let $e \in \edges(\dig{G})$ be the shared edge,
    which is the only edge shared by the cycles,
    so we have a well-defined assignment $\eta = \{ \eta_{ij} \}$,
    given by
    \[
        \eta_{ij} = \begin{cases}
            t_\ell &\text{if } e_{ij} \in \edges^+(\dig{C}_\ell) \\
            -t_\ell &\text{if } e_{ij} \in \edges^-(\dig{C}_\ell) \setminus \{e\} \\
            -1 &\text{if } (i,j) = e
        \end{cases}
    \]
    which, by assumption, gives a valid nonzero circulation on $\dig{G}$.
    Thus, we can verify \eqref{equ: balancing of circulation} via
    \[
        \prod_{(i,j) \in \edges^+(\dig{C}_\ell)} \frac{k_{ij}}{\eta_{ij}}
        =
        \frac{P_\ell}{t_\ell^{|C_\ell|/2}}
        =
        \frac{P_\ell}{(-1)^{|C_\ell| / 2} \frac{P_\ell}{N_\ell} t_\ell^{|C_\ell|/2-1}}
        =
        \frac{N_\ell}{(-1) (-t_\ell)^{|C_\ell|/2-1}}
        =
        \prod_{(i,j) \in \edges^-(\dig{C}_\ell)} \frac{k_{ij}}{\eta_{ij}}.
    \]
\end{proof}

\begin{example}[Simplified balancing condition for \Cref{fig: figure 8 weight balancing}]\label{ex: 2 4 cycles}
    Through this formulation, the balancing condition
    given in \eqref{equ: figure 8 weight balancing 1}
    for the digraph shown in
    \Cref{fig: figure 8 weight balancing} can be simplified into
    \[
        \frac{ {\color{red} k_{50}} {\color{red} k_{12}} }{ {\color{purple}k_{52}} {\color{red} k_{10}} } +
        \frac{ {\color{blue}k_{54}} {\color{blue}k_{32}} }{ {\color{purple}k_{52}} {\color{blue}k_{34}} } = 1.
    \]
\end{example}

\subsection{Examples of global descriptions of exceptional coupling coefficients}
\label{sec: global balancing conditions}

The previous section gave an explicit description of
the set of exceptional coupling coefficients
with respect to a given face $F$ of $\radjp_G$.
The global description of the exceptional coupling coefficients
$\badcoupling(G)$ can therefore be obtained
as the union of these local contributions from each face.
We conclude this section with concrete descriptions of $\badcoupling(G)$ for families of graphs.
First, note that many faces of $\radjp_G$ have no contribution to
$\badcoupling(G)$.
Combining \Cref{thm: faces are max bipartite} and \Cref{thm: circulation condition},
we get the following topological constraints on the faces with
potentially nontrivial contribution to $\badcoupling(G)$.

\begin{proposition}\label{prop: global exceptional coupling}
    Let $\Phi$ be the set of nontrivial proper faces of $\radjp_G$
    with facial subgraphs that are
    bridgeless and
    maximally bipartite in their induced subgraphs.
    Then
    \[
        \badcoupling(G) = \bigcup_{F \in \Phi} \badcoupling(\dig{G}_F).
    \]
\end{proposition}

From this, we can derive a stronger 
root counting result for networks with no even cycles.

\begin{corollary}
    If $G$ contains no even cycles,
   then $\aks_{(G,K,\colv{w})}$ is Bernshtein-general, and its number of isolated $\CC^*$-zeros,
    counted with multiplicity,
    equals the $\nvol(\radjp_G)$
    for any real or complex natural frequencies $\colv{w}$
    and any real of complex nonzero coupling coefficients
    $K=\{ k_{ij} \}$. 
\end{corollary}

In particular, for odd cycles and trees,
we can use the results from \cite{ChenDavisMehta2018Counting}
to derive an exact root count for \textit{any} Kuramoto network.

\begin{corollary}
    If $G = T_N$ is a tree with $N$ nodes then $\aks_{(G,K,\colv{w})}$ has $2^{N-1}$
    $\CC^*$-zeros, counting multiplicity, for any choices of real or complex natural frequencies $\colv{w}$
    and any choices of real or complex nonzero coupling coefficients
    $K=\{ k_{ij} \}$. 
\end{corollary}

\begin{corollary}
    If $G = C_N$ is an odd cycle then $\aks_{(G,K,\colv{w})}$ has
    $N \binom{N-1}{(N-1) \slash 2}$ $\CC^*$-zeros,
    counting multiplicity,
    for any choices of real or complex natural frequencies $\colv{w}$
    and any choices of real or complex nonzero coupling coefficients
    $K=\{ k_{ij} \}$. 
\end{corollary}

For even cycles, we can explicitly state degeneracy conditions
in terms of balanced networks.

\begin{corollary}\label{thm: unicycle non-bernshtein condition}
    If $G$ is an even cycle, then
    for any real or complex $\colv{w}$,
    the following are equivalent:
    \begin{enumerate}
        \item
            the isolated $\CC^*$-root count of $\aks_{(G,K,\colv{w})}$,
            counted with multiplicity,
            is strictly less than $\nvol(\radjp_G)$;
        \item
            $(G,K)$ has a subnetwork that is balanced
            with respect to weights $K$
            (\Cref{def: weight-wise balancing}).
    \end{enumerate}
\end{corollary}

Note that by \Cref{lem: leaf extension},
any result for cycle graphs extends to \emph{unicycle graphs},
i.e. graphs that contain a unique cycle.

\begin{figure}[h]
    \centering
    \begin{tikzpicture}[
        main/.style = {draw,circle,minimum size=1.5ex},
        every edge/.style = {draw,thick,-latex}
    ] 
        \node[main] (0) at ( 0, 0.8) {$0$}; 
        \node[main] (2) at ( 0,-0.8) {$2$}; 
        \node[main] (1) at ( 1.9, 0) {$1$}; 
        \node[main] (3) at (-1.9, 0) {$3$}; 
        \path (1) edge node[above right] {$k_{10}$} (0);
        \path (1) edge node[below right] {$k_{12}$} (2);
        \path (3) edge node[above left] {$k_{30}$} (0);
        \path (3) edge node[below left] {$k_{32}$} (2);
    \end{tikzpicture} 
    \hspace{3ex}
    \begin{tikzpicture}[
        main/.style = {draw,circle,minimum size=1.5ex},
        every edge/.style = {draw,thick,-latex}
    ] 
        \node[main] (0) at ( 0, 0.8) {$0$}; 
        \node[main] (2) at ( 0,-0.8) {$2$}; 
        \node[main] (1) at ( 1.9, 0) {$1$}; 
        \node[main] (3) at (-1.9, 0) {$3$}; 
        \path (1) edge node[above right] {$k_{10}$} (0);
        \path (1) edge node[below right] {$k_{12}$} (2);
        \path (0) edge node[above left] {$k_{30}$} (3);
        \path (2) edge node[below left] {$k_{32}$} (3);
    \end{tikzpicture}
    \hspace{3ex}
    \begin{tikzpicture}[
        main/.style = {draw,circle,minimum size=1.5ex},
        every edge/.style = {draw,thick,-latex}
    ] 
        \node[main] (0) at ( 0, 0.8) {$0$}; 
        \node[main] (2) at ( 0,-0.8) {$2$}; 
        \node[main] (1) at ( 1.9, 0) {$1$}; 
        \node[main] (3) at (-1.9, 0) {$3$}; 
        \path (0) edge node[above right] {$k_{10}$} (1);
        \path (1) edge node[below right] {$k_{12}$} (2);
        \path (0) edge node[above left] {$k_{30}$} (3);
        \path (3) edge node[below left] {$k_{32}$} (2);
    \end{tikzpicture} 
    \caption{Representatives of the classes of balanced subnetworks in $C_4$}
    \label{fig: C4 classes of facet subgraphs}
\end{figure}
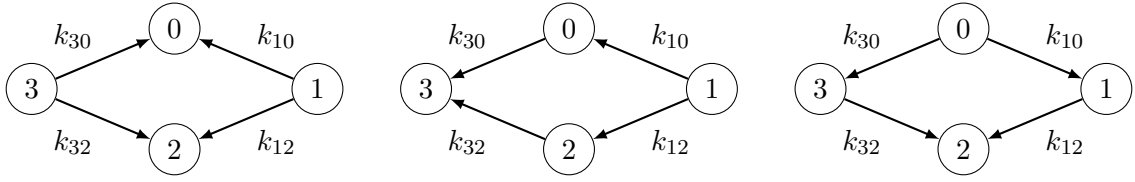
\begin{example}[The 4-cycle case]\label{ex: C4}
    Consider the case of $G = C_4$.
    As shown in \cite[Theorem 16]{ChenDavisMehta2018Counting}
    and \Cref{thm: generic root count},
    for generic choices of $\colv{w}$ and $K$,
    $\aks_G$ has 12 $\CC^*$-zeros.
    This is the adjacency polytope bound.
    With the choice of $\colv{w}$ remain generic,
    we will illustrate the conditions given \Cref{thm: unicycle non-bernshtein condition}.
    $C_4$ has up to 6 balanced subnetworks
    supported by the digraphs shown in \Cref{fig: C4 classes of facet subgraphs}
    and their transposes.
    With these, we have the following decomposition of
    the coupling coefficients:
    \begin{enumerate}[itemsep=1ex]
        \item
            Consider the 1-dimensional ``balancing variety''
            defined by the binomial system
            \begin{equation}\label{equ: C4 balancing variety}
                \left\{
                \begin{aligned}
                    k_{10} k_{12} k_{32}^{-1} k_{30}^{-1} &= 1 \\
                    k_{10} k_{12}^{-1} k_{32}^{-1} k_{30} &= 1 \\
                    k_{10} k_{12}^{-1} k_{32} k_{30}^{-1} &= 1,
                \end{aligned}
                \right.
            \end{equation}
            which will produce all 6 balanced subnetworks.
            It can be parameterized by
            \[
                k_{10} = k_{12} = k_{32} = k_{30} = s,
            \]
            for $s \in \CC^*$.
            Numerical computation shows that
            generic choice of $s \in \CC^*$
            produces an algebraic Kuramoto system $\aks_G$
            with only $6$ $\CC^*$-zeros.
            Moreover, taking $s = 1$
            and
            $w_1 = 1.1, w_2 = - 2.1, w_3 = 1$,
            all six $\CC^*$-zeros can be \emph{real}
            
        \item
            There are three 2-dimensional balancing varieties,
            all containing the 1-dimensional variety defined in
            \eqref{equ: C4 balancing variety},
            which are defined by two out of the three equations
            from this system, i.e.,
            \begin{align*}
                &
                \left\{
                \begin{aligned}
                    k_{10} k_{12} k_{32}^{-1} k_{30}^{-1} &= 1 \\
                    k_{10} k_{12}^{-1} k_{32}^{-1} k_{30} &= 1
                \end{aligned}
                \right.
                &&\text{or} &&
                \left\{
                \begin{aligned}
                    k_{10} k_{12}^{-1} k_{32}^{-1} k_{30} &= 1 \\
                    k_{10} k_{12}^{-1} k_{32} k_{30}^{-1} &= 1,
                \end{aligned}
                \right.
                &&\text{or} &&
                \left\{
                \begin{aligned}
                    k_{10} k_{12} k_{32}^{-1} k_{30}^{-1} &= 1 \\
                    k_{10} k_{12}^{-1} k_{32} k_{30}^{-1} &= 1,
                \end{aligned}
                \right.
            \end{align*}
            each producing 4 balanced subnetworks.
            The first, for example,
            can be parameterized as
            \begin{align*}
                k_{10} &= s &
                k_{12} &= t &
                k_{32} &= \pm t &
                k_{30} &= \pm s &
                \text{for } (s,t) &\in (\CC^*)^2.
            \end{align*}
            The other two can be parameterized similarly.
            Numerical computations suggest that
            generic $(s,t)$ produce algebraic Kuramoto systems
            having 8 solutions.
            Moreover, taking $s = 1, t = -1.001, w_1 = 1.1, w_2 = -2.1, w_3 = 1$ all $8$ solutions can be real. 
            
        \item
            There are three 3-dimensional balancing varieties
            defined by binomial equations
            \begin{align*}
                k_{10} k_{12} k_{32}^{-1} k_{30}^{-1} &= 1 &
                k_{10} k_{12}^{-1} k_{32}^{-1} k_{30} &= 1 &
                k_{10} k_{12}^{-1} k_{32} k_{30}^{-1} &= 1,
            \end{align*}
            each producing 2 balanced subnetworks.
            The first of the three
            can be parameterized by
            \begin{align*}
                k_{10} &= s &
                k_{12} &= u/s &
                k_{32} &= t &
                k_{30} &= u/t ,
            \end{align*}
            for $(s,t,u) \in (\CC^*)^3$.
            The other two can be parameterized similarly.
            Numerical experiments show that 
            a generic choice of coupling coefficients on one of these varieties gives 10 $\CC^*$-zeros.
            Moreover, with $u = 1.01, s = 1, t = - 1.001, w_1 = 1.1, w_2 = -2.1, w_3 = 1$, all zeros are real.
            
        \item
            The remaining choices of $K$,
            form a Zariski-dense subset in the space of all
            coupling coefficients for $C_4$, and
            define an algebraic Kuramoto system with 12 $\CC^*$-zeros.
    \end{enumerate}
    Under the assumption that $\colv{w}$ is generic,
    there are no other possibilities.
    Interestingly, with special choices of $\colv{w}$,
    the same conditions on exceptional coupling coefficients
    will produce positive-dimensional solutions.
    This behavior is explored in \Cref{ex: C4 +dimensional}.
\end{example}

\begin{example}
We now consider a graph $G$ formed as two 4-cycles
sharing one edge as in \Cref{fig: figure 8 weight balancing}.
The geometry of the adjacency polytope for
this and similar graphs was studied in detail by
D'Ali, Delucchi, and Micha\l{}ek \cite{DAliDelucchiMichalek2022Many}
and the related work~\cite{ChenKorchevskaia2019Graph}.
By treating each cycle as a copy of $C_4$,
the equations from \Cref{ex: C4} with respect to the cycles
on vertices $\{v_0,v_1,v_2,v_5\}$ and $\{v_2,v_3,v_4,v_5\}$
apply here as well.
In addition, 9 equations
define other patches of $\badcoupling(G)$:
\begin{align*}
    \frac{ k_{50} k_{12} }{ k_{52} k_{10} } +       
    \frac{ k_{54} k_{32} }{ k_{52} k_{34} } &= 1\,, 
    &
    \frac{ k_{50} k_{12} }{ k_{52} k_{10} } +       
    \frac{ k_{43} k_{32} }{ k_{52} k_{45} } &= 1\,, 
    &
    \frac{ k_{50} k_{12} }{ k_{52} k_{10} } +       
    \frac{ k_{54} k_{43} }{ k_{52} k_{23} } &= 1\,, 
    \\
    \frac{ k_{01} k_{12} }{ k_{52} k_{05} } +       
    \frac{ k_{54} k_{32} }{ k_{52} k_{34} } &= 1\,, 
    &
    \frac{ k_{01} k_{12} }{ k_{52} k_{05} } +       
    \frac{ k_{43} k_{32} }{ k_{52} k_{45} } &= 1\,, 
    &
    \frac{ k_{01} k_{12} }{ k_{52} k_{05} } +       
    \frac{ k_{54} k_{43} }{ k_{52} k_{23} } &= 1\,, 
    \\
    \frac{ k_{01} k_{50} }{ k_{52} k_{12} } +       
    \frac{ k_{54} k_{32} }{ k_{52} k_{34} } &= 1\,, 
    &
    \frac{ k_{01} k_{50} }{ k_{52} k_{12} } +       
    \frac{ k_{43} k_{32} }{ k_{52} k_{45} } &= 1\,, 
    &
    \frac{ k_{01} k_{50} }{ k_{52} k_{12} } +       
    \frac{ k_{54} k_{43} }{ k_{52} k_{23} } &= 1. 
\end{align*}
They corresponds to the 9 transpose-pairs of
maximally bipartite balanced digraphs in $G$, and each defines a subset
of exceptional coupling coefficients of co-dimension 1.

\end{example}

\section{Positive-dimensional solutions for homogeneous networks}\label{sec: +dimensional}

The previous section explored structure of the
exceptional coupling coefficients.
We now turn our attention to the interesting role played by
the natural frequencies $\colv{w}$.
Indeed positive-dimensional zero sets for the algebraic Kuramoto equations
that can appear when both $K$ and $\colv{w}$ are chosen to be special values.
They represent synchronization configurations
that have at least one degree of freedom
(even after fixing a rotational frame).
In this section, we add to the existing literature on constructions of
positive-dimensional zero sets for the Kuramoto equations
\cite{AshwinBickBurylko2016Identical,coss2018locating,harrington2023kuramoto, LindbergZachariahBostonLesieutre2022Distribution,sclosa2022kuramoto}
and characterize conditions under which they arise.
We focus on the most widely studied case of homogeneous oscillators,
i.e., $\colv{w} = w \colv{1}$ for some real or complex constant $w$.
They correspond to Kuramoto systems \eqref{equ: algebraic kuramoto}
with zero constant terms.
Therefore, without loss of generality we will fix $w_i = 0$ for each $i=0,\ldots,n$
in this section and use the notation $(G,K,\colv{0})$
for a Kuramoto network of homogeneous oscillators.

It is worth reiterating that positive-dimensional $\CC^*$-zero sets
for the algebraic Kuramoto system can only occur when
when the vector of coupling coefficients is in
the set of exceptional coupling coefficients
and hence the system is not Bernshtein-general.

\begin{corollary}
    For a connected graph $G$,
    if the $\CC^*$-zero set of the algebraic Kuramoto system $\aks_{(G,K)}$ 
    is positive-dimensional,
    then $K \in \badcoupling(G)$.
\end{corollary}

Under the assumption that the oscillators are homogeneous,
we now study the additional topological and algebraic conditions
that guarantee the existence of positive-dimensional zero sets.

We start our investigation with bipartite networks
for which positive-dimensional solutions can be constructed
as restrictions of tree solutions (\Cref{def: tree solution})
derived from certain facet subdigraphs.
This is because when $G$ is bipartite,
\Cref{thm: faces are max bipartite} gives that $G$ itself is a facet subgraph,
i.e., there exists a $F \in \facets(\radjp_G)$ such that $G = G_F$.
In this case,
$\radjp_G = \{ 0 \} \cup F \cup (-F)$.
That is, $F$ and $-F$ contain the exponent vectors of all terms,
and then the algebraic Kuramoto system can be expressed in the simple form
\begin{equation}\label{equ: bipartite facet split}
    \aks_G(\rowv{x})
    = 
    -
    \rinc(\dig{G}_F) ( \rowv{x}^{ \rinc(\dig{G}_F) } \circ \rowv{a}(G))^\top
    +
    \rinc(\dig{G}_F) ( \rowv{x}^{-\rinc(\dig{G}_F) } \circ \rowv{a}(G))^\top.
\end{equation}
Note that the two terms are the circulation forms of
$\init_F (\raks_G)$ and $\init_{-F} (\raks_G)$, respectively.

In the previous section,
we constructed tree solutions (\Cref{def: tree solution})
for which the first term of \eqref{equ: bipartite facet split} vanishes
under the assumption that $K \in \badcoupling(G)$.
Our strategy now is to impose additional
topological and algebraic conditions
so that the second term of \eqref{equ: bipartite facet split} also vanishes,
and then the tree solution parameterizes
a positive-dimensional solution for the entire system.
Of course, this strategy is unlikely going to
parameterize the entire positive-dimensional solution set.
But, as we will demonstrate, it is applicable for
a surprisingly large family of graphs.

\subsection{Cycles}

We first consider cycle networks.
In this context, \Cref{thm: unicycle non-bernshtein condition} can be paraphrased
as the following necessary condition for the existence of
positive-dimensional zero sets.

\begin{proposition}\label{thm: unicycle +dimensional necessary}
    Let $G = C_N$.
    If, for a choice of $K$ and $\colv{w}$,
    the $\CC^*$-zero sets of $\aks_{(G,K,\colv{w})}$
    is positive-dimensional, then $N$ is even and
    $(G,K)$ contains a balanced subnetwork
    (\Cref{def: weight-wise balancing}).
\end{proposition}

We now show that with the additional assumptions that
the oscillators are homogeneous
and the coupling coefficients on edges are among
$\{ \pm k \}$ for a $k \ne 0$,
the converse is also true.

\begin{proposition}[Non-isolated $\CC^*$-zero set for even cycles]\label{thm: unicycle +dimensional}
    Let $G=C_N$ for an even $N > 2$.
    Then there exists a choice of coupling coefficients $K$
    for which the algebraic Kuramoto system $\aks_{(G,K,w \colv{1})}$
    has infinitely many $\CC^*$-solutions.
    Specifically, for a fixed $k \in \CC^*$ any choice of
    $k_{ij} \in \{ \pm k \}$ with
    \begin{itemize}
        \item an even number of negative choices if $N \equiv 0 \mod 4$,
        \item an odd  number of negative choices if $N \equiv 2 \mod 4$,
    \end{itemize}
    the $\aks_{(G,K,\colv{0})}$
    derived from a homogeneous network
    has a positive-dimensional $\CC^*$-zero set,
    which contains the image of 
    $\rowv{x}_{\dig{T}}$ (\Cref{def: tree solution}),
    for $\dig{T} < G$.
    Moreover, if $k \in \RR \setminus \{0\}$,
    the corresponding transcendental Kuramoto system
    has a positive-dimensional real zero set.
\end{proposition}

Note that the conditions given here can be stated more succinctly:
It is equivalent to saying $G$ contains a balanced subnetwork
and the coupling coefficients are among $\{ \pm k \}$.

\begin{proof}
    Let $\dig{H}$ be an balanced subdigraph
    (\Cref{def: edge-wise balancing}) of $C_N$.
    Then the assumption ensures that $\dig{H}$
    is balanced with respect to $K$.

    Fix any weak spanning tree $\dig{T}$ of $\dig{H}$
    and let $\rowv{x}_{\dig{T}}$ be the induced
    tree solution defined in \Cref{def: tree solution}.
    With a straightforward calculation, we verify 
    \begin{equation*}
        \rinc(\dig{H}) \left(
            \rowv{x}_{\dig{T}}(\lambda)^{-\rinc(\dig{H})} \circ \rowv{a}(\dig{H})
        \right)^\top
        =
        \rinc(\dig{H}) \left(
            (\lambda \cdot \rowv{\eta} \circ \rowv{a}(\dig{H}))^{-I}
            \circ \rowv{a}(\dig{H})
        \right)^\top
        =
        \rinc(\dig{H}) \left(
            - \frac{ k^2 }{ 4 \lambda }
            \cdot
            \rowv{\eta}
        \right)^\top
        =
        \colv{0}.
    \end{equation*}
    Combined with the calculation above, we see that
    $\aks_G(\rowv{x}(\lambda)) = \colv{0}$ for all $\lambda \in \CC^*$
    and thus the $\CC^*$-zero set of $\aks_G = \colv{0}$ is positive-dimensional.

    Finally, by restricting $\lambda \in \CC^*$ to the unit circle $S^1$,
    and $k \in \mathbb{R}$,
    the $\CC^*$-orbit constructed here produces a
    1-dimensional real zero set to the original transcendental Kuramoto system
    \eqref{equ: kuramoto sin}.
\end{proof}

\begin{example}[4-cycle, again]\label{ex: C4 +dimensional}
    For $G = C_4$, as noted in \Cref{ex: C4},
    there can be as many as 6 balanced subnetworks,
    depending on the choices of coupling coefficients.
    They come in three pairs corresponding to reversing the orientation of all arcs.
    A representative of each pair is shown in \Cref{fig: C4 classes of facet subgraphs}.
    Each pair of balanced subnetworks
    produces an one-dimensional $\CC^*$-zero set of $\aks_G$
    through the formula given in \Cref{thm: unicycle +dimensional}.
    
    \begin{enumerate}[topsep=0pt,itemsep=0.75ex,partopsep=0ex,parsep=0ex]
    \item 
    Consider the first subdigraph $\dig{H}_1$ in \Cref{fig: C4 classes of facet subgraphs}.
    It is balanced if $k_{10} k_{32} = k_{12} k_{30}$.
    Then with the choice of $\dig{T}_1$ having arcs $(1,0), (1,2), (3,0)$,
    we can compute
    \begin{align*}
        \rinc(\dig{T}_1) &=
        \left[
        \begin{smallmatrix}
            +1 & +1 &  0 \\
             0 & -1 &  0 \\
             0 &  0 & +1
        \end{smallmatrix}
        \right]
        &
        \rinc(\dig{T}_1)^{-1} &=
        \left[
        \begin{smallmatrix}
            +1 & +1 &  0 \\
             0 & -1 &  0 \\
             0 &  0 & +1
        \end{smallmatrix}
        \right]
        &
        \rowv{\eta}_{\dig{T}_1} &=
        \begin{bmatrix}
            -1 & +1 & +1
        \end{bmatrix}
        ,
    \end{align*}
    where the notation is as in \Cref{thm: unicycle +dimensional}.
    Then the function $\rowv{x} : \CC^* \to (\CC^*)^3$, given by
    \begin{align*}
        \rowv{x}(\lambda) &=
        (
            \lambda \cdot \rowv{\eta}_{\dig{T}_1} \circ 
            \begin{bmatrix}
                a_{10} & a_{12} & a_{30}
            \end{bmatrix}^{-I}
        )^{ \rinc(\dig{T}_1)^{-1} }
        =
        \begin{bmatrix}
            -\frac{2\imag \lambda}{k_{10}} & 
            -\frac{k_{12}}{k_{10}} &
            \frac{2\imag \lambda}{k_{30}}
        \end{bmatrix}
    \end{align*}
    has image inside the $\CC^*$-zero set of $\aks_{C_4}$.
    In other words, the function
    \[
        \rowv{x}(\lambda)
        =
        \begin{bmatrix}
             -2\imag \lambda / k_{10} &
             -k_{12} / k_{10} &
             +2\imag \lambda / k_{30}
        \end{bmatrix}
        \quad\quad\text{if}\quad
        k_{10} k_{32} = k_{12} k_{30}
    \]
    parameterizes the open part of an one-dimensional orbit
    in the $\CC^*$-zero set of $\aks_{C_4}$.
    In the special case when $k_{ij} = 1$,
    this one-dimensional orbit is \emph{balanced}
    in the sense of \cite[Definition 3.1]{sclosa2022kuramoto}
    and \cite{sclosa2022kuramoto} provides analysis of the stability and geometry of such orbits.
    
    \item
    The second subnetwork $\dig{H}_2$
    in \Cref{fig: C4 classes of facet subgraphs}
    is balanced if $k_{10} k_{30} = k_{12} k_{23}$.
    With $\dig{T}_2 < \dig{H}_2$ given by arcs
    $(1,0), (1,2), (0,3)$ and  by following the construction above,
    we have 
    \[
        \rowv{x}(\lambda)
        =
        \begin{bmatrix}
             + 2\imag \lambda / k_{10} &
             - k_{12} / k_{10} &
             + k_{03} / 2\imag \lambda
        \end{bmatrix}
        \quad\quad\text{if}\quad
        k_{10} k_{30} = k_{12} k_{23}
    \]
    whose image is in the $\CC^*$-zero set of $\aks_{C_4}$
    for any $\lambda \in \CC^*$.
    Note that even in the special case of $k_{ij} = 1$,
    this orbit is \emph{not} balanced
    (in the sense of \cite[Definition 3.1]{sclosa2022kuramoto})
    for $\lambda \ne 1$.
    
    \item
    Finally, the third subnetwork $\dig{H}_3$
    in \Cref{fig: C4 classes of facet subgraphs}
    is balanced if $k_{10} k_{12} = k_{32} k_{03}$.
    With the choice $\dig{T}_3 < \dig{H}_3$,
    and following the construction above,
    we have 
    \[
        \rowv{x}(\lambda)
        =
        \begin{bmatrix}
             + k_{01} / 2\imag \lambda &
             - k_{01} k_{12} / 4 \lambda^2 &
             - k_{03} / 2\imag \lambda
        \end{bmatrix}
        \quad\text{if}\quad
        k_{10} k_{12} = k_{32} k_{30}
    \]
    whose image is in the $\CC^*$-zero set of $\aks_{C_4}$
    for any $\lambda \in \CC^*$.
    This orbit is more interesting as all three complex phase variables
    $x_1,x_2,x_3$ are nonconstant relative to the reference phase $x_0 = 1$.
    It is also \emph{not} balanced (in the sense of \cite[Definition 3.1]{sclosa2022kuramoto}).
    \end{enumerate}
    
    Moreover, if the coupling coefficients satisfy the condition
    $\rowv{k}^2(C_4) = c^2 \cdot \rowv{1}$ for some $c \in \RR$,
    then each of the three $\CC^*$-orbits contains 
    a one-real-dimensional components in the real torus $(S^1)^3$.
    Indeed, by the restriction $\lambda = \frac{c e^{\imag t}}{2 \imag}$,
    the three orbits constructed above reduce to parameterized zero sets
    of one real-dimension inside the real torus.
    If we define
    \[
        \sigma^{ij}_{\ell m} =
        \begin{cases}
            0 &\text{if } k_{ij}/k_{\ell m} > 0 \\
            \pi &\text{otherwise},
        \end{cases}
        \quad\text{and}\quad
        \sigma^{ij} =
        \begin{cases}
            0 &\text{if } k_{ij} > 0 \\
            \pi &\text{otherwise},
        \end{cases}
    \]
    then the three potential real orbits can be expressed as
    {\small
    \begin{align*}
        &
        \left\{
        \begin{aligned}
            \theta_1 &= t + \pi + \sigma^{10} \\
            \theta_2 &= \pi + \sigma^{12}_{10} \\
            \theta_3 &= t + \sigma^{30} \\
        \end{aligned}
        \right.
        \text{if } \ \frac{k_{10} k_{32}}{k_{12} k_{30}} = 1,
        &&
        \left\{
        \begin{aligned}
            \theta_1 &= \sigma^{10} + t \\
            \theta_2 &= \pi + \sigma^{21}_{10} \\
            \theta_3 &= \sigma^{30} - t
        \end{aligned}
        \right.
        \text{ if } \
        \frac{k_{10} k_{30}}{ k_{12} k_{23} } = 1,
        &&
        \left\{
        \begin{aligned}
            \theta_1 &= \sigma^{10} - t \\
            \theta_2 &= \sigma^{21}_{10} -2t \\
            \theta_3 &= \pi + \sigma^{30} -t \\
        \end{aligned}
        \right.
        \text{if } \
        \frac{ k_{10} k_{12} }{ k_{32} k_{30} } = 1,
    \end{align*}
    }%
    with $\theta_0 = 0$.
    It is easy to see if $k_{ij}$'s are identical,
    then all three real orbits exist.
    Moreover, these three orbits intersects at a singular point
    $(\theta_0,\theta_1,\theta_2,\theta_3) = (0, \pi/2, \pi, \pi/2)$.
    These positive-dimensional solution sets have been studied in~\cite{harrington2023kuramoto, LindbergZachariahBostonLesieutre2022Distribution,sclosa2022kuramoto}.
    In particular, \cite[Section 5.2]{sclosa2022kuramoto} provides a topological analysis for the orbits.
    In \Cref{ex: C4 +dimensional}, we showed they can also be derived systematically
    from balanced subnetworks.
\end{example}

\subsection{Multiple even cycles sharing one edge}

We now show that $\aks_G$ can have positive-dimensional
zero sets if $G$ is a graph consisting of
multiple even cycles sharing a single edge
(e.g., \Cref{fig: 8 again}).

\begin{proposition}
    Suppose $G$ consists of $d$ independent even cycles
    $C_1,\ldots,C_d$ that share a single edge $e$,
    then with the choice of the coupling coefficients
    \[
        k_{ij} =
        \begin{cases}
            sd &\text{if } \{i,j\} = e \\
            s  &\text{otherwise},
        \end{cases}
    \]
    for any $s \in \CC^*$,
    and homogeneous natural frequencies $\colv{w} = w \cdot \colv{1}$
    for any $w \in \CC^*$,
    the algebraic Kuramoto system $\aks_G$
    derived from the network $(G,K,w \cdot \colv{1})$
    has a positive-dimensional $\CC^*$-zero set.
\end{proposition}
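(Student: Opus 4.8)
The plan is to imitate the proof of \Cref{thm: unicycle +dimensional}: produce a one–parameter $\CC^*$-orbit on which \emph{both} cycle-form facial systems $\init_F(\raks_G)$ and $\init_{-F}(\raks_G)$ vanish. Since each $C_i$ is even, $G$ is bipartite (two-colour the nodes by the parity of their distance to an endpoint of $e$), so by \Cref{thm: faces are max bipartite} there is a facet $F\in\facets(\radjp_G)$ with $G_F=G$ and $\radjp_G=\{\colv{0}\}\cup F\cup(-F)$. For the homogeneous network $(G,K,w\,\colv{1})$ all constant terms of $\aks_G$ are zero, so
\[
    \aks_G(\rowv{x}) = \rinc(\dig{G}_F)\bigl(\rowv{x}^{\rinc(\dig{G}_F)}\circ\rowv{a}(G)\bigr)^\top - \rinc(\dig{G}_F)\bigl(\rowv{x}^{-\rinc(\dig{G}_F)}\circ\rowv{a}(G)\bigr)^\top ,
\]
and it suffices to find $\rowv{x}(\lambda)$ annihilating each of the two terms. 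Note that $G$ is exactly the union of the $C_i$, so no leaf reduction via \Cref{lem: leaf extension} is needed.

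First I would kill the first term using \Cref{thm: facial cycle condition} together with the explicit orbit construction recorded in \Cref{rmk: C*-orbit}. For the chosen coupling, every primitive cycle vector $\colv{\eta}_i$ of the fundamental cycle $C_i$ satisfies $\rowv{k}(\dig{G}_F)^{\colv{\eta}_i}=s\cdot(sd)^{-1}=1/d$, because (as in the proof of \Cref{prop: shared edge}) $\colv{\eta}_i$ has entry sum $1$ over the arcs of $C_i\setminus\{e\}$ (all of coupling $s$) and entry $-1$ on $e$ (of coupling $sd$). Using the parametrisation of \Cref{rmk: exceptional parametrization} with an appropriate choice of the scalars $\lambda_1,\dots,\lambda_d$ (so that $\rowv{\eta}=\sum_j\lambda_j\colv{\eta}_j$ has $\rowv{\eta}^{\colv{\eta}_i}=1/d$ for all $i$ — here the cycle lengths enter only through signs $(-1)^{|C_i|/2}$, and the value $sd$ on $e$ is exactly what makes the denominator $d$ appear), we get $\rowv{k}(\dig{G}_F)\in\mathcal{K}(\dig{G}_F)$, and the construction in the proof of \Cref{thm: facial cycle condition} yields $\rowv{x}(\lambda)=(\lambda\,\rowv{\eta}_{\dig{T}}\circ\rowv{a}(\dig{T})^{-I})^{\rinc(\dig{T})^{-1}}$ for a spanning tree $T$ of $G$, with $\rowv{x}(\lambda)^{\rinc(\dig{G}_F)}\circ\rowv{a}(G)=\lambda\,\rowv{\eta}$; so by \Cref{lem: face system} the first term vanishes identically in $\lambda$.

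The crux is the second term. Since $\rowv{x}(\lambda)^{\rinc(\dig{G}_F)}\circ\rowv{a}(G)=\lambda\rowv{\eta}$, entrywise inversion gives $\rowv{x}(\lambda)^{-\rinc(\dig{G}_F)}\circ\rowv{a}(G)=\lambda^{-1}\,\rowv{\eta}^{-I}\circ\rowv{a}(G)^{2}$, and the plan is to verify that this vector again lies in $\ker\rinc(\dig{G}_F)=\operatorname{span}\{\colv{\eta}_1,\dots,\colv{\eta}_d\}$; by \Cref{lem: face system} that makes the second term vanish too. Because $\rowv{a}(G)^2$ equals $-\tfrac{s^2}{4}$ on the arcs of every $C_i\setminus\{e\}$ and $-\tfrac{s^2d^2}{4}$ on $e$, and because $\colv{\eta}_i$ has entries in $\{-1,0,1\}$, a direct computation reduces the membership ``$\rowv{\eta}^{-I}\circ\rowv{a}(G)^2\in\ker\rinc(\dig{G}_F)$'' to the single scalar identity $\bigl(\textstyle\sum_j\lambda_j\bigr)\bigl(\sum_j\lambda_j^{-1}\bigr)=d^2$, which holds for the weights fixed in the previous step. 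Hence $\aks_G(\rowv{x}(\lambda))=\colv{0}$ for all $\lambda\in\CC^*$, so the $\CC^*$-zero set of $\aks_G$ contains the one-dimensional image of $\rowv{x}$, as claimed. The main obstacle is exactly this last computation: tracking how the squared coupling vector interacts with the spanning-tree parametrisation and with the block structure of $\rinc(\dig{G}_F)$ induced by the $d$ cycles through $e$ — and this is where the value $k_e=sd$ is indispensable, since it is what turns the coordinate equation attached to $e$ into the clean identity above.
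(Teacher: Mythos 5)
Your proposal follows essentially the same route as the paper's proof: both use bipartiteness to realize $G$ as a facet subgraph $G_F$ and write $\aks_G$ as the difference of the two cycle-form facial systems $\init_{F}(\raks_G)$ and $\init_{-F}(\raks_G)$; both kill the first term with the orbit $\rowv{x}(\lambda) = (\lambda\,\rowv{\eta}_{\dig{T}}\circ\rowv{a}(\dig{T})^{-I})^{\rinc(\dig{T})^{-1}}$ coming from the construction in \Cref{thm: facial cycle condition}; and both kill the second term by a computation in which the squared couplings $-s^2/4$ (and $-s^2d^2/4$ on $e$) recombine with the $\pm 1$ entries (and $\pm d$ entry on $e$) of the kernel vector. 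The only substantive difference is bookkeeping: the paper fixes the kernel witness to be $\rowv{\eta}=\colv{\eta}_1+\cdots+\colv{\eta}_d$ (all weights equal to $1$), for which your identity $(\sum_j\lambda_j)(\sum_j\lambda_j^{-1})=d^2$ is trivially $d\cdot d=d^2$, and then checks the second term entry by entry; you keep the weights $\lambda_j$ free and correctly reduce the second step to that scalar identity.

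The one step you should make explicit is the existence of the ``appropriate $\lambda_1,\ldots,\lambda_d$'' in the first step. By \Cref{prop: shared edge} and \Cref{rmk: exceptional parametrization}, the requirement that $\rowv{\eta}=\sum_j\lambda_j\colv{\eta}_j$ witness $\rowv{k}\in\mathcal{K}(\dig{G}_F)$ forces $\lambda_i/\sum_j\lambda_j=(-1)^{|C_i|/2}\rowv{k}^{\colv{\eta}_i}=(-1)^{|C_i|/2}/d$; summing over $i$ gives $\sum_i(-1)^{|C_i|/2}=d$, which holds only when every $|C_i|\equiv 0\pmod 4$, and otherwise the system has no solution with $\rowv{\eta}\in(\CC^*)^{|F|}$. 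So the parenthetical ``the cycle lengths enter only through signs'' is hiding a real obstruction rather than dismissing one. To be fair, the paper's own proof glosses over exactly the same point: it never verifies $\rowv{k}^{\colv{\eta}_i}=\rowv{\eta}^{\colv{\eta}_i}$ for its choice $\rowv{\eta}=\sum_j\colv{\eta}_j$ (the worked example uses two $4$-cycles, where the check passes). You have therefore not introduced a gap beyond the paper's, but to make your argument airtight you should either exhibit the weights (take all $\lambda_j=1$ as the paper does and carry out the sign check) or record the resulting restriction on the cycle lengths.
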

\begin{proof}
    Since $G$ is bipartite, by \Cref{thm: faces are max bipartite},
    there exists a facet $F$ of $\radjp_G$ such that $G_F = G$,
    and thus $\radjp_G = \{ \colv{0} \} \cup F \cup (-F)$.
    Consequently,
    \[
        \aks_G(\rowv{x}) =
        \rinc(\dig{G}_F   )( \rowv{x}(\lambda)^{\rinc(\dig{G}_F   )} \circ \rowv{a}(\dig{G}_F)    )^\top +
        \rinc(\dig{G}_{-F})( \rowv{x}(\lambda)^{\rinc(\dig{G}_{-F})} \circ \rowv{a}(\dig{G}_{-F}) )^\top.
    \]
    Let $T$ be a spanning tree of $G$ that contains $e$,
    and let $\dig{T}$ be the corresponding subdigraph such that
    $\dig{T} < \dig{G}_F$.
    Consider the tree solution
        (\Cref{def: tree solution})
        \[
            \rowv{x}_{\dig{T}}(\lambda_1,\ldots,\lambda_d) =
            \left[
                \left( \sum_{k=1}^{d} \lambda_k \rowv{\tau}_k \right) \circ
                \rowv{a}_{\dig{T}}^{-I}
            \right]^{
                \rinc(\dig{T})^{-1}
            }
        \]
        where each vector $\rowv{\tau}_k$ contains coefficients
        with which an edge in $\edges(\dig{H}) \setminus \edges(\dig{T})$
        can be written as a linear combination of edges in $\dig{T}$.
        To prove the existence of positive-dimensional components,
        it is sufficient to restrict our attention to
        the case $\lambda = \lambda_1 = \ldots = \lambda_d$,
        i.e., the nonconstant function
        \[
            \rowv{x}(\lambda) := 
            \rowv{x}_{\dig{T}}(\lambda,\ldots,\lambda) =
            \left[
                \lambda \left( \sum_{k=1}^{d} \rowv{\tau}_k \right) \circ
                \rowv{a}_{\dig{T}}^{-I}
            \right]^{
                \rinc(\dig{T})^{-1}
            } =
            \left[
                \lambda \cdot \rowv{\eta}_{\dig{T}} \circ
                \rowv{a}_{\dig{T}}^{-I}
            \right]^{
                \rinc(\dig{T})^{-1}
            }
        \]
    in which the first $n-1$ entries of $\rowv{\eta}_{\dig{T}}$ are $\pm 1$,
    and its $n$-th entry is $\pm d$.
    By \Cref{prop: shared edge},
    \[
        \rinc(\dig{G}_F)( \rowv{x}(\lambda)^{\rinc(\dig{G}_F)} \circ \rowv{a}(\dig{G}_F) )^\top = \colv{0}
        \quad\text{for any } \lambda \in \CC^*.
    \]
    It remains to show 
    $\rinc(\dig{G}_{-F})( \rowv{x}(\lambda)^{\rinc(\dig{G}_{-F})} \circ \rowv{a}(\dig{G}_{-F}) )^\top = \colv{0}$.
    Since $\rowv{a}(\dig{T}) = \frac{\rowv{k}(\dig T)}{2\imag} = [\;s \; \cdots \; s \; sd\;]$,
    \begin{align*}
        \rowv{x}(\lambda)^{ - \rinc(\dig T) } \circ \rowv{a}(\dig T)
        &=
        (
            \lambda \cdot
            \rowv{\eta}_{\dig{T}} \circ
            \rowv{a}(\dig{T})^{-I}
        )^{-I}
        \circ \rowv{a}(\dig T)
        = 
        \lambda^{-1} 
        \rowv{\eta}_{\dig{T}}^{-I} \circ
        \begin{bmatrix}
            \frac{s^2}{-4} & \cdots & \frac{s^2}{-4} & \frac{s^2 d^2}{-4}
        \end{bmatrix}
        = 
        \frac{s^2}{-4\lambda} \,
        \rowv{\eta}_{\dig{T}}
    \end{align*}
    Moreover, by construction,
    $\rowv{x}(\lambda)^{ \colv{v}_i } \cdot \rowv{a}(e_i) = \lambda u_i$ 
    for $i=1,\ldots,d$.
    Therefore,
    \begin{align*}
        \rowv{x}(\lambda)^{ - \colv{v}_i } \cdot \rowv{a}(e_i)
        &= 
        \lambda^{-1} \rowv{a}^2(e_i) u_i^{-1} 
        =
        \frac{s^2}{-4 \lambda} \, u_i,
    \end{align*}
    since $u_1 \in \{ \pm 1 \}$.
    This shows that
    \begin{align*}
        \rowv{x}(\lambda)^{ \rinc(\dig{G}_{-F}) } \circ \rowv{a}(\dig{G}_{-F})
        &=
        \begin{bmatrix}
            \rowv{x}(\lambda)^{-\rinc(\dig T) } \circ \rowv{a}(\dig T) &
            \rowv{x}(\lambda)^{ - \colv{v}_1 } \cdot \rowv{a}(e_1) &
            \rowv{x}(\lambda)^{ - \colv{v}_2 } \cdot \rowv{a}(e_1)
        \end{bmatrix}
        =
        \frac{s^2}{-4 \lambda} \, \rowv{\eta}.
    \end{align*}
    Consequently, $\aks_G(\rowv{x}(\lambda)) = \colv{0}$ for any $\lambda \in \CC^*$,
    i.e., the $\CC^*$-zero set of $\aks_G$ is positive-dimensional.
\end{proof}

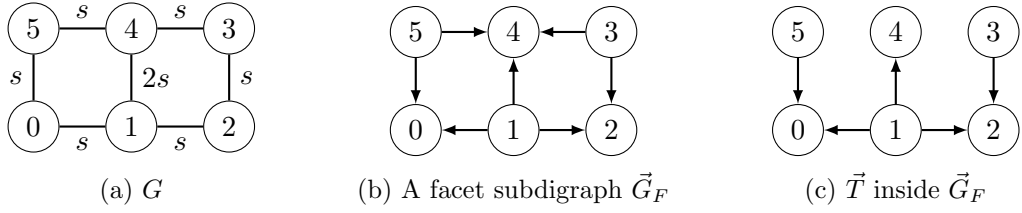
\begin{figure}
    \begin{subfigure}{0.3\textwidth}
        \centering
        \begin{tikzpicture}[
            scale=1.3,
            main/.style = {draw,circle,minimum size=1.5ex},
            every edge/.style = {draw,thick}
        ] 
            \node[main] (0) at ( 0, 0) {$0$}; 
            \node[main] (1) at ( 1, 0) {$1$}; 
            \node[main] (2) at ( 2, 0) {$2$}; 
            \node[main] (3) at ( 2, 1) {$3$}; 
            \node[main] (4) at ( 1, 1) {$4$}; 
            \node[main] (5) at ( 0, 1) {$5$}; 
            \path (0) edge node[below] {$s$} (1);
            \path (1) edge node[below] {$s$} (2);
            \path (2) edge node[right] {$s$} (3);
            \path (3) edge node[above] {$s$} (4);
            \path (4) edge node[above] {$s$} (5);
            \path (5) edge node[left]  {$s$} (0);
            \path (1) edge node[right] {$2s$} (4);
        \end{tikzpicture} 
        \caption{$G$}
        \label{fig: 8 again}
    \end{subfigure}~%
    \begin{subfigure}{0.3\textwidth}
        \centering
        \begin{tikzpicture}[
            scale=1.3,
            main/.style = {draw,circle,minimum size=1.5ex},
            every edge/.style = {draw,thick,-latex}
        ] 
            \node[main] (0) at ( 0, 0) {$0$}; 
            \node[main] (1) at ( 1, 0) {$1$}; 
            \node[main] (2) at ( 2, 0) {$2$}; 
            \node[main] (3) at ( 2, 1) {$3$}; 
            \node[main] (4) at ( 1, 1) {$4$}; 
            \node[main] (5) at ( 0, 1) {$5$}; 
            \path (1) edge (0);
            \path (1) edge (2);
            \path (3) edge (2);
            \path (3) edge (4);
            \path (5) edge (4);
            \path (5) edge (0);
            \path (1) edge (4);
        \end{tikzpicture} 
        \caption{A facet subdigraph $\dig{G}_F$}
        \label{fig: 8 facet again}
    \end{subfigure}~%
    \begin{subfigure}{0.3\textwidth}
        \centering
        \begin{tikzpicture}[
            scale=1.3,
            main/.style = {draw,circle,minimum size=1.5ex},
            every edge/.style = {draw,thick,-latex}
        ] 
            \node[main] (0) at ( 0, 0) {$0$}; 
            \node[main] (1) at ( 1, 0) {$1$}; 
            \node[main] (2) at ( 2, 0) {$2$}; 
            \node[main] (3) at ( 2, 1) {$3$}; 
            \node[main] (4) at ( 1, 1) {$4$}; 
            \node[main] (5) at ( 0, 1) {$5$}; 
            \path (1) edge (0);
            \path (1) edge (2);
            \path (3) edge (2);
            \path (5) edge (0);
            \path (1) edge (4);
        \end{tikzpicture} 
        \caption{$\dig{T}$ inside $\dig{G}_F$}
        \label{fig: 8 T}
    \end{subfigure}
    \caption{
        A network with two independent even cycles.
    }
\end{figure}
\begin{example}
    Consider the network shown in \Cref{fig: 8 again}.
    We fix the facet
    \[
        F = \conv \left\{
            \colv{e}_1,
            \colv{e}_1 - \colv{e}_2,
            \colv{e}_1 - \colv{e}_4,
            \colv{e}_3 - \colv{e}_2,
            \colv{e}_3 - \colv{e}_4,
            \colv{e}_5 - \colv{e}_4,
            \colv{e}_5 
        \right\}
        \subset \RR^5
    \]
    whose facet subdigraph is shown in \Cref{fig: 8 facet again}.
    We also fix a choice of $\dig{T} < \dig{G}_F$ shown in \Cref{fig: 8 T}.
    With this choice and the ordering of the arcs
    $(1,0)$, $(1,2)$, $(3,2)$, $(5,0)$, $(1,4)$, $(3,4)$, $(5,4)$,
    we have
    \begin{align*}
        \rinc(\dig T) &=
        \left[
        \begin{smallmatrix}
             1 &  1 & 0 & 0 & 1 \\
             0 & -1 &-1 & 0 & 0 \\
             0 &  0 & 1 & 0 & 0 \\
             0 &  0 & 0 & 0 &-1 \\
             0 &  0 & 0 & 1 & 0   
        \end{smallmatrix}
        \right]
        ,
        &
        \rinc(\dig T)^{-1} &=
        \left[
        \begin{smallmatrix}
             1 &  1 & 1 & 1 & 0 \\
             0 & -1 &-1 & 0 & 0 \\
             0 &  0 & 1 & 0 & 0 \\
             0 &  0 & 0 & 0 & 1 \\
             0 &  0 & 0 &-1 & 0   
        \end{smallmatrix}
        \right]
        ,
        &
        \colv{\eta}_1 &=
        \left[
        \begin{smallmatrix}
             0 \\
            -1 \\
            +1 \\
             0 \\
            +1 \\
            -1 \\
             0
        \end{smallmatrix}
        \right]
        ,
        &
        \colv{\eta}_2 &=
        \left[
        \begin{smallmatrix}
            -1 \\
             0 \\
             0 \\
            +1 \\
            +1 \\
             0 \\
            -1
        \end{smallmatrix}
        \right].
    \end{align*}
    We also define:
    \begin{align*}
        \rowv{\eta} &= \colv{\eta}_1^\top + \colv{\eta}_2^\top =
        \begin{bmatrix}
            -1 &
            -1 &
            +1 &
            +1 &
            +2 &
            -1 &
            -1
        \end{bmatrix}
        &&\text{and} &
        \rowv{\eta}_T &=
        \begin{bmatrix}
            -1 &
            -1 &
            +1 &
            +1 &
            +2
        \end{bmatrix}.
    \end{align*}
    We then verify that the construction
    $
        \rowv{x}(\lambda) = 
        (
            \lambda \cdot
            \rowv{\eta}_T
            \circ
            \rowv{a}(\dig T)^{-I}
        )^{ \rinc(\dig T)^{-1} }
    $
    above produces
    \begin{align*}
        &
        (
            \lambda
            \begin{bmatrix}
                -1 &
                -1 &
                +1 &
                +1 &
                +2
            \end{bmatrix}
            \circ
            \begin{bmatrix}
                \frac{s}{2\imag} & \frac{s}{2\imag} & \frac{s}{2\imag} & \frac{s}{2\imag} & \frac{2s}{2\imag}
            \end{bmatrix}^{-I}
        )^{
            \rinc(\dig T)^{-1}
        }
        &=
        \begin{bmatrix}
            - \frac{2\imag \lambda}{s} &
            +1 & 
            + \frac{2\imag \lambda}{s} &
            -1 &
            + \frac{2\imag \lambda}{s}
        \end{bmatrix}
        ,
    \end{align*}
    which
    parameterizes a one-dimensional $\CC^*$-zero of $\aks_G$.
    Moreover, by choosing $\lambda(t) = \frac{s e^{\imag t}}{2 \imag}$,
    $\rowv{x} \in (S^1)^5$ for any $t \in \RR$, $\log(\rowv{x})$ produces the one-dimensional real zero set
    \[
        (\theta_0,\theta_1,\theta_2,\theta_3,\theta_4,\theta_5) =
        (
            0, 
            t + \pi,
            0,
            t,
            \pi,
            t
        )
    \]
    for the transcendental Kuramoto system \eqref{equ: kuramoto sin}
    derived from the network in \Cref{fig: 8 again}.
\end{example}

\subsection{Homogeneous networks with uniform coupling coefficients}

We now consider the special case of networks 
with uniform coupling coefficients ($k_{ij}$ are all identical).
Without loss of generality, we fix $k_{ij} = 1$
and use the notation $(G,\rowv{1},\colv{0})$
to denote such a homogeneous and uniform network.
In this case, the conditions for
the exceptionality of coupling coefficients and
the existence of positive-dimensional zero sets
become topological constraints.
We conclude this section by listing a few families of graphs
on which a homogeneous Kuramoto network with uniform coupling coefficients
has a positive-dimensional set of synchronization configurations.

$G = C_{4m}$ for an integer $m \ge 1$
is a sufficient condition for having
a positive-dimensional zero set.
Its existence was first established in \cite{LindbergZachariahBostonLesieutre2022Distribution}. 
From the framework developed in this paper,
we can also construct such a 1-dimensional curve in the solution set 
using \Cref{thm: unicycle +dimensional},
since $(C_{4m},\rowv{1},\colv{0})$ is a balanced cycle.
Interestingly, this particular positive-dimensional zero set,
derived from $C_{4m}$,
can be extended directly to a large family of networks.

\begin{proposition}\label{thm: extending C4m}
    Suppose $G$ contains a cycle $C = C_{4m}$ for an $m \ge 2$.
    Let $V^+ \cup V^- = \nodes(C)$ be the partition of $C$
    as a bipartite graph.
    If any path in $G$ connecting nodes in $V^+$ and $V^-$
    includes at least one edge in $C$ then
    $\aks_{(G,\rowv{1}, \colv{0})}$ has a positive-dimensional $\CC^*$-zero set.
    Moreover, the corresponding transcendental Kuramoto system
    has a positive-dimensional real zero set.
\end{proposition}

\begin{proof}
    Without loss of generality,
    we can assume that node 0 is in $C$
    and $0 \in V^-$.
    According to \Cref{thm: unicycle +dimensional},
    there is a 1-dimensional solution to the subsystem
    $\aks_{(C,\rowv{1},\colv{0})}$ given by
    $$
    	x_i(\lambda) = \begin{cases}
    		\pm \lambda &\text{if } i \in V^+ \\
    		\pm 1.      &\text{if } i \in V^-
    	\end{cases}
    $$
    where the signs depends the choice of the weak spanning tree of $C$
    from which this is derived.

    By assumption,
    each node outside $C$ is either connected to nodes in $V^+$ or $V^-$
    through edges outside $C$.
    We label these two sets of nodes as $U^+$ and $U^-$ respectively,
    and we define
    $$
    	x_i(\lambda) = \begin{cases}
    		\lambda &\text{if } i \in U^+ \\
    		1.      &\text{if } i \in U^-
    	\end{cases}
    $$
    Then in both cases of $i,j \in U^+ \cup V^+$ and
    or $i,j \in U^- \cup V^-$, we have
    $$
    	\frac{x_i}{x_j} - \frac{x_j}{x_i} =
    	(\pm 1) - (\pm 1) = 0.
    $$
    Recall that polynomials $\aks_{(G,\rowv{1},\colv{0})}$
    are sums or differences between
    polynomials in $\aks_{(C,\rowv{1},\colv{0})}$ and
    $\frac{x_i}{x_j} - \frac{x_j}{x_i}$
    for edges $\{i,j\}$ in $\edges (G) \backslash \edges (C)$,
    which, by assumption, must be incident to nodes only
    in $U^+ \cup V^+$ or $U^-\cup V^-$.
    Therefore, 
    $(x_1(\lambda),\ldots, x_n(\lambda))$ defines a curve
    in the $\CC^*$-zero set of $\aks_{(G,\rowv{1},\colv{0})}$. 

    Finally, by restricting $\lambda$ to the unit circle $S^1$,
    i.e., setting $\lambda = e^{\imag \theta}$ for $\theta \in \RR$,
    we get a curve of in the real zero set of the
    transcendental Kuramoto system.
\end{proof}

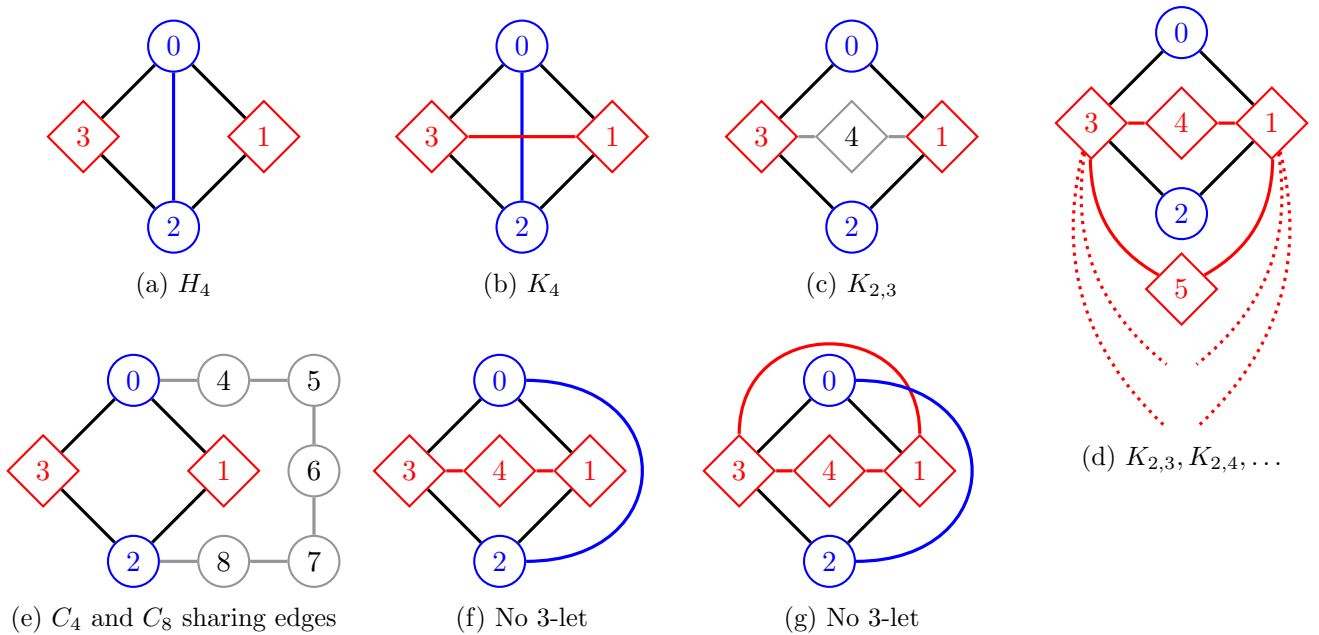
\begin{figure}[t]
    \begin{tabular}{cccc}
        \begin{subfigure}[b]{0.24\textwidth}
            \centering
            \begin{tikzpicture}[
                Negative/.style = {draw=blue,text=blue,thick,circle},
                Positive/.style = {draw=red,text=red,thick,diamond},
                every edge/.style = {draw,very thick},
                scale=0.80
            ] 
                \node[Negative] (0) at ( 0, 1.5) {$0$}; 
                \node[Negative] (2) at ( 0,-1.5) {$2$}; 
                \node[Positive] (1) at ( 1.5, 0) {$1$}; 
                \node[Positive] (3) at (-1.5, 0) {$3$}; 
                \path (1) edge (0);
                \path (1) edge (2);
                \path (3) edge (0);
                \path (2) edge (3);
                \path (0) edge[blue] (2);
            \end{tikzpicture} 
            \caption{$H_4$}
            \label{fig: H4 red blue}
        \end{subfigure}
        &
        \begin{subfigure}[b]{0.24\textwidth}
            \centering
            \begin{tikzpicture}[
                Negative/.style = {draw=blue,text=blue,thick,circle},
                Positive/.style = {draw=red,text=red,thick,diamond},
                every edge/.style = {draw,very thick},
                scale=0.80
            ] 
                \node[Negative] (0) at ( 0, 1.5) {$0$}; 
                \node[Negative] (2) at ( 0,-1.5) {$2$}; 
                \node[Positive] (1) at ( 1.5, 0) {$1$}; 
                \node[Positive] (3) at (-1.5, 0) {$3$}; 
                \path (1) edge (0);
                \path (1) edge (2);
                \path (3) edge (0);
                \path (2) edge (3);
                \path (0) edge[blue] (2);
                \path (1) edge[red] (3);
            \end{tikzpicture} 
            \caption{$K_4$}
            \label{fig: K4 red blue}
        \end{subfigure}
        &
        \begin{subfigure}[b]{0.24\textwidth}
            \centering
            \begin{tikzpicture}[
                Negative/.style = {draw=blue,text=blue,thick,circle},
                Positive/.style = {draw=red,text=red,thick,diamond},
                NegativeLike/.style = {draw=gray,text=black,thick,circle},
                PositiveLike/.style = {draw=gray,text=black,thick,diamond},
                every edge/.style = {draw,very thick},
                scale=0.80
            ] 
                \node[Negative] (0) at ( 0,1.5) {$0$}; 
                \node[Negative] (2) at ( 0,-1.5) {$2$}; 
                \node[Positive] (1) at ( 1.5, 0) {$1$}; 
                \node[Positive] (3) at (-1.5, 0) {$3$}; 
                \node[PositiveLike] (4) at ( 0,0) {$4$}; 
                \path (1) edge (0);
                \path (1) edge (2);
                \path (3) edge (0);
                \path (2) edge (3);
                \path (1) edge[gray] (4);
                \path (3) edge[gray] (4);
            \end{tikzpicture} 
            \caption{$K_{2,3}$}
            \label{fig: K23 red blue}
        \end{subfigure}
        &
        \multirow[t]{2}{*}[-10ex]{%
            \begin{subfigure}[t]{0.24\textwidth}
            \centering
            \begin{tikzpicture}[
                Negative/.style = {draw=blue,text=blue,thick,circle},
                Positive/.style = {draw=red,text=red,thick,diamond},
                every edge/.style = {draw,very thick},
                scale=0.8
            ] 
                \node[Negative] (0) at ( 0,1.5) {$0$}; 
                \node[Negative] (2) at ( 0,-1.5) {$2$}; 
                \node[Positive] (1) at ( 1.5, 0) {$1$}; 
                \node[Positive] (3) at (-1.5, 0) {$3$}; 
                \node[Positive] (4) at ( 0,0) {$4$}; 
                \node[Positive] (5) at ( 0,-2.75) {$5$}; 
                \path (1) edge (0);
                \path (1) edge (2);
                \path (3) edge (0);
                \path (2) edge (3);
                \path (1) edge[red] (4);
                \path (3) edge[red] (4);
                \path (1) edge[red,bend left] (5);
                \path (3) edge[red,bend right] (5);
                \path (1) edge[red,bend left,dotted]  ( 0.25,-4);
                \path (3) edge[red,bend right,dotted] (-0.25,-4);
                \path (1) edge[red,bend left,dotted]  ( 0.30,-5);
                \path (3) edge[red,bend right,dotted] (-0.30,-5);
            \end{tikzpicture} 
            \caption{$K_{2,3},K_{2,4},\ldots$}
            \label{fig: K2m}
        \end{subfigure}%
        }
        \\
        \begin{subfigure}[b]{0.27\textwidth}
            \centering
            \begin{tikzpicture}[
                Negative/.style = {draw=blue,text=blue,thick,circle},
                Positive/.style = {draw=red,text=red,thick,diamond},
                NegativeLike/.style = {draw=gray,text=black,thick,circle},
                PositiveLike/.style = {draw=gray,text=black,thick,diamond},
                every edge/.style = {draw,very thick},
                scale=0.80
            ] 
                \node[Negative] (0) at ( 0, 1.5) {$0$}; 
                \node[Negative] (2) at ( 0,-1.5) {$2$}; 
                \node[Positive] (1) at ( 1.5, 0) {$1$}; 
                \node[Positive] (3) at (-1.5, 0) {$3$}; 
                \node[NegativeLike] (4) at (1.5,1.5) {$4$};
                \node[NegativeLike] (5) at (3.0,1.5) {$5$};
                \node[NegativeLike] (6) at (3.0,0.0) {$6$};
                \node[NegativeLike] (7) at (3.0,-1.5) {$7$};
                \node[NegativeLike] (8) at (1.5,-1.5) {$8$};
                \path (1) edge (0);
                \path (1) edge (2);
                \path (3) edge (0);
                \path (3) edge (2);
                \path (0) edge[gray] (4);
                \path (4) edge[gray] (5);
                \path (5) edge[gray] (6);
                \path (6) edge[gray] (7);
                \path (7) edge[gray] (8);
                \path (8) edge[gray] (2);
            \end{tikzpicture} 
            \caption{$C_4$ and $C_8$ sharing edges}
            \label{fig: C4+C8 red blue}
        \end{subfigure}
        &
        \begin{subfigure}[b]{0.24\textwidth}
            \centering
            \begin{tikzpicture}[
                Negative/.style = {draw=blue,text=blue,thick,circle},
                Positive/.style = {draw=red,text=red,thick,diamond},
                every edge/.style = {draw,very thick},
                scale=0.80
            ] 
                \node[Negative] (0) at ( 0, 1.5) {$0$}; 
                \node[Negative] (2) at ( 0,-1.5) {$2$}; 
                \node[Positive] (1) at ( 1.5, 0) {$1$}; 
                \node[Positive] (3) at (-1.5, 0) {$3$}; 
                \node[Positive] (4) at ( 0,0) {$4$}; 
                \path (1) edge (0);
                \path (1) edge (2);
                \path (3) edge (0);
                \path (2) edge (3);
                \path (1) edge[red] (4);
                \path (3) edge[red] (4);
                \path (0) edge[blue,bend left=90,looseness=2.2] (2);
            \end{tikzpicture} 
            \caption{No 3-let}
            \label{fig: K23 relative 1}
        \end{subfigure}
        &
        \begin{subfigure}[b]{0.24\textwidth}
            \centering
            \begin{tikzpicture}[
                Negative/.style = {draw=blue,text=blue,thick,circle},
                Positive/.style = {draw=red,text=red,thick,diamond},
                every edge/.style = {draw,very thick},
                scale=0.80
            ] 
                \node[Negative] (0) at ( 0, 1.5) {$0$}; 
                \node[Negative] (2) at ( 0,-1.5) {$2$}; 
                \node[Positive] (1) at ( 1.5, 0) {$1$}; 
                \node[Positive] (3) at (-1.5, 0) {$3$}; 
                \node[Positive] (4) at ( 0,0) {$4$}; 
                \path (1) edge (0);
                \path (1) edge (2);
                \path (3) edge (0);
                \path (2) edge (3);
                \path (1) edge[red] (4);
                \path (3) edge[red] (4);
                \path (3) edge[red,bend left=90,looseness=1.7] (1);
                \path (0) edge[blue,bend left=90,looseness=2.2] (2);
            \end{tikzpicture} 
            \caption{No 3-let}
            \label{fig: K23 relative 2}
        \end{subfigure}
        &
    \end{tabular}
    \caption{
        Examples of homogeneous and uniform networks
        into which a solution curve derived from $C_4$ can be lifted
        according to \Cref{thm: extending C4m}.
        Nodes in $V^+, V^-, U^+, U^-$ are represented by
        red diamonds, blue circles 
        gray diamonds, and gray circles respectively.
    }
    \label{fig: C4 relatives}
\end{figure}

\begin{example}[Extending $C_4$ solutions]
    With this result, we can extend the 1-dimensional $\CC^*$-zero set
    for the Kuramoto system derived from
    a cycle network of 4 homogeneous oscillators
    with uniform coupling coefficients
    directly to other networks.
    \Cref{fig: C4 relatives}
    shows some of the examples of networks
    to which such extension is possible.
    This result gives us 1-dimensional curves
    \begin{align*}
        \rowv{x}_{\text{(a)}}(\lambda) &=
        \begin{bmatrix}
            -\lambda & -1 & \lambda
        \end{bmatrix}
        &
        \rowv{x}_{\text{(b)}}(\lambda) &=
        \begin{bmatrix}
            -\lambda & -1 & \lambda
        \end{bmatrix}
        \\
        \rowv{x}_{\text{(c)}}(\lambda) &=
        \begin{bmatrix}
            -\lambda & -1 & \lambda
            & \lambda
        \end{bmatrix}
        &
        \rowv{x}_{\text{(e)}}(\lambda) &=
        \begin{bmatrix}
            -\lambda & -1 & \lambda
            & 1 & 1 & 1 & 1 & 1
        \end{bmatrix}
    \end{align*}
    in their respective $\CC^*$-zero sets
    (as well as real zero sets via the restriction $\lambda = e^{\imag \theta}$).
    This shows that the respective system have
    positive-dimensional zero sets.
\end{example}

The graph $K_{2,3}$ in \Cref{fig: K23 red blue}
is a complete bipartite graph,
which consists of all edges between two sets of nodes
($\{1,3\}$ and $\{0,2,4\}$, in this case).
The construction in \Cref{thm: extending C4m}
extends trivially to $K_{2,m}$ for $m \ge 2$,
since $K_{2,m}$ can be created from $C_4$ by
adding nodes $4,\ldots,m+1$ and edges
$\{1,j\}$ and $\{3,j\}$ for $j=5,\ldots$,
as in \Cref{fig: K2m}.

\begin{corollary}\label{thm: K2m +dimensional}
    For any $m \ge 2$, $\aks_{(K_{2,m},\rowv{1}, \colv{0})}$
    has a positive-dimensional $\CC^*$-zero set,
    and the corresponding transcendental Kuramoto system
    has a positive-dimensional real zero set. 
\end{corollary}

\begin{example}[Beyond 3-lets]
    The example $K_{2,3}$ (\Cref{fig: K23 red blue})
    is noteworthy for another reason:
    It is an example of a graph that contains a \emph{3-let},
    i.e., a set of 3 nodes ($\{0,2,4\}$ in this case)
    sharing the same set of neighbors.
    In \cite[Corollary 2.9]{harrington2023kuramoto}
    Harrington, Schenck, and Stillman
    showed that the Kuramoto system derived from
    a homogeneous and uniform Kuramoto network
    based on a 2-connected graph that contains a 3-let
    must have a positive-dimensional zero set.
    \Cref{fig: K23 relative 1,fig: K23 relative 2}
    shows graphs obtained by
    adding edges to $K_{2,3}$, which no longer contain 3-lets.
    However, \Cref{thm: extending C4m} shows that
    there are still positive-dimensional $\CC^*$-zero sets.
    Indeed, the curve 
    \[
        \rowv{x}(\lambda) =
        \begin{bmatrix}
            -\lambda, -1, \lambda, \lambda
        \end{bmatrix}
    \]
    is contained in the $\CC^*$-zero sets of the
    Kuramoto systems derived from all three
    homogeneous and uniform networks
    despite the extra terms introduced by the added edges.
\end{example}

\begin{example}
    Similarly, starting with $C_8$,
    \Cref{thm: extending C4m} shows that the 1-dimensional curve
    \begin{equation}\label{equ: C8 solution}
        \rowv{x}(\lambda) = 
        \begin{bmatrix}
            -\lambda & -1 & \lambda & 1 & -\lambda & -1 & \lambda
        \end{bmatrix}
    \end{equation}
    can be embedded into the zero set
    of the Kuramoto system derived from
    the network in \Cref{fig: C8 + 1}.
    
    We end with the observation that
    for simplicity
    the conditions in \Cref{thm: extending C4m}
    is made too restrictive.
    For instance, even though the network in \Cref{fig: C8 + many edges}
    does not satisfy the condition in this proposition,
    the solution \eqref{equ: C8 solution}
    is still a valid curve solution
    to the derived Kuramoto system.
    This implies that 1-dimensional cycle solutions can be extended to a much broader class of networks.
\end{example}

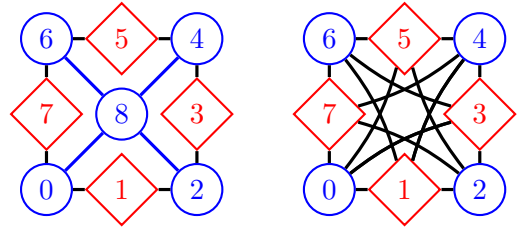
\begin{wrapfigure}{r}{0.45\textwidth}
    \begin{subfigure}[b]{0.22\textwidth}
        \centering
        \begin{tikzpicture}[
            Negative/.style = {draw=blue,text=blue,thick,circle},
            Positive/.style = {draw=red,text=red,thick,diamond},
            every edge/.style = {draw,very thick},
            scale=1.0
        ] 
            \node[Negative] (0) at ( 0, 0) {$0$}; 
            \node[Positive] (1) at ( 1, 0) {$1$}; 
            \node[Negative] (2) at ( 2, 0) {$2$}; 
            \node[Positive] (3) at ( 2, 1) {$3$}; 
            \node[Negative] (4) at ( 2, 2) {$4$}; 
            \node[Positive] (5) at ( 1, 2) {$5$}; 
            \node[Negative] (6) at ( 0, 2) {$6$}; 
            \node[Positive] (7) at ( 0, 1) {$7$}; 
            \node[Negative] (8) at ( 1, 1) {$8$}; 
            \path (0) edge (1);
            \path (1) edge (2);
            \path (2) edge (3);
            \path (3) edge (4);
            \path (4) edge (5);
            \path (5) edge (6);
            \path (6) edge (7);
            \path (7) edge (0);
            \path (8) edge[blue] (6);
            \path (8) edge[blue] (4);
            \path (8) edge[blue] (0);
            \path (8) edge[blue] (2);
        \end{tikzpicture} 
        \caption{Adding edges to $C_8$}
        \label{fig: C8 + 1}
    \end{subfigure}
    \begin{subfigure}[b]{0.22\textwidth}
        \centering
        \begin{tikzpicture}[
            Negative/.style = {draw=blue,text=blue,thick,circle},
            Positive/.style = {draw=red,text=red,thick,diamond},
            every edge/.style = {draw,very thick},
            scale=1.0
        ] 
            \node[Negative] (0) at ( 0, 0) {$0$}; 
            \node[Positive] (1) at ( 1, 0) {$1$}; 
            \node[Negative] (2) at ( 2, 0) {$2$}; 
            \node[Positive] (3) at ( 2, 1) {$3$}; 
            \node[Negative] (4) at ( 2, 2) {$4$}; 
            \node[Positive] (5) at ( 1, 2) {$5$}; 
            \node[Negative] (6) at ( 0, 2) {$6$}; 
            \node[Positive] (7) at ( 0, 1) {$7$}; 
            \path (0) edge (1);
            \path (1) edge (2);
            \path (2) edge (3);
            \path (3) edge (4);
            \path (4) edge (5);
            \path (5) edge (6);
            \path (6) edge (7);
            \path (7) edge (0);
            \path (0) edge[bend left=11]  (3);
            \path (0) edge[bend right=11] (5);
            \path (1) edge[bend left=11]  (4);
            \path (1) edge[bend right=11] (6);
            \path (2) edge[bend left=11]  (5);
            \path (2) edge[bend right=11] (7);
            \path (3) edge[bend left=11]  (6);
            \path (3) edge[bend right=11] (0);
            \path (4) edge[bend left=11]  (7);
            \path (4) edge[bend right=11] (1);
        \end{tikzpicture} 
        \caption{Adding more edges}
        \label{fig: C8 + many edges}
    \end{subfigure}
    \caption{
        Examples of homogeneous and uniform networks
        to which a cycle solution derived from $C_8$
        can be lifted into.
    }
    \label{fig: C8 relatives}
\end{wrapfigure}

\section{Concluding remarks}\label{sec: conclusion}

We studied the structure of the zero sets of the
Kuramoto equations and its algebraic counterpart.
By leveraging a recently discovered
three-way connection between the
graph-theoretic, convex-geometric, and tropical view points, 
we answered three key questions.

    For \Cref{q1},
    we showed that
    \emph{for generic natural frequencies and
        generic but symmetric coupling coefficients,
        the $\CC^*$-root count of the algebraic Kuramoto system
        coincides with the adjacency polytope bound,
        and as a corollary we showed that the algebraic Kuramoto system is Bernshtein-general.
    }
    The proof is constructive,
    and it produces a homotopy continuation method for computing all zeros.
    For \Cref{q2}, we provided a graph-theoretic description of the
    \emph{
        exceptional coupling coefficients
        for which the $\CC^*$-root count
     drops below the generic root count.}
    For \Cref{q3}, 
    we established sufficient conditions
    on many families of networks
    where \emph{there will be non-isolated real and complex zero sets} for these systems
    through explicit constructions.

While the analysis for the last two questions required some
topological restrictions,
it appears hopeful that
the approach taken here can be generalized to other networks.
We hope our work will spark interest in the
full analysis of the typical and atypical solutions to Kuramoto equations.

Finally, we speculate that the explicit connection between the root count
of the algebraic Kuramoto equations and the normalized volume of adjacency polytopes may also allow algebraic geometers
to directly contribute to the geometric study of adjacency polytopes.

\section*{Acknowledgments}

This project is inspired by a series of discussion
the first named author had with
Anton Leykin, Josephine Yu, and Yue Ren
between 2017 and 2018.
The first named author learned much about
the structure of adjacency polytopes (symmetric edge polytopes)
from Robert Davis,
Alessio D’Al\`i, Emanuele Delucchi, and Mateusz Micha\l{}ek.
The authors thank Paul Breiding, Paul Helminck, and Davide Sclosa
for their comments on an earlier version of this manuscript.

\appendix
\section{Notation}\label{app: notations}
Here, we list notation used in this paper
that may not be standard.

\begin{description}[itemsep=1.2ex]
\item[$\CC\{\tau\}$]
    The field of Puiseux series in $\tau$ with complex coefficients
    is denoted $\CC\{\tau\}$. Only convergent series
    representing germs of one-dimensional analytic varieties
    will be relevant.
    
\item[$\seccone(P,\Delta_\omega)$]
    The closed secondary cone of a regular subdivision $\Delta_\omega$
    of a polytope $P$.
    
\item[$\subdiv_{\omega}(P)$]
    The regular subdivision of a point configuration $P$
    induced by a lifting function $\omega : P \to \QQ$.
    
\item[$\radjp_G$]
    The point configuration associated with the adjacency polytope
    derived from a connected graph $G$.
    It is defined to be
    $\{ \pm ( \colv{e}_i - \colv{e}_j ) \mid \{i,j\} \in \edges(G) \} \cup \{ \colv{0} \}$.

\item[$\radjp_G^\omega$]
    The ``lifted'' version of the point configuration $\radjp_G$
    induced by a lifting function $\omega : \radjp_G \to \QQ$.
    It consists of the points $(\colv{p}, \omega(\colv{p})) \subset \RR^{n+1}$
    for all $\colv{p} \in \radjp_G$.
    
\item[$\aks_{(G,K,\colv{w})}, \aks_{(G,K)}, \aks_{G}$]
    The algebraic Kuramoto system derived from a Kuramoto network $(G,K,\colv{w})$.
    If the choice of the natural frequencies $\colv{w}$ is not relevant
    (or assumed to be generic),
    the notation $\aks_{(G,K)}$ is used.
    Similarly, if only graph topology is of relevance,
    we use $\aks_G$.
    
\item[$\raks_G$]
    The randomized algebraic Kuramoto system,
    i.e., $R \cdot \aks_G$ for a generic square matrix $R$.
    
\item[$(G,K,\colv{w}), (G,K)$]
    A Kuramoto network.
    $G$ is the underlying graph,
    $K = \{ k_{ij} \mid \{i,j\} \in \edges(G) \}$ with $k_{ij} = k_{ji}$
    encodes the coupling coefficients,
    and $\colv{w} = (w_0,\ldots,w_n)^\top$ contains the natural frequencies.

\item[$\init_{\rowv{v}}(f)$]
    For a Laurent polynomial $f$ in $x_1,\ldots,x_n$,
    the \emph{initial form} of $f$ with respect to a vector $\rowv{v}$
    is the Laurent polynomial
    $\init_{\rowv{v}}(f)(\rowv{x}) := \sum_{\colv{a} \in (S)_{\colv{v}}} c_{\colv{a}} \, \rowv{x}^{\colv{a}}$, 
    where $(S)_{\colv{v}}$ is the subset of $S$ on which 
    $\inner{ \rowv{v} }{ \cdot }$ is minimized.
    It is denoted $\init_{\rowv{v}}(f)$.
    For a system $\colv{f} = (f_1,\ldots,f_q)$ of Laurent polynomials,
    $\init_{\rowv{v}}(\colv{f}) = (\init_{\rowv{v}} (f_1),\ldots,\init_{\rowv{v}} (f_q))$.
    
\item[$Q(\dig{H}),\rinc(\dig{H})$]
For a digraph $\dig{H}$, its \emph{incidence matrix} $Q(\dig{H})$
is the matrix with columns $\colv{e}_i - \colv{e}_j$
such that $(i,j) \in \edges(\dig{H})$.
Since we set $\colv{e}_0 $ to be the zero vector,
the first row is all zeros.
Therefore, we instead consider the \emph{reduced incidence matrix},
$\rinc(\dig H)$, with $n = |\nodes(\dig H)| - 1$ rows,
which is the incidence matrix of $\dig{H}$ with the first row deleted.
    The ordering of the columns in both is arbitrary,
    but, when either matrix appears in the same context with other incidence vectors,
    a consistent ordering is assumed.
    Here, the adjective ``reduced'' emphasizes the fact that
    the labels for the nodes in the graph are $0,1,\ldots$,
    and therefore, for a digraph $\dig{H}$ of $n+1$ nodes,
    $\rinc(\dig{H})$ only has $n$ rows.

\item[$\operatorname{MV}(P_1,\ldots,P_n)$]
    Given $n$ convex polytopes $P_1,\ldots,P_n \subset \RR^n$,
    the \emph{mixed volume} of $P_1,\ldots P_n$
    is the coefficient of the monomial $\lambda_1 \cdots \lambda_n$
    in the homogeneous polynomial
    $\operatorname{vol}_n(\lambda_1 P_1 + \ldots + \lambda_n P_n)$
    where $P + Q = \{ p + q \ : \ p \in P, \ q \in Q \}$
    denotes the Minkowski sum and $\operatorname{vol}_n$
    is the standard $n-$dimensional Euclidean volume.
    
\item[$\neighbors_G(i),\neighbors_{\dig G}^+(i),\neighbors_{\dig G}^-(i)$]
    For a graph $G$ and a node $i$ of $G$,
    $\neighbors_G(i)$ is the set of nodes that are adjacent to $i$.
    Similarly, the other two are the adjacent nodes
    through outgoing and incoming arcs in a digraph, respectively.
    
\item[$\nvol$]
    The normalized volume of a set in $\RR^n$,
    which is defined to be $n!$ times the Euclidean volume.
    The usage is restricted to convex polytopes here,
    and we adopt the convention that $\nvol(X) = 0$,
    if $X$ is not full dimensional.
\end{description}
    
\section{Elementary lemmata}\label{sec: lemmas}

We restate \Cref{lem: simplex tree}
and provide an elementary proof.

\simplextree*

\begin{proof}
    Let $(\colv{\alpha},1)$ be the upward pointing inner normal
    that defines the cell $\Delta$ as a projection of a lower facet of $\radjp_G^{w}$.
    Suppose $\dig{G}_\Delta$ has a simple directed cycle $i_1 \to \cdots \to i_m \to i_1$.
    Then
    \[
        \inner{ \colv{\alpha} }{ \colv{e}_{i_k} - \colv{e}_{i_{k+1}} } + 1 + \delta_{i_k,i_{k+1}} = 0
        \quad\text{for } k = 1,\ldots,m,
    \]
    where $i_{m+1} = i_1$.
    Summing these $m$ equations produces
    $
        m + \sum_{k=1}^m \delta_{i_k,i_{k+1}} = 0,
    $
    which is not possible under the assumption that $\delta_{ij}$
    are sufficiently close to 0.
    So $\dig{G}_\Delta$ must be acyclic.
    
    Moreover, $\dim(\Delta) = n$, by assumption.
    That is, $\{ \colv{e}_i - \colv{e}_j \mid (i,j) \in \dig{G}_\Delta \}$
    is must span $\RR^n$ as a set of vectors
    (since $\colv{0} \in \Delta$, by construction).
    Therefore, for every $i \in \{0,\ldots,n\}$,
    either $\colv{e}_i - \colv{e}_j \in \Delta$
    or     $\colv{e}_j - \colv{e}_i \in \Delta$
    for some $j \in \{0,\ldots,n\} \setminus \{i\}$.
    That is, $G_\Delta$ must be spanning.
\end{proof}

Now we restate \Cref{lem: componentwise facial system}
and provide a short proof.

\componentwise*

\begin{proof}
    Let $V_i = \nodes(\dig{H}_i)$ for $i=1,\ldots,\ell$ with $0 \in V_1$.
    We arrange the coordinates of $\ZZ^n$
    according to the grouping $V_1,\ldots,V_\ell$
    so that an inner normal vector $\rowv{\alpha} \in \RR^n$
    that defines the face $F$ of $\radjp_G$ is written as
    $
        \rowv{\alpha} = 
        \begin{bmatrix}\,
            \rowv{\alpha}_1 &
            \cdots &
            \rowv{\alpha}_\ell \,
        \end{bmatrix}
    $
    where $\rowv{\alpha}_i \in \RR^{|V_i|}$
    corresponds to nodes in $V_i$.
    Then for $i=1,\ldots,\ell$,
    \[
        \rowv{v}_i :=
        \begin{bmatrix}
            \,
            \rowv{0}_{ |V_1| + \cdots + |V_{i-1}| } &
            \rowv{\alpha}_i &
            \rowv{0}_{|V_{i+1}| + \cdots + |V_\ell|} \,
        \end{bmatrix}
        \in \RR^n
    \]
    defines a face $F_i$ of $\radjp_G$
    such that $\dig{H}_i = \dig{G}_{F_i}$.
    
    By grouping the rows and columns of $\rinc(\dig{G}_F)$
    according to the nodes and arcs in $H_1,\ldots,H_\ell$,
    $\rinc(\dig{G}_F)$ has the block structure
    \[
        \rinc(\dig{G}_F) =
        \left[
        \begin{smallmatrix}
            \rinc(\dig{H}_1) & &  \\
            & \ddots & \\
            & & \rinc(\dig{H}_\ell)
        \end{smallmatrix}
        \right].
    \]
    Therefore, the cycle form of $\init_F(\raks_G) = \colv{0}$,
    i.e., 
    $
        \rinc(\dig{G}_F)
        \left( 
            \rowv{x}^{ \rinc(\dig{G}_F) }
            \circ
            \rowv{a}(\dig{G}_F)
        \right)^\top
        =
        \colv{0},
    $
    is equivalent to
    \[
        \rinc(\dig{H}_i)
        \left( 
            \rowv{x}_i^{ \rinc(\dig{H}_i) }
            \circ
            \rowv{a}(\dig{H}_i)
        \right)^\top
        =
        \colv{0}
        \quad\text{for each } i = 1,\ldots,\ell,
    \]
    where $\rowv{x}_i$ contain the coordinates corresponding to nodes in $\dig{H}_i$.
    This is equivalent to
    $\init_{F_i}(\raks_G)(\rowv{x}) = \colv{0}$ for each $i=1,\ldots,\ell$,
    by \Cref{lem: face system}.
\end{proof}

Similarly, we restate \Cref{lem: smaller sub graph}
and provide a simple proof.

\smallergraph*

\begin{proof}
    Order the vertices of $\dig{G}_F$ so that 
    \[ \rinc(\dig{G}_F) = \begin{bmatrix}
        \rinc(\dig{G}_{F'}) \\ 0
    \end{bmatrix}, \]
    Write $\rowv{x} = [\rowv{x}' \ \rowv{y}]$ where
    $\rowv{x}'$ is the vector of variables appearing in
    $\init_{F'}(\raks_{G'})$, and
    let $e_1,\ldots,e_d$ be the arcs in $\dig{G}_F$
    that are not in $\dig{G}_{F'}$.
    Then by \Cref{lem: face system} we want to show that 
    \[ \rinc(\dig{G}_F)
        \left( 
            \rowv{x}^{ \rinc(\dig{G}_F) }
            \circ
            \rowv{a}(\dig{G}_F)
        \right)^\top
        =
        \colv{0} \quad \Leftrightarrow \quad \rinc(\dig{G}_{F'})
        \left( 
            \rowv{x'}^{ \rinc(\dig{G}_{F'}) }
            \circ
            \rowv{a}(\dig{G}_{F'})
        \right)^\top
        =
        \colv{0}. \]
        Observe that 
        \begin{align*}
            \rinc(\dig{G}_F)
        \left( 
            \rowv{x}^{ \rinc(\dig{G}_F) }
            \circ
            \rowv{a}(\dig{G}_F)
        \right)^\top &= \begin{bmatrix}
        \rinc(\dig{G}_{F'}) \\ 0
    \end{bmatrix} \left( 
            \rowv{[x' \ y]}^{ \rinc(\dig{G}_F) }
            \circ
            [\rowv{a}(\dig{G}_{F'}) \ a_{e_1} \ \ldots a_{e_d} ]
        \right)^\top \\
        &= \rinc(\dig{G}_{F'})
        \left( 
            \rowv{x'}^{ \rinc(\dig{G}_{F'}) }
            \circ
            \rowv{a}(\dig{G}_{F'})
        \right)^\top,
        \end{align*}
        proving the statement.
\end{proof}

We recall \Cref{cor: tree solution condition}
and provide an elementary proof.

\TreeCondition*

\begin{proof}
    We will repeat some of the steps in the proof of
    \Cref{lem: initial soln must be tree soln} for reference.
    
    Suppose we have $\lambda_1,\ldots,\lambda_d$
    for which the equations above hold.
    Then
    \[
        \left(
            \sum_{k=1}^d \lambda_k \rowv{\tau}_k
        \right)^{\colv{\tau}_i}
        \cdot
        \rowv{a}(\dig{T})^{-{\colv{\tau}_i}}
        \cdot \alpha_i
        =
        - \lambda_i
        \quad\text{for } i=1,\ldots,d.
    \]
    Let $\rowv{x}$ be the corresponding tree solution,
    i.e.,
    \[
        \rowv{x} =
        \rowv{x}_{\dig{T}}(\lambda_1,\ldots,\lambda_d) =
        \left[
            \left(
                \sum_{k=1}^d \lambda_k \rowv{\tau}_k
            \right)
            \circ \rowv{a}(\dig{T})^{-I}
        \right]^{\rinc(\dig{T})^{-1}}.
    \]
    Then
    \[
        \left(
            \rowv{x}^{ \rinc(\dig{H}) }
            \circ
            \rowv{a}(\dig{H})
        \right)^\top
        =
        \begin{bmatrix}
            \left( \rowv{x}^{\rinc(\dig{T}) } \circ \rowv{a}(\dig{T}) \right)^\top
            \\
            \rowv{x}^{\colv{v}_1} \cdot \alpha_1 \\
            \vdots \\
            \rowv{x}^{\colv{v}_d} \cdot \alpha_d \\
        \end{bmatrix}
        =
        \begin{bmatrix}
            \left(
                \sum_{k=1}^d \lambda_k \rowv{\tau}_k
            \right)^\top
            \\
            \left(
                \sum_{k=1}^d \lambda_k \rowv{\tau}_k
            \right)^{\rowv{\tau}_1}
            \rowv{a}(\dig{T})^{-\rowv{\tau}_1}
            \alpha_1
            \\
            \vdots
            \\
            \left(
                \sum_{k=1}^d \lambda_k \rowv{\tau}_k
            \right)^{\rowv{\tau}_d}
            \rowv{a}(\dig{T})^{-\rowv{\tau}_d}
            \alpha_d
        \end{bmatrix}
        =
        \begin{bmatrix}
            \left(
                \sum_{k=1}^d \lambda_k \rowv{\tau}_k
            \right)^\top
            \\
            -\lambda_1
            \\ \vdots \\
            -\lambda_d
        \end{bmatrix}
    \]
    which is in the null space of $\rinc(\dig{H})$.
    Thus,
    $\rinc(\dig{H}) (
        \rowv{x}^{\rinc(\dig{H})}
        \circ 
        \rowv{a}(\dig{H})
    )^\top = \colv{0}$.
    That is, $\rowv{x}$ is a $\CC^*$-solution
    to the facial system $\init_F(\raks_G)$.

    Conversely, suppose $\rowv{x}$ is a $\CC^*$-zero
    of the facial system $\init_F(\raks_G)$,
    then by \Cref{lem: initial soln must be tree soln},
    it is given by a tree solution. I.e.,
    $\rowv{x} = \rowv{x}_{\dig{T}}(\lambda_1,\ldots,\lambda_d)$
    for some $\lambda_1,\ldots,\lambda_d$.
    From the equation above, 
    \[
        \left(
            \rowv{x}^{ \rinc(\dig{H}) }
            \circ
            \rowv{a}(\dig{H})
        \right)^\top
        =
        \begin{bmatrix}
            \left(
                \sum_{k=1}^d \lambda_k \rowv{\tau}_k
            \right)^\top
            \\
            \left(
                \sum_{k=1}^d \lambda_k \rowv{\tau}_k
            \right)^{\rowv{\tau}_1}
            \rowv{a}(\dig{T})^{-\rowv{\tau}_1}
            \alpha_1
            \\
            \vdots
            \\
            \left(
                \sum_{k=1}^d \lambda_k \rowv{\tau}_k
            \right)^{\rowv{\tau}_d}
            \rowv{a}(\dig{T})^{-\rowv{\tau}_d}
            \alpha_d
        \end{bmatrix}
    \]
    must be in the null space of $\rinc(\dig{H})$.
    Since the vectors
    $
        (\rowv{\tau}_1,-\rowv{e}_1)^\top,\ldots,
        (\rowv{\tau}_d,-\rowv{e}_d)^\top
    $
    form a basis for this null space.
    Therefore,
    \[
        \left(
            \sum_{k=1}^d \lambda_k \rowv{\tau}_k
        \right)^{\colv{\tau}_i}
        \cdot
        \rowv{a}(\dig{T})^{-{\colv{\tau}_i}}
        \cdot \alpha_i
        =
        - \lambda_i
        \quad\text{for } i = 1,\ldots,d,
    \]
    which is equivalent to the conclusion.
\end{proof}

\bibliographystyle{siamplain}
\bibliography{ref}

\end{document}